\DeclareMathOperator*{\argmin}{\textsf{Argmin}}
\providecommand{\tabularnewline}{\\}
\newcommand{\vertiii}[1]{{\left\vert\kern-0.25ex\left\vert\kern-0.25ex\left\vert #1
\right\vert\kern-0.25ex\right\vert\kern-0.25ex\right\vert}}
\renewcommand{\algocf@captiontext}[2]{#1\algocf@typo. \AlCapFnt{}#2} 
\def\@algocf@capt@plain{top}
\renewcommand{\algocf@makecaption}[2]{%
  \addtolength{\hsize}{\algomargin}%
  \sbox\@tempboxa{\algocf@captiontext{#1}{#2}}%
  \ifdim\wd\@tempboxa >\hsize
    \hskip .5\algomargin%
    \parbox[t]{\hsize}{\algocf@captiontext{#1}{#2}}
  \else%
    \global\@minipagefalse%
    \hbox to\hsize{\box\@tempboxa}
  \fi%
  \addtolength{\hsize}{-\algomargin}%
}
\newtheorem{theorem}{Theorem}[section]
\newtheorem*{theorem*}{Theorem}
\newtheorem{assumption}{Assumption}
\newtheorem{remark}{Remark}
\newtheorem{definition}{Definition}[section]
\newtheorem{proposition}{Proposition}
\newtheorem{lemma}{Lemma}
\newtheorem*{lemma*}{Lemma}
\newtheorem*{assumption*}{Assumption}
\def\Bka{{\it Biometrika}}
\begin{document}

\title{Multiple Changepoint Estimation in High-Dimensional Gaussian Graphical Models}
\author{A. Gibberd and S. Roy} 

\maketitle
\begin{abstract}
We consider the consistency properties of a regularised estimator
for the simultaneous identification of both changepoints and graphical
dependency structure in multivariate time-series. Traditionally, estimation
of Gaussian Graphical Models (GGM) is performed in an i.i.d setting.
More recently, such models have been extended to allow for changes
in the distribution, but only where changepoints are known a-priori.
In this work, we study the Group-Fused Graphical Lasso (GFGL) which
penalises partial-correlations with an  L1 penalty while simultaneously
inducing block-wise smoothness over time to detect multiple changepoints.
We present a proof of consistency for the estimator, both in terms of changepoints, and the structure of the graphical models in each segment. 
\end{abstract}

\section{Introduction}

\subsection{Motivation}

Many modern day datasets exhibit multivariate dependance structure
that can be modelled using networks or graphs. For example, in social
sciences, biomedical studies, financial applications etc. the association
of datasets with latent network structures are ubiquitous. Many of
these datsets are time-varying in nature and that motivates the modelling
of dynamic networks. A network is usually characterised by a graph
$G$ with vertex set $V$ (the collection of nodes) and edge set $E$
(the collection of edges). We denote $G=(V,E)$. For example, in a
biological application nodes may denote a set of genes and the edges
may be the interactions among the genes. Alternatively, in neuroscience,
the nodes may represent observed processes in different regions of
the brain, and the edges represent functional connectivity or connectome.
In both situations, we may observe activity at nodes over a period
of time\textemdash{} the challenge is to infer the dependency network
and how this changes over time. 

In this paper we consider a particular type of dynamic network model
where the underlying dependency structure evolves in a piecewise fashion.
We desire to estimate multiple change-points where the network structure
changes, as well as the structures themselves. To this end, the task
is formulated as a joint optimization problem such that change-point
estimation and structure recovery can be performed simultaneously.
The particular class of networks we aim to estimate are encoded via
a multivariate Gaussian that has a piecewise constant precision matrix
over certain blocks of time. Specifically, we assume observations
$X^{(t)}=(X_{1}^{(t)},\ldots,X_{p}^{(t)})$ are drawn from the following
model

\begin{equation}
X_{t}\sim\mathcal{N}(0,\Sigma^{(k)}_0)\quad,\;t\in \{\tau_{k-1},\ldots,\tau_k\}\;,\label{eq:GFGL_model}
\end{equation}
where $t=1,\ldots,T$ indexes the time of the observed data-point, and
$k=1,\ldots,K+1$ indexes blocks seperated by changepoints $\{\tau_{k}\}_{k=1}^{K}$, at which
point the covariance matrix $\Sigma^{(k)}_0$ changes. The task is to assess, how well,
or indeed if, we can recover both the changepoint positions $\tau_{k}$
and the correct precision matrices $\Theta^{(k)}_0:=(\Sigma^{(k)}_0)^{-1}$. 

\subsection{Literature Review}

Although the topic of change-point estimation is well represented
in statistics (see \citet{Bai1997}, \citet{Hinkley1970}, \citet{Loader1996},
\citet{Lan2009}, \citet{Muller1992}, \citet{Raimondo1998} and references
therein), its application in high dimensional graphical models is
relatively unexplored. We will first review the literature on change-point detection before moving onto graphical model estimation and the intersection of these methodologies.

%
%
One of the
most popular methods of change-point detection is binary segmentation
\citep{Fryzlewicz2012,Cho2015}. The method hinges on dividing the
entire data into two segments based on a discrepancy measure and relating
the procedure on subsequent segments until there are no change-point
available. Usually a cumulative summation type test-statistic \citep{Page1954} is
used to compute the discrepancy based on possible two segments of
the data. Further approaches for multiple change-point
estimation in multivariate time series (not necessarily high dimensional)
include: SLEX (Smooth Localized Complex Exponentials), a complexity
penalized optimization for time series segmentation \citep{Ombao2005};
approaches which utilise a penalised gaussian likelihood via dynamic
programming \citep{Lavielle2006,Angelosante2011}; or penalized regression
utilising the group lasso \citep{Bleakley_2011}. More recently, work
on high dimensional time series segmentation includes: sparsified
binary segmentation via thresholding CUSUM statistics \citep{Cho2015};
change-point estimation for high dimensional time series with missing
data via subspace tracking \citep{Xie2013}, and projection based
approaches \citep{Wang2017}. Some recent works on high dimensional
regression with change-points are also worth mentioning here. For
instance \citet{Lee2016} consider high dimensional regression with
a possible change-point due to a covariate threshold and develop a
lasso based estimator for the regression coefficients as well as the
threshold parameter. \citet{Leonardi2016} extend this to multiple
change-point detection in high dimensional regression, the resultant
estimator can be used in conjunction with dynamic programming or in
an approximate setting via binary segmentation. The theoretical guarantees
for the two methods appear to be similar and estimate both the number
and locations of change-points, as well as the parameters in the corresponding
segments.

Moving on to graph estimation, in the static i.i.d. case there are two principle approaches to estimating
conditional independence graphs. Firstly, as suggested by \citet{Annals2000},
one may adopt a neighbourhood or \emph{local} selection approach where
edges are estimated at a nodewise level, an estimate for the network
is then constructed by iterating across nodes. Alternatively, one
may consider joint estimation of the edge structure across all nodes
in a \emph{global} fashion. In the i.i.d. setting a popular method
to achieved this is via the Graphical lasso \citep{Banerjee_2007},
or explicitly constrained precision matrix estimation schemes such
as CLIME \citep{Cai2011}. 

In the dynamic setting, one could consider extending static neighbourhood
selection methods, for instance utilising the methods of \citet{Lee2016,Leonardi2016}
to estimate a graph where each node may exhibit multiple change points.
The work of \citet{Roy2016} considers a neighbourhood selection approach
for networks in the presence of a single changepoint, while \citet{Kolar2012}
consider using the fused lasso \citep{Harchaoui2010} to estimate
multiple changepoints at the node level. In the global estimation
setting, \citet{Angelosante2011} proposed to combine the graphical
lasso with dynamic programming to estimate changepoints and graph
structures. Alternatively, one may consider smoothing variation at
an individual edge level via an $\ell_{1}$ penalty \citep{Conference2014,Monti2014},
or across multiple edges via a group-fused $\ell_{2,1}$ penalty \citep{Gibberd2017}. 

\subsection{Paper Contribution}

Unlike previous literature in high dimensional time series segmentation
or high dimensional regression settings with possible change-points,
our framework is in the context of a graphical model that changes
its underlying structure with time in a piecewise manner. We therefore
desire to detect jointly, both the change-points and the parameters
specifying the underlying network structure. To achieve such estimation,
we construct an M-estimator which jointly penalises sparsity in the
graph structure while additionally smoothing the structure over time.

In this paper we focus on describing multiple change-point impacting
the global structural change in the underlying Gaussian graphical
model. Whether a global, or local approach is appropriate will depend
on the application. For example, in certain biological applications
(proteins, biomolecules etc.) node-wise changes would be more appropriate,
whereas for genetic interaction networks, social interaction or brain
networks global structural changes may be of more interest. Additionally,
in many situations it is not trivial how we combine edge estimates
when performing a neighbourhood selection approach. Because edges
are estimated locally they may be inconsistent across nodes which
can make it difficult to interpret global changes.

To avoid these problems, we opt to perform global estimation and assume
the presence of changepoints which impact many of the edges together.
Specifically, this paper deals with analysing the Group-Fused Graphical
lasso (GFGL) estimator first proposed in \citet{Gibberd2017}. In
previous work it was demonstrated that empirically GFGL can detect
both changepoints and graphical structure while operating in relatively
high-dimensional settings. However, until now, the theoretical consistency
properties of the estimator have remained unknown. In this paper,
we derive rates for the consistent recovery of both changepoints and model structure via upper bounds on the errors: $\max_{k}|\hat{\tau}^{(k)}-\tau_{0}^{(k)}|$ and $\|\hat{\Theta}^{(k)}-\Theta_{0}^{(k)}\|_{\infty}$
under sampling from the model in Eq. \ref{eq:GFGL_model}. We discuss the obtained convergence rates with those already
described in the literature, such as changepoint
recovery rates with fused neighbourhood selection \citep{Kolar2012},
and node-wise binary-segmentation/dynamic programming routines \citep{Leonardi2016}. Our theoretical results demonstrate that we can asymptotically recover precision matrices, even in the presence of a-priori unknown changepoints. We demonstrate that error bounds of a similar form to those presented in the static i.i.d. graphical lasso case \citet{Ravikumar2011} can be obtained for sufficient $T$ and appropriate smoothing regularisation. Using such bounds, we attempt to quantify the price one pays for allowing non-homogeneous sampling and the presence of changepoints. To this end, our results indicate the efficiency of precision matrix estimation with GFGL may be limited for shorter sequences due to the influence and bias imposed by our assumed smoothing regularisers.

\subsection{Notation}

Throughout the paper we will use a kind of notational overloading
whereby $\Theta^{(k)}\in\mathbb{R}^{p\times p}$ refers to a \emph{block}
indexed precision matrix and $\Theta^{(t)}$ refers to a\emph{ time}-indexed
one, for $t=1,\ldots,T$. In general, each block $k=1,\ldots,K+1$ will contain multiple
time-points. It is expected that many $\Theta^{(t)}$ will be the
same, however, only a few $\Theta^{(k)}$ will be similar, a diagrammatic
overview of the notation can be found in Figure \ref{fig:tensor_notation}.

Vector norms $\ell_a$ are denoted $\|x\|_a$, similarly the application of vector norms to a vectorised matrix is denoted $\|X\|_a$. For instance, the maximum element of a matrix is denoted $\|X\|_\infty=\max_{i,j}|X_{ij}|$. Where the Frobenius norm is used, this is denoted $\|X\|_F$. Additionally, we utilise the operator norm $\vertiii{X}_2$ which denotes the largest singular values of $X$, and the $\ell_{\infty,\infty}$ norm $\vertiii{X}_{\infty}:=\max_{i}\sum_{j}|X_{ij}|$.

\begin{figure}
\begin{centering}
\includegraphics[width=1\columnwidth]{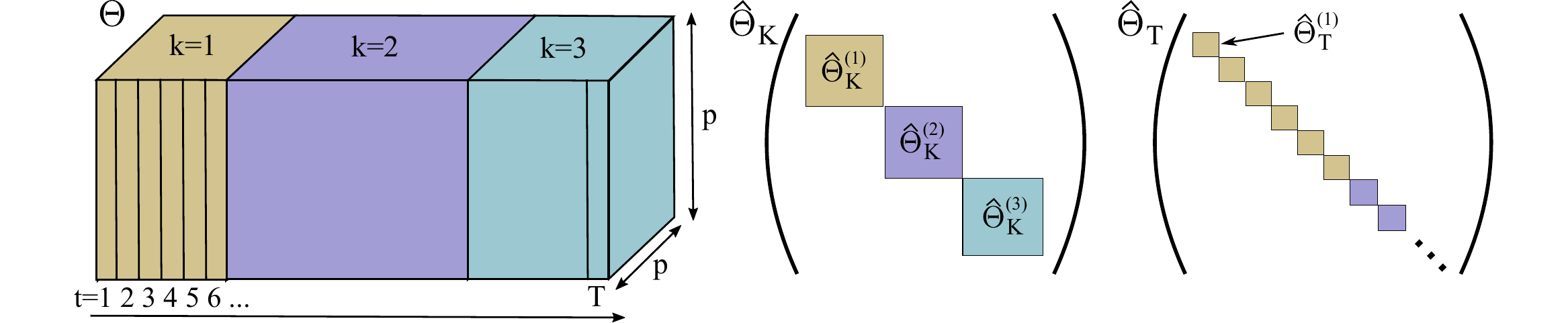}
\par\end{centering}
\caption{Diagramatic representation of matrix indexing and notation. \label{fig:tensor_notation}}

\end{figure}

\section{Estimator Formulation and Computation}

\subsection{The Group-Fused Graphical Lasso}

\label{sec:GFGL_sec2}

To simultaneously estimate multivariate Gaussian model structure alongside
changepoints, we propose to use the GFGL estimator first examined
in \citet{Gibberd2017}. This takes multivariate observations $x^{(t)}\in\mathbb{R}^{p}$
for $t=1,\ldots,T$ and estimates a set of $T$ precision matrices
represented in a block-diagonal model space $\hat{\Theta}_{T}\in\tilde{\mathbb{R}}^{Tp\times Tp}$.
In the following definition, we reference each diagonal matrix as
$\hat{\Theta}^{(t)}$ with the full model being described by $\{\hat{\Theta}^{(t)}\}_{t=1}^{T}$.

\begin{definition}{Group-Fused Graphical Lasso estimator}

Let $\hat{S}^{(t)}:=x^{(t)}(x^{(t)})^{\top}$ be the local empirical
covariance estimator. The GFGL estimator is defined as the M-estimator

\begin{equation}
\{\hat{\Theta}^{(t)}\}_{t=1}^{T}=\argmin_{\{U^{(t)}\succeq0\}_{t=1}^{T}}\left\{ l_{T}((U^{(t)},x^{(t)})_{t=1}^{T})+r_{T}((U^{(t)})_{t=1}^{T})\right\} \label{eq:GFGL_cost_function}
\end{equation}
where the loss-function and regulariser are respectively defined as

\begin{align}
l_{T}((U^{(t)},\hat{S}^{(t)})_{t=1}^{T}) & :=\sum_{t=1}^{T}\left\{ -\log\det(U^{(t)})+\mathrm{trace}(\hat{S}^{(t)}U^{(t)})\right\} \;,\label{eq:GFGL_loss_function}\\
r_{T}((U^{(t)})_{t=1}^{T}) & :=\lambda_1 \sum_{t=1}^{T}\sum_{i\ne j}|U_{i,j}^{(t)}|+\lambda_2 \sum_{t=2}^{T} \{\sum_{i,j=1}^{p}(U_{i,j}^{(t)}-U_{i,j}^{(t-1)})^{2}\}^{1/2} \;.\label{eq:GFGL_regulariser}
\end{align}
\end{definition}

Once the precision matrices have been estimated, changepoints are
defined as the time-points:
\[
\left\{ \hat{\tau}_{1},\ldots,\hat{\tau}_{\hat{K}}\right\} :=\left\{ t\:|\hat{\Theta}^{(t)}-\hat{\Theta}^{(t-1)}\ne0\right\} \;.
\]
While changepoints in the traditional sense are defined above, it
is convenient later on to consider the block separators $\hat{\mathcal{T}}:=\{1\}\cup\{\hat{\tau}_{1},\ldots,\hat{\tau}_{\hat{K}}\}\cup\{T+1\}$,
the added entries are denoted $\tau_{0}$ and $\tau_{\hat{K}+1}$.

\subsection{Comparison with Alternative Estimators}

The discussion in this paper studies the impact of implementing \emph{graph-wise}
smoothness constraints on precision matrix estimation through penalising
edge variation with the group Frobenius norm. This contrasts with
previous research that has predominently focussed on enforcing smoothness
constraints at an edge-by-edge level \citep{Kolar2011,Kolar2012,Monti2014,Danaher2013,Saegusa2016}.
For instance, one may replace the group-fused penalty in (\ref{eq:GFGL_regulariser})
with a linearly seperable norm such as the $\ell_{1}$ norm, i.e.
$\lambda_2 \sum_{t=1}^{T}\|\Theta^{(t)}-\Theta^{(t-1)}\|_{1}$. In the case
of $\ell_{1}$ fusing one may interpret the estimator as a convex
relaxation of the total number of jumps across all edges, we may define
the number of changepoints as $K_{0}:=(1/2)\sum_{t=2}^{T-1}\|\Theta_{\backslash ii}^{(t)}-\Theta_{\backslash ii}^{(t-1)}\|_{0}$,
where $\|\cdot\|_{0}$ counts the number of non-zero differences. 

Alternatively, the grouped nature of GFGL assumes that the graph structure
underlying a process changes in some sense systematically. Rather
than edges changing individually, we presume that many of the edges
change dependency structure at the same time. In this case, we consider
smoothness at the graph level by counting the number of non-zero differences
in a non-seperable norm, such that $K_{0}:=|\{\|\Theta_{\backslash ii}^{(t)}-\Theta_{\backslash ii}^{(t-1)}\|\ne0\}|$.
In this case, the estimator assumes that many of the active edges
may change at the same time and that changes in graphical structure
are therefore syncronised. 

\subsection{Numerical Optimisation}

Due to both the convexity and linear composition properties of the
cost function (\ref{eq:GFGL_cost_function}), optimisation may be
performed relatively efficiently. Although not the main focus of this
work, we here suggest an \emph{alternating directed method of moments
(ADMM)} algorithm which is suitable for the task of minimising the
GFGL cost. The algorithm presented here is a modified version of that
found in the original GFGL paper \citep{Gibberd2017}, a similar independently
developed algorithm that is applicable to this problem can be found
in the work of \citet{Hallac2017}.

Specifically, we propose to minimise an augmented Lagrangian for (\ref{eq:GFGL_cost_function})
where the variables are grouped into sets corresponding to; primal
$U$, auxiliary $V$, transformations of auxiliary variables $W$,
and dual variables $\{\mathcal{V}_{1},\mathcal{V}_{2},\mathcal{W}\}$:
\begin{align*}
 & \mathcal{L}(U,V_{1},V_{2},W,\mathcal{V}_{1},\mathcal{V}_{2},\mathcal{W}):=\sum_{t=1}^{T}\bigg(-\log\det(U^{(t)})+\mathrm{tr}(U^{(t)}S^{(t)})\bigg)\ldots\\
 & +\lambda_1 \sum_{t=1}^{T}\|[V_{1}^{(t)}]_{\backslash ii}\|_{1}+\lambda_2 \sum_{t=2}^{T}\|W^{(t)}\|_{F}+\frac{\gamma_{V_{1}}}{2}\bigg(\sum_{t=1}^{T}\|U^{(t)}-V_{1}^{(t)}+\mathcal{V}_{1}^{(t)}\|_{F}^{2}-\|\mathcal{V}_{1}^{(t)}\|_{F}^{2}\bigg)\ldots\\
 & +\frac{\gamma_{V_{2}}}{2}\bigg(\sum_{t=1}^{T-1}\|U^{(t)}-V_{2}^{(t)}+\mathcal{V}_{2}^{(t)}\|_{F}^{2}-\|\mathcal{V}_{2}^{(t)}\|_{F}^{2}\bigg)\ldots\\
 & +\frac{\gamma_{W}}{2}\bigg(\sum_{t=2}^{T}\|(V_{1}^{(t)}-V_{2}^{(t-1)})-W^{(t)}+\mathcal{W}^{(t)}\|_{F}^{2}-\|\mathcal{W}^{(t)}\|_{F}^{2}\bigg)\;.
\end{align*}
The quantities $\gamma_{V_{1}},\gamma_{V_{2}},\gamma_{W}$ are algorithmic
tuning parameters which assign weight to deviation between the auxilary
and primal parameters, for instance via terms like $(\gamma_{V_{1}}/2)\|U^{(t)}-V_{1}^{(t)}\|_{F}$.
In practice, the algorithm appears numerically stable for equal weighting
$\gamma_{V_{1}}=\gamma_{V_{2}}=\gamma_{W}=1$. The ADMM algorithm
proceeds to perform dual ascent iteratively (and sequentially) minimising
$\mathcal{L}\{\cdot\}$ for $U,V_{1},V_{2},W$ and then updating the
dual variables. Pseudocode for these operations are given in Algorithm
\ref{alg:ADMM_pseudo}, a more detailed schema can be found in the
Supplementary Material. 

\begin{algorithm}[H]
\SetKwInput{KwInit}{Init}
\KwIn{Data and regulariser parameters: $X, \lambda_1, \lambda_2 $, convergence thresholds: $t_p,\: t_d$}
\KwOut{Precision matrices: $\{ \hat{\Theta}^{(t)} \}_{t=1}^T$ and  changepoints: $  \{ \hat{\tau}_1,\ldots ,\hat{\tau}_{\hat{K}} \}$}
\KwInit{$U^{(t)}=V^{(t)}=W^{(t)}=I\in\mathbb{R}^{p\times p}$}

\While{$\epsilon_p > t_p$, $\epsilon_d > t_d$}{
Update Primal Estimate:
$U^{(t)}_{n+1} = \arg\min_{U}\mathcal{L}\{\}$ - Eigen-decomposition\;
Update Auxilary Estimates:\\
$V_{1;n+1}^{(t)} = \arg\min_{V_1}\mathcal{L}\{\cdot\}$ - Soft-threshold\;
$V_{2;n+1}^{(t)}= \arg\min_{V_2}\mathcal{L}\{\cdot\}$ - Matrix multiplication\;
$W^{t}_{n+1} = \arg\min_{W}\mathcal{L}\{\cdot\}$ - Group soft-threshold differences\;
Update Dual variables: c.f. $\mathcal{V}_{1;n+1}^{(t)}=\mathcal{V}_{1;n}^{(t)}+U_{n+1}^{t}-V_{1;n}^{(t)}$ \;
}
\KwRet{$\{ \hat{\Theta}^{(t)} \}$}

\caption{ADMM algorithm for the minimisation of the GFGL cost function. \label{alg:ADMM_pseudo}}
\end{algorithm}

In particular, due to seperability of the ADMM updates, this algorithm
can be trivially parallelised across time-points and obtains an iteration
time complexity of order $\mathcal{O}(p^{3}T)$ in terms of the number
of data-points and dimension $p$. This contrasts favourably with
dynamic programming, which has a naiive cost $\mathcal{O}(T^{2})$
\citep{Angelosante2011}, or $\mathcal{O}(T\log(T))$ in the pruned
case \citep{Killick2012}, the binary segmentation scheme of \citet{Leonardi2016}
also obtains this latter rate. The cubic cost with regards to dimension
is due to the eigen-decomposition step. Potentially, this may be reduced
if some form of covariance pre-screeneing can be achieved, for instance
as suggested in \citet{Danaher2013} it may be possible to break the
$p$ dimensional problem into a set of independent $p_{1},\ldots,p_{M}$
dimensional problems where $\sum_{m=1}^{M}p_{m}=p$ . We leave such
investigations as directions for further work.

\subsection{Optimality conditions}

To assess the statistical properties of the GFGL estimator, we first need to derive a set of conditions which all minimisers of the cost function obey. We connect this condition to the sampling of the model under (\ref{eq:GFGL_model}) by the quantity $\Psi^{(t)}:=\hat{S}^{(t)}-\Sigma^{(t)}$
which represents the difference between the ground-truth covariance and the empirical
covariance matrix $X^{(t)}(X^{(t)})^{\top}$. 

Since we are dealing with a fused regulariser, it is convinient to introduce
a matrix $\Gamma^{(t)}$ corresponding to the differences in precision matrices. For the first step let $\Gamma^{(1)}=\Theta^{(1)}$, then for $t=2,\ldots,T$, let $\Gamma^{(t)}=\Theta^{(t)}-\Theta^{(t-1)}$. The sub-gradients for the non-smooth portion of the cost function are denoted respectively as $\hat{R}_{1}^{(t)},\hat{R}_{2}^{(t)} \in \mathbb{R}^{p\times p}$, for the $\ell_{1}$ and the
group-smoothing penalty. In full, these can be expressed as
\[
\hat{R}_{1;(i,j)}^{(t)}=\begin{cases}
\mathrm{sign}(\sum_{s\le t}\Gamma_{i,j}^{(s)}) & \mathrm{if}\;\sum_{s\le t}\Gamma_{i,j}^{(s)}\ne0\\{}
[-1,1] & \mathrm{otherwise}
\end{cases}\; ; \;\hat{R}_{2}^{(t)}=\begin{cases}
\frac{\hat{\Gamma}^{(t)}}{\|\hat{\Gamma}^{(t)}\|_{F}} & \mathrm{if}\;\hat{\Gamma}^{(t)}\ne 0\\
\mathcal{B}_{F}(0,1) & \mathrm{otherwise}
\end{cases}\;,
\]
where $\mathcal{B}_{F}(0,1)$ is the Frobenius unit ball. 

\begin{proposition}

\label{prop:optimality_conditions}

The minimiser $\{\hat{\Theta}^{(t)}\}_{t=1}^{T}$
of the GFGL objective satisfies the following

\[
\sum_{t=l}^{T}\left\{ (\Theta^{(t)})^{-1}-(\hat{\Theta}^{(t)})^{-1} \right \}-\sum_{t=l}^{T}\Psi^{(t)}+\lambda_1\sum_{t=l}^{T}\hat{R}_{1}^{(t)}+\lambda_2\hat{R}_{2}^{(l)}=0\;,
\]
for all $l\in[T]$ and $\hat{R}_{2}^{(1)}=\hat{R}_{2}^{(T)}=0$. 

\end{proposition}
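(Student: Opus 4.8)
The plan is to read the displayed identity off as the first-order stationarity condition of the convex program \eqref{eq:GFGL_cost_function}, written in the difference coordinates $\Gamma^{(t)}$. Since $-\log\det$ is a barrier for the cone $\{U\succ 0\}$, any minimiser $\{\hat{\Theta}^{(t)}\}$ lies in the \emph{interior} of the positive semidefinite cone, so the constraint $U^{(t)}\succeq 0$ is inactive, the loss $l_T$ is differentiable at the minimiser, and optimality reduces to $0\in\partial(\mathrm{objective})(\hat{\Theta})$. The map $(\Theta^{(1)},\dots,\Theta^{(T)})\mapsto(\Gamma^{(1)},\dots,\Gamma^{(T)})$, with $\Gamma^{(1)}=\Theta^{(1)}$ and $\Gamma^{(t)}=\Theta^{(t)}-\Theta^{(t-1)}$, is a linear isomorphism with inverse $\Theta^{(t)}=\sum_{s\le t}\Gamma^{(s)}$; hence a point is optimal if and only if $0\in\partial_{\Gamma^{(l)}}F$ for every $l\in[T]$, where $F$ is the objective expressed in the $\Gamma$ variables. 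Each penalty is a finite-valued convex function on all of $\mathbb{R}^{p\times p}$ and the loss is smooth on the open effective domain, so standard subdifferential calculus (sum rule, and the chain rule through the linear isomorphism) applies.

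Next I would compute the three pieces of $\partial_{\Gamma^{(l)}}F$. Setting $g_t(\Theta)=-\log\det\Theta+\mathrm{tr}(\hat{S}^{(t)}\Theta)$, so $\nabla g_t(\Theta)=\hat{S}^{(t)}-\Theta^{-1}$, and using that $\Gamma^{(l)}$ enters $\Theta^{(t)}=\sum_{s\le t}\Gamma^{(s)}$ exactly for $t\ge l$, the loss contributes $\sum_{t=l}^{T}\{\hat{S}^{(t)}-(\hat{\Theta}^{(t)})^{-1}\}$. The same chain rule applied to $\lambda_1\sum_{t}\sum_{i\ne j}|\Theta^{(t)}_{ij}|$ gives $\lambda_1\sum_{t=l}^{T}\hat{R}_1^{(t)}$, where $\hat{R}_1^{(t)}$ is a subgradient of $\Theta\mapsto\sum_{i\ne j}|\Theta_{ij}|$ at $\hat{\Theta}^{(t)}$: its off-diagonal entries equal $\mathrm{sign}(\hat{\Theta}^{(t)}_{ij})=\mathrm{sign}(\sum_{s\le t}\hat{\Gamma}^{(s)}_{ij})$ when that sum is nonzero and lie in $[-1,1]$ otherwise, i.e. precisely the $\hat{R}_1^{(t)}$ of the statement. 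Crucially, in the $\Gamma$ coordinates the fused penalty $\lambda_2\sum_{t=2}^{T}\|\Gamma^{(t)}\|_F$ is \emph{separable}, so $\partial_{\Gamma^{(l)}}$ of it is simply $\lambda_2\hat{R}_2^{(l)}$ with $\hat{R}_2^{(l)}=\hat{\Gamma}^{(l)}/\|\hat{\Gamma}^{(l)}\|_F$ when $\hat{\Gamma}^{(l)}\ne 0$ and $\hat{R}_2^{(l)}\in\mathcal{B}_F(0,1)$ otherwise; since $\Gamma^{(1)}$ does not appear in this sum, and neither does any difference beyond the last time index, the corresponding boundary subgradients vanish as recorded in the statement. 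Adding the three contributions, $0\in\partial_{\Gamma^{(l)}}F$ reads
\[
0=\sum_{t=l}^{T}\{\hat{S}^{(t)}-(\hat{\Theta}^{(t)})^{-1}\}+\lambda_1\sum_{t=l}^{T}\hat{R}_1^{(t)}+\lambda_2\hat{R}_2^{(l)}
\]
for suitable selections of the subgradients; equivalently, one gets this by summing the per-time stationarity equations from $t=l$ to $T$, the fused contributions $\lambda_2(\hat{R}_2^{(t)}-\hat{R}_2^{(t+1)})$ telescoping to $\lambda_2\hat{R}_2^{(l)}$.

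Finally I would reintroduce the population covariance: with $\Theta^{(t)}$ the data-generating precision matrix as in the statement and $\Sigma^{(t)}=(\Theta^{(t)})^{-1}$, substitute $\hat{S}^{(t)}-(\hat{\Theta}^{(t)})^{-1}=\{(\Theta^{(t)})^{-1}-(\hat{\Theta}^{(t)})^{-1}\}-\Psi^{(t)}$, using the definition of $\Psi^{(t)}$ as the discrepancy between $\Sigma^{(t)}$ and $\hat{S}^{(t)}$, and rearrange; this yields the displayed identity for every $l\in[T]$.

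The part that calls for care --- rather than for a new idea --- is the convex-analysis bookkeeping: checking that the minimiser is interior to the PSD cone so the constraint drops out, that the subdifferential of the sum is the sum of the subdifferentials (immediate here, one summand being smooth and the others everywhere finite), and above all tracking the linear reparametrisation so that the cumulative $\sum_{t=l}^{T}$ ranges land on the loss and $\ell_1$ terms while the fused penalty contributes a single, block-local subgradient $\hat{R}_2^{(l)}$, with the right endpoint conventions. Choosing the difference coordinates $\Gamma^{(t)}$ is exactly the device that turns the non-separable group-fused penalty into one that contributes cleanly at a single index, and I do not expect any genuine analytic obstacle beyond this.
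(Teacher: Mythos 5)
Your proposal is correct and follows essentially the same route as the paper: reparametrise the objective in the difference coordinates $\Gamma^{(t)}$ so that the group-fused penalty becomes separable, differentiate (take subgradients) with respect to $\Gamma^{(l)}$ so the loss and $\ell_1$ terms accumulate over $t\ge l$ while the fused term contributes only $\lambda_2\hat{R}_2^{(l)}$, and then substitute $\Psi^{(t)}=\hat{S}^{(t)}-\Sigma^{(t)}$. Your additional care about interiority of the minimiser and the subdifferential sum rule is bookkeeping the paper leaves implicit, not a different argument.
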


\section{Theoretical Properties of the GFGL Estimator}

\subsection{Consistent Changepoint Estimation}

\label{sec:Theory_main}

We here present two complimentary results for changepoint consistency with the GFGL estimator. The first represents a standard asymptotic setting where $p$ is fixed and $T\rightarrow\infty$. The second result utilises a different concentration bound to constrain changepoint estimation error even in high-dimensions where $p>T$, in the doubly asymptotic setting $(p,T)\rightarrow\infty$. Further to the Gaussian sampling model in (\ref{eq:GFGL_model}), the high-dimensional result holds for any collection $(X^{(1)},\ldots,X^{(T)})$ where each $X_i^{(t)} / (\Sigma_{ii}^{(t)})^{1/2}$ for $t=1,\ldots,T$ and $i=1,\ldots,p$ is sub-Gaussian with parameter $\sigma$.

The changepoint consistency result presented here will take the form
of an upper bound on the maximum error of estimating a changepoint.
Let $\{\delta_{T}\}_{T\ge1}$ be a non-increasing positive sequence
that converges to zero as $T\rightarrow\infty$. This quantity should
converge at a rate which ensures an increasing absolute quantity $T\delta_{T}\rightarrow\infty$
as $T\rightarrow\infty$. The target of our results is to bound the maximum error to an ever decreasing proportion of the data, i.e. $\max_k|\hat{\tau}_k-\tau_k|/T\le \delta_T$. To establish a bound, we consider the setting where the minimum true distance between changepoints $d_{\min}:=\min_{k\in[K+1]}|\tau_{k}-\tau_{k-1}|$ increases with $T$, for simplicity let us assume this is bounded as a proportion $\gamma_{\min}<d_{\min}/T$. 

In order to control the variance in the underlying model it is required
to introduce several assumptions on the generating process:
\begin{assumption}{Bounded Eigenvalues}
\label{ass:bounded_eigenvalues}

There exist a constant $\phi_{\max}<\infty$ that
gives the maximum eigenvalues of the true covariance matrix
(across all blocks) such that $\phi_{\max}=\max_{k\in[K+1]}\{\Lambda_{\max}(\Sigma^{(k)})\}$.

\end{assumption}

\begin{assumption}{Bounded jump size}

\label{ass:finite_jumps_GFGL}

There exists a constant $M>0$ such that the difference between any
two blocks is bounded by a constant $\max_{k,k'\in[K+1]}\|\Sigma^{(k')}-\Sigma^{(k)}\|_{F}\le M $.
Additionally, the jumps are lower bounded according to $\min_{k\in[K+1]}\| \Sigma^{k}-\Sigma^{k-1}\|_F \ge \eta_{\min}$.

\end{assumption}

In addition to controlling variation in the sampling, we also need to ensure the regularisers $\lambda_1,\lambda_2$ are set appropriately. Generally one can increase or decrease the smoothing regulariser $\lambda_2$ to respectively decrease or increase the number of estimated changepoints However, in practice, we do not know a-priori the true number of changepoints in the model. Pragmatically, we may adopt a cross-validation strategy to choose the regularisers from data. However, in the theoretical setting, we assume that the regularisers are such that the correct number of changepoints are estimated.

\begin{assumption}{Appropriate Regularisation}
\label{ass:appropriate_reg}

Let $\lambda_{1}$ and $\lambda_{2}$ be specified such the solution of the GFGL problem (\ref{eq:GFGL_cost_function}) results in $|\hat{K}|=K$ changepoints. Furthermore, assume that $T$ is large enough such that $\beta_{1}:=(\eta_{\min}\gamma_{\min}T) \lambda_{2}^{-1}>32$,
$\beta_{2}:=\eta_{\min}\lambda_{1}^{-1}(p(p-1))^{-1/2}>8$,
and $\beta_{3}:=(\eta_{\min}T\delta_{T})\lambda_{2}^{-1}>3$. 
\end{assumption}

\begin{theorem}{Changepoint Consistency (Standard-dimensional asymptotics)}
\label{thm:cp_consistency_sd}

Given Assumptions \ref{ass:bounded_eigenvalues},\ref{ass:finite_jumps_GFGL},\ref{ass:appropriate_reg}. For a finite, but large $T$ such that $2<p\le T\delta_T$ and the jump size is lower bounded according to $\eta_{\min}>20\phi_{\max}e^{-1/2}p^{1/2}\sqrt{\log(T\delta_T)/T\delta_T}$, then the maximum changepoint error of GFGL (under sampling Eq. \ref{eq:GFGL_model}) is bounded according to probability
\[
P(\max_{k\in[K]}|\tau_{k}-\hat{\tau}_{k}|\le T\delta_{T})\ge1-C_{K}2(T\delta_T)^{-p/2}\rightarrow 1 \quad \mathrm{as}\quad T\rightarrow \infty\;,
\]
where $C_{K}=K(K^{2}2^{K+1}+4)$. 

\end{theorem}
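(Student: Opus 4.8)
The plan is to establish the claim in contrapositive form: on a high‑probability event controlling the stochastic part of the optimality conditions, \emph{every} true changepoint lies within $T\delta_T$ of some estimated changepoint. Since by Assumption~\ref{ass:appropriate_reg} there are exactly $K$ estimated changepoints — the same number as true ones — and since $\delta_T\to0$ while $\gamma_{\min}$ is fixed, so that $T\delta_T<\tfrac12 d_{\min}$ for $T$ large, the windows $N_k:=(\tau_k-T\delta_T,\tau_k+T\delta_T)$ are pairwise disjoint; if each $N_k$ contains at least one point of $\hat{\mathcal{T}}$ then each contains exactly one, and matching the two sorted lists gives $\max_k|\hat\tau_k-\tau_k|\le T\delta_T$. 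Everything therefore reduces to this ``covering'' claim.

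\emph{Deterministic core.} Suppose $N_k$ contains no estimated changepoint. Then $\hat\Theta^{(t)}$, hence $\hat\Sigma^{(t)}:=(\hat\Theta^{(t)})^{-1}$, is constant on $N_k$; write $\Sigma^-$, $\Sigma^+$ for the true covariance on the halves $J^-=[\tau_k-T\delta_T,\tau_k)$, $J^+=[\tau_k,\tau_k+T\delta_T)$, each of which sits inside a single true block (again since $T\delta_T<\tfrac12 d_{\min}$), so $\Sigma_0^{(t)}-\hat\Sigma^{(t)}$ is constant on each, and $\|\Sigma^- -\Sigma^+\|_F\ge\eta_{\min}$ by Assumption~\ref{ass:finite_jumps_GFGL}. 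Differencing the identity of Proposition~\ref{prop:optimality_conditions} at the two endpoints of $J^-$ (and separately of $J^+$), the telescoping loss term collapses to $(T\delta_T)(\Sigma^{\pm}-\hat\Sigma)$, the fused subgradient contributes a term of Frobenius norm at most $2\lambda_2$ (each $\hat R_2^{(t)}$ lies in the unit ball), and the $\ell_1$ subgradient contributes at most $\lambda_1(T\delta_T)\sqrt{p(p-1)}$ (each $\hat R_1^{(t)}$ has entries in $[-1,1]$ and zero diagonal). Dividing by $T\delta_T$ and using Assumption~\ref{ass:appropriate_reg} to bound $\lambda_1\sqrt{p(p-1)}=\eta_{\min}/\beta_2<\eta_{\min}/8$ and $2\lambda_2/(T\delta_T)=2\eta_{\min}/\beta_3$ gives
\[
\|\Sigma^{\pm}-\hat\Sigma\|_F\;\le\;\frac{1}{T\delta_T}\Big\|\sum_{t\in J^{\pm}}\Psi^{(t)}\Big\|\;+\;\frac{\eta_{\min}}{8}\;+\;\frac{2\eta_{\min}}{\beta_3}\,,
\]
and a triangle inequality across the two halves contradicts $\|\Sigma^- -\Sigma^+\|_F\ge\eta_{\min}$ as soon as the two partial‑sum (noise) terms are each below an absolute constant times $\eta_{\min}T\delta_T$. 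To make the numerical constants actually close one handles the $\lambda_2$ contribution over a longer interval than $J^\pm$ — the full estimated segment straddling $\tau_k$, whose length is first lower bounded by $\gtrsim\gamma_{\min}T$ using $\beta_1>32$ and $|\hat{\mathcal{T}}|=K$ (possibly via a preliminary lower bound on $\hat d_{\min}$) — so that its per‑sample penalty is $\lesssim\eta_{\min}/\beta_1<\eta_{\min}/32$.

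\emph{Probabilistic event.} It remains to show that $\tfrac1{T\delta_T}\|\sum_{t\in I}\Psi^{(t)}\|$ lies below the required threshold simultaneously over the collection of intervals $I$ arising above, with total failure probability at most $2C_K(T\delta_T)^{-p/2}$. Each $\Psi^{(t)}=X^{(t)}(X^{(t)})^{\T}-\Sigma^{(t)}$ is a centred sub‑Wishart increment, and on any $I$ inside one true block $\sum_{t\in I}\Psi^{(t)}$ is a sum of $|I|$ i.i.d.\ centred matrices; an $\varepsilon$‑net argument over the sphere in $\mathbb{R}^p$ together with sub‑exponential concentration of the quadratic forms $v^{\T}\big(\sum_{t\in I}\Psi^{(t)}\big)v$ (whose variance is controlled by $\phi_{\max}$, Assumption~\ref{ass:bounded_eigenvalues}) gives, for $|I|\ge T\delta_T$, a bound of the form $P\big(\vertiii{\sum_{t\in I}\Psi^{(t)}}_2>r\big)\le 2\cdot 9^{p}\exp(-c\,r^2/(\phi_{\max}^2|I|))$ in operator norm (this is what yields a $p^{1/2}$, rather than $p$, in the final rate). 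Choosing $r\asymp\phi_{\max}e^{-1/2}p^{1/2}\sqrt{T\delta_T\,\log(T\delta_T)}$ makes the right‑hand side $2(T\delta_T)^{-p/2}$, and the hypothesis $\eta_{\min}>20\,\phi_{\max}e^{-1/2}p^{1/2}\sqrt{\log(T\delta_T)/(T\delta_T)}$ is precisely the condition that $r$ is then smaller than the constant multiple of $\eta_{\min}T\delta_T$ demanded above, the factor $20$ absorbing the slack from $\beta_1,\beta_2,\beta_3$ and the triangle inequality. Finally one unions over the relevant intervals, indexed by the discrete positions of the $K$ true and $K$ estimated changepoints: enumerating, for each true changepoint, which side is uncovered and which estimated segment straddles it, their number is $O(K^{2}2^{K})$, producing $C_K=K(K^{2}2^{K+1}+4)$ and hence $P(\max_k|\tau_k-\hat\tau_k|\le T\delta_T)\ge1-2C_K(T\delta_T)^{-p/2}\to1$.

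\emph{Main obstacle.} The crux is the deterministic step. The $\ell_{2,1}$ fused subgradient is not separable and its contribution $2\lambda_2$ does not shrink with the window length, so one must argue that a ``missed'' true changepoint sits strictly inside a \emph{long} estimated segment (length $\gtrsim\gamma_{\min}T$, which is where $\beta_1>32$ is genuinely used) while keeping the loss term piecewise‑constant only on the short windows $J^\pm$ of length $T\delta_T$ on which the noise radius is controlled via $\beta_3$; reconciling these two scales, together with the fixed $\ell_1$ bias $\eta_{\min}/\beta_2$, so that the overall coefficient of $\eta_{\min}$ stays strictly below $1$ — and tracking which matrix norm is used at each step so that the $p$‑dependence emerges as $p^{1/2}$ — is where the real work lies. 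The concentration inequality and the union‑bound combinatorics are routine by comparison.
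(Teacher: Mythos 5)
Your overall architecture is the paper's: difference the stationarity conditions of Proposition \ref{prop:optimality_conditions} at two time points, play the resulting bound off against $\|\Sigma^{(k)}-\Sigma^{(k+1)}\|_{F}\ge\eta_{\min}$ via the triangle inequality, control the accumulated noise $\sum_{t}\Psi^{(t)}$ with an operator-norm concentration bound (the paper uses the Davidson--Szarek/Wainwright Gaussian-ensemble bound together with $\|X\|_{F}\le\sqrt{p}\,\vertiii{X}_{2}$, which is exactly where the $p^{1/2}$ and the $2(T\delta_T)^{-p/2}$ come from), and union over $O(K^{3}2^{K})$ events. You also correctly locate the crux --- but that is precisely where your argument has a gap. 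To bound $\|\hat\Sigma-\Sigma^{(k+1)}\|_{F}$ \emph{individually} (which you need: differencing across the whole window $N_k$ only controls $\|\Sigma^{(k)}+\Sigma^{(k+1)}-2\hat\Sigma\|_{F}$ and is consistent with $\hat\Sigma$ sitting at the average of the two blocks), you must difference the optimality conditions over a \emph{long} interval, of length of order $\gamma_{\min}T$, that lies inside a single true block \emph{and} on which the estimate is constant --- only then is the fused-subgradient contribution $2\lambda_2$ divided by something large enough for $\beta_1>32$ to bite; dividing by $T\delta_T$ and invoking $\beta_3>3$ is far too weak, as your own display shows ($2\eta_{\min}/\beta_3$ per half gives $4\lambda_2/(T\delta_T)<\tfrac{4}{3}\eta_{\min}$ in total, which exceeds $\eta_{\min}$). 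The event $\{N_k\text{ contains no estimated changepoint}\}$ does not supply such an interval: the next estimated changepoint may sit at $\tau_k+T\delta_T+1$, leaving only a length-$T\delta_T$ constant stretch inside block $k+1$. Your proposed fix, a ``preliminary lower bound on $\hat d_{\min}$'', is neither established nor a consequence of Assumption \ref{ass:appropriate_reg} or of $|\hat{\mathcal T}|=K$; no such bound is proved in the paper either.

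The paper escapes this by \emph{not} attempting a one-shot covering argument. It introduces the intermediate good event $C_T=\{\max_k|\hat\tau_k-\tau_k|<d_{\min}/2\}$: on $A_{T,k}\cap C_T$ the matched changepoint satisfies $\hat\tau_{k+1}>\bar\tau_k=\lfloor(\tau_k+\tau_{k+1})/2\rfloor$, so the estimate is constant on $[\tau_k,\bar\tau_k)$, an interval of length at least $d_{\min}/2$ contained in true block $k+1$; this yields the required bound on $\|\hat\Sigma^{(k+1)}-\Sigma^{(k+1)}\|_{F}$ (the events $A_{T,k,1},A_{T,k,2},A_{T,k,3}$). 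The complement $A_{T,k}\cap C_T^{c}$ is then handled by a separate combinatorial decomposition into $D_T^{(l)},D_T^{(m)},D_T^{(r)}$ (Lemmas \ref{lemma:upper_bound_a_tk_cap_D_tm} and \ref{lemma:union_bound_left}), which is the actual source of the $2^{K}$ in $C_K$ --- not the straddling-segment enumeration you describe. Your proof would be repaired by adopting this two-stage conditioning; as written, the deterministic core does not close.
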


\begin{theorem}{Changepoint-consistency (High-dimensional asymptotics)}
\label{thm:cp_consistency_hd}

Given Assumptions \ref{ass:bounded_eigenvalues},\ref{ass:finite_jumps_GFGL},\ref{ass:appropriate_reg}. If $\eta_{\mathrm{min}}>5c_{\sigma} 2^5 p\sqrt{\log (p^{\beta/2})/T\delta_T }$, where $c_\sigma=(1+4\sigma^2)\max_{ii,k}\{ \Sigma_{0;ii}^{(k)}\}$ and $\eta_{\min}\in(0,2^4c_{\sigma}p)$, then 
\[
P(\max_{k\in[K]}|\tau_{k}-\hat{\tau}_{k}|\le T\delta_{T})\ge1-C_{K}1/p^{\beta-2}\rightarrow 1\quad\mathrm{as}\quad (T,p)\rightarrow \infty\;.
\]
\end{theorem}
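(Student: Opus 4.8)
The plan is to run the same argument that underlies Theorem~\ref{thm:cp_consistency_sd}, changing only the tail bound applied to the noise matrices $\Psi^{(t)} = \hat S^{(t)} - \Sigma_0^{(t)}$: where the standard-dimensional case uses a Gaussian/chi-square deviation bound, here we use a Bernstein inequality valid under the sub-Gaussian assumption on $X_i^{(t)}/(\Sigma_{0;ii}^{(t)})^{1/2}$. Write $A_T$ for the event that every true changepoint $\tau_k$ has an estimated changepoint within $T\delta_T$. Since Assumption~\ref{ass:appropriate_reg} forces $\hat K = K$ and, for $T$ large, $2T\delta_T < \gamma_{\min}T \le d_{\min}$, on $A_T$ the two changepoint sets are in bijection with matched pairs at distance $\le T\delta_T$, so $A_T = \{\max_k |\tau_k - \hat\tau_k| \le T\delta_T\}$ and it suffices to bound $P(A_T^c)$. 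On $A_T^c$ some true changepoint $\tau_k$ has no estimated changepoint in $(\tau_k - T\delta_T, \tau_k + T\delta_T)$; hence $\hat\Theta^{(t)}$ is a single matrix $\hat\Theta_\star$, with covariance $\hat\Sigma_\star := \hat\Theta_\star^{-1}$, throughout that interval, while $\Theta_0^{(t)}$ has its only jump at $\tau_k$ inside it. The estimated block carrying this interval therefore extends into true block $k$ and into true block $k+1$; using also that the remaining true changepoints carry nearby estimated ones, one extracts a window $W_L$ inside true block $k$ and a window $W_R$ inside true block $k+1$, on both of which $\hat\Theta^{(t)} = \hat\Theta_\star$, whose lengths one may take of order $T\delta_T$ (and, for calibration, of order $\gamma_{\min}T$). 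As in the standard-dimensional proof, $A_T^c$ is realised in at most $C_K$ combinatorially distinct ways, over which we union bound.

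\textbf{Deterministic step.} Differencing the stationarity identity of Proposition~\ref{prop:optimality_conditions} between consecutive levels and then summing over a window lying in one estimated block telescopes the group-fused subgradient, giving $\sum_{t=l_1}^{l_2-1}\{\Sigma_0^{(t)} - \hat\Sigma_\star - \Psi^{(t)} + \lambda_1\hat R_1^{(t)}\} + \lambda_2(\hat R_2^{(l_1)} - \hat R_2^{(l_2)}) = 0$. Applying this over $W_L$ (where $\Sigma_0^{(t)} \equiv \Sigma_0^{(k)}$) and over $W_R$ (where $\Sigma_0^{(t)} \equiv \Sigma_0^{(k+1)}$), normalising each by the window length and subtracting, the unknown $\hat\Sigma_\star$ cancels and what remains is an identity for $\Sigma_0^{(k)} - \Sigma_0^{(k+1)}$. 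Taking Frobenius norms and using $\|\hat R_2^{(\cdot)}\|_F \le 1$ at each boundary level, $\|\hat R_1^{(t)}\|_F \le \sqrt{p(p-1)}$ (off-diagonal entries in $[-1,1]$), and the available window lengths, Assumption~\ref{ass:appropriate_reg} ($\beta_1 > 32$ controlling the $\lambda_2$ term at the $\gamma_{\min}T$ scale, $\beta_2 > 8$ the $\lambda_1$ term, $\beta_3 > 3$ the $\lambda_2$ term at the localisation scale) renders each regulariser contribution a small fraction of $\eta_{\min}$. Consequently, on $A_T^c$ we obtain the purely deterministic bound $\eta_{\min} \le \|\Sigma_0^{(k)} - \Sigma_0^{(k+1)}\|_F \le 5\cdot 2^5\,\max_W |W|^{-1}\|\sum_{t\in W}\Psi^{(t)}\|_F$, the maximum over the (data-dependent) windows above, all of length at least $T\delta_T$.

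\textbf{Probabilistic step.} By the sub-Gaussian$(\sigma)$ hypothesis, each $\Psi_{ij}^{(t)} = X_i^{(t)}X_j^{(t)} - \Sigma_{0;ij}^{(t)}$ is centred sub-exponential with parameter of order $c_\sigma = (1+4\sigma^2)\max_{i,k}\Sigma_{0;ii}^{(k)}$, and the $X^{(t)}$ are independent, so Bernstein's inequality gives $P(|[\sum_{t\in W}\Psi^{(t)}]_{ij}| > |W|u) \le 2\exp(-c\,|W|\min(u^2/c_\sigma^2, u/c_\sigma))$; the hypothesis $\eta_{\min} < 2^4 c_\sigma p$ places the relevant level $u \asymp \eta_{\min}/p$ in the range $u \le c_\sigma$, where the exponent is $-c\,|W|u^2/c_\sigma^2$. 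Bounding $\|\cdot\|_F \le p\|\cdot\|_\infty$, taking a union bound over the $p^2$ entries and over the candidate windows (all with $|W| \ge T\delta_T$), and choosing $u = \eta_{\min}/(5\cdot2^5 p)$, the lower bound $\eta_{\min} > 5c_\sigma 2^5 p\sqrt{\log(p^{\beta/2})/T\delta_T}$ makes $|W|u^2/c_\sigma^2 \gtrsim \log(p^{\beta/2})$, so (after absorbing the $p^2$ entry factor and the lower-order window count) the union bound is $O(p^{-(\beta-2)})$; the event $\{\max_W |W|^{-1}\|\sum_{t\in W}\Psi^{(t)}\|_F \ge \eta_{\min}/(5\cdot2^5)\}$, which contains $A_T^c$, thus has probability $O(p^{-(\beta-2)})$. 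Multiplying by the combinatorial factor $C_K$ yields $P(A_T^c) \le C_K/p^{\beta-2}$, which tends to $0$ as $(T,p)\to\infty$ when $\beta > 2$.

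\textbf{Where the difficulty lies.} The main obstacle is the deterministic step: one must choose the levels $l_1, l_2$ and the calibration/localisation windows so that (i) the estimated covariance $\hat\Sigma_\star$, for which no a priori bound is available, cancels exactly, and (ii) the surviving $\lambda_1$- and $\lambda_2$-contributions land exactly at the scales at which the thresholds $\beta_1 > 32$, $\beta_2 > 8$, $\beta_3 > 3$ bite, with constants matching the $5\cdot 2^5$ and $2^4$ in the statement — this is where the bookkeeping of window lengths ($\gamma_{\min}T$ versus $T\delta_T$) and the peeling over which changepoints are already well localised all enter. By contrast, the probabilistic step is routine once the sub-exponential/Bernstein bound is in place, the only subtlety being the regime restriction $\eta_{\min} \in (0, 2^4 c_\sigma p)$ that keeps the Bernstein tail in its Gaussian branch; and the combinatorial reduction producing $C_K$ is identical to the standard-dimensional case.
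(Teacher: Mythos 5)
Your proposal follows essentially the same route as the paper: a union bound over changepoints with the same combinatorial peeling producing $C_K$, a differencing of the Proposition~\ref{prop:optimality_conditions} stationarity identities over windows straddling $\tau_k$ so that Assumption~\ref{ass:appropriate_reg} absorbs the regulariser terms and only the jump $\|\Sigma_0^{(k)}-\Sigma_0^{(k+1)}\|_F$ versus an averaged noise term survives, and then the elementwise sub-exponential (Bernstein-type) tail bound of Ravikumar et al.\ lifted to the Frobenius norm via $\|\cdot\|_F\le p\|\cdot\|_\infty$ and a union bound over $p^2$ entries, with the restriction $\eta_{\min}\in(0,2^4c_\sigma p)$ keeping the quadratic branch of the tail. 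The only cosmetic difference is that you cancel $\hat\Sigma_\star$ by subtracting two window-normalised identities, whereas the paper splits into the three events $E_1,E_2,E_3$ and controls $\|\hat\Sigma^{(k+1)}-\Sigma_0^{(k+1)}\|_F$ via a midpoint window; these are equivalent.
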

\begin{proof}
The proofs of the above follow a similar line of
argument as used in \citet{Harchaoui2010,Kolar2012}. However, several
modifications are required in order to allow analysis with the Gaussian
likelihood and group-fused regulariser. Additionally, for Theorem \ref{thm:cp_consistency_hd} we investigate the use of concentration bounds that operate in the setting where $p>T\delta_T$. Full details can be found in the Appendix (ref).
\end{proof}

The results demonstrate that asymptotically changepoint error can be constrained to a decreasing fraction $\delta_T \rightarrow 0$ of the time-series. Indeed, this occurs with increasing probability when estimation is performed with an increasing number of data-points. In both results, the minimal jump size must be sufficient in order to consistently detect changes. In the standard-dimensional setting we require $\eta_{\min} = \Omega(p^{1/2} \sqrt{\log (T\delta_T)/(T\delta_T)})$, and therefore asymptotically we can recover increasingly small changes.
In the doubly asymptotic setting of Theorem \ref{thm:cp_consistency_hd} we have $\eta_{\min}=\Omega (p\sqrt{\log (p^{\beta/2})/(T\delta_T)})$, thus we still require $\eta_{\min}$ to grow with $p$. One should note that the high-dimensional bound is only guaranteed for $\eta_{\min}\in(0,2^4c_{\sigma}p)$ which stipulates a minimal sample size $T\delta_T=\Omega(\log(p^{\beta/2}))$. 

\subsection{Consistent Graph Recovery}

\label{sec:graph_recovery}

One of the key properties of GFGL is that it simultaneously estimates
both the changepoint and model structure. In this section we will
turn our eye to the estimation of model structure in the form of
the precision matrices between changepoints. In particular, we consider
that a set of $\hat{K}=K$ changepoints have been identified as per Assumption \ref{ass:appropriate_reg} and Theorem \ref{thm:cp_consistency_hd}. In the previous
section we developed theory for the recovery of changepoints, for
a given (large) number of data-points we demonstrated that the changepoint
error can be bounded within a region $T\delta_{T}\rightarrow0$. We here assume that such a bound holds, and develop the theory in the high-dimensional setting analogous to Theorem \ref{thm:cp_consistency_hd}.

\begin{assumption}{Changepoint error is small}

\label{ass:bounded_cp_error}

Consider that the number of changepoints is estimated correctly, and the changepoint consistency event
\[
E_{\tau}:=\left\{ \max_{k\in[B]}|\hat{\tau}_{k}-\tau_{k}|\le T\delta_{T}\right\}
\]
holds with some increasing probability $f_{\tau}(p,T)\rightarrow1$
as $(p,T)\rightarrow\infty$.

\end{assumption}

A key advantage of splitting the changepoint and model-estiamtion
consistency arguments as we do here, is that we can consider a simplified
model structure such that the GFGL estimator may be parameterised
in terms of a $K+1$ block-diagonal matrix $\Theta_{K}\in\tilde{\mathbb{R}}^{Bp\times Bp}$.
Conditional on segmentation, we do not need to deal with the fact
that the model may be arbritrarily mis-specified (this is bounded
by Assumption \ref{ass:bounded_cp_error}). As such, in this section
the dimensionality of the model space is fixed with respect to an
increasing number of time-points. The following results demonstrate,
that as expected, gathering increasing amounts of data relating to
a fixed number of blocks allows us to identify the model with increasing
precision.

To demonstrate model recovery, let us define a set of pairs $\mathcal{M}_{k}$
which indicate the support set of the true model in block $k$ and
its compliment $\mathcal{M}_{k}^{\perp}$ as
\[
\mathcal{M}_{k}=\{(i,j)\:|\:\Theta_{0;i,j}^{(k)}\ne0\}\quad\mathrm{and}\quad\mathcal{M}_{k}^{\perp}=\{(i,j)\:|\:\Theta_{0;i,j}^{(k)}=0\}\;.
\]
The recovery of the precision matrix sparsity pattern in true block $l$ from estimated block $k$ can be monitored by the sign-consistency event defined:
\[
E_{\mathcal{M}}(\hat{\Theta}^{(k)};\Theta_{0}^{(l)}):=\{\mathrm{sign}(\hat{\Theta}_{ij}^{(k)})=\mathrm{sign}(\Theta_{0;ij}^{(l)})\;\forall i,j\in\mathcal{M}_{l}\}\;.
\]

In order to derive bounds on model recovery, one must make assumptions
on the true structure of $\Theta_{0}$. Such conditions
are often referred to as \emph{incoherence} or \emph{irrepresentability}
conditions. In the setting of graphical structure learning, these
conditions act to limit correlation between edges and restrict the
second order curvature of the loss function. In the case where we
analyse GFGL under Gaussian sampling the Hessian $\Gamma_{0}\equiv\nabla_{\Theta}^{2}L(\Theta)|_{\Theta_{0}}$
relates to the Fisher information matrix such that $\Gamma_{0;(j,k)(l,m)}=\mathrm{Cov}(X_{j}X_{k},X_{l}X_{m})$.
Written in this form we can understand the Fisher matrix as relating
to the covariance between \emph{edge variables} defined as $Z_{(i,j)}=X_{i}X_{j}-\mathbb{E}[X_{i}X_{j}]$
for $i,j\in\{1,\ldots,p\}$.

\begin{assumption}{Incoherence Condition} 

\label{ass:incoherence}

Let $\mathcal{M}$ denote the set of components relating to true edges
in the graph and $\mathcal{M}^{\perp}$ (for block $k$) its compliment. For example,
$\Gamma_{0;\mathcal{M}\mathcal{M}}^{(k)}$ refers to
the sub matrix of the Fisher matrix relating to edges in the true
graph. Assume that for each $k=1,\ldots,K+1$ there exists some $\alpha_{k}\in(0,1]$
such that 
\[
\underset{e\in\mathcal{M}^{\perp}}{\max}\|\Gamma_{0;e\mathcal{M}}^{(k)}(\Gamma_{0;\mathcal{M\mathcal{M}}}^{(k)})^{-1}\|_{1}\le(1-\alpha_{k})\,.
\]

\end{assumption}

In the multivariate Gaussian case we have $\max_{e\in\mathcal{M}^{\perp}}\|\mathbb{E}[Z_{e}Z_{\mathcal{M}}^{\top}]\mathbb{E}[Z_{\mathcal{M}}Z_{\mathcal{M}}^{\top}]^{-1}\|_{1}\le(1-\alpha_k)$
for each $k$, in the theory below we denote and track $\alpha = \min_k \{\alpha_k\}$. One can therefore interpret the incoherence condition
as a statement on the correlation between edge variables which are
outside the model subspace $Z_{(i,j)}$ such that $(i,j)\not\in \mathcal{E}$,
with those contained in the true model $(i,j)\in \mathcal{E}$. In practice,
this sets bounds on the types of graph and associated covariance structures
which estimators such as graphical lasso can recover (see the discussion
Sec. 3.1.1 \citet{Ravikumar2011} and \citet{Meinshausen2008}). The theory presented here can be seen as an extension of \citet{Ravikumar2011} to non-stationary settings. Similarly to their original analysis we will track the size of the operator norms $ \vertiii{\Sigma_0^{(k)}}_\infty:=\max_i \sum_{j=1}^p |\Sigma_{0;ij}^{(k)}|$ and $\vertiii{\Gamma_0^{(k)}}_\infty$, we simplify our analysis by tracking the upper bound $K_{\Sigma_0}:=\max_k \vertiii{\Sigma_0^{(k)} }_\infty$ and $K_{\Gamma_0}:=\max_k{\vertiii{\Gamma_0^{(k)} } }_\infty$.

When using GFGL there will generally be an error associated with the
identification of changepoints and as such the estimated and ground-truth
blocks do not directly align. With this in mind, the model consistency
proof we present does not necessarily compare the $k$th estimated
block, to the $k$th ground-truth block. Instead, the proof is constructed
such that the structure in an estimated block $k\in[\hat{B}]$ is compared
to the ground-truth structure in block $l$ such that the blocks $k$
and $l$ maximally overlap with respect to time. Notationally, let $\hat{n}_k=\hat{\tau}_k-\hat{\tau}_{k-1}$ and $\hat{n}_{lk}=| \{ \hat{\tau}_{k-1},\ldots,\hat{\tau}_{k} \} \cap \{ \tau_{l-1},\ldots,\tau_{l} \} |$. The maximally overlapping block is then defined as 
$
k_{\max}=\arg\max_{l} \{ \hat{n}_{lk}  \}
$.

\begin{theorem}{Bounds on the estimation error}
\label{thm:bound_on_estimation error}

Consider the GFGL estimator with Assumptions (\ref{ass:bounded_cp_error},\ref{ass:incoherence}). Assume $\lambda_1=16\alpha^{-1}\epsilon$, $\lambda_2=\rho\lambda_1$ for some finite $\rho>0$ and $\epsilon>2^4\sqrt{2}c_{\sigma_\infty}\sqrt{\log(4p^{\beta}) /\gamma_{\min} T}$ where $\beta>2$. Let $d$ be the maximum degree of each node in the true model, and define  
\begin{equation}
\label{eq:min_sample_condition_infty_error_bound}
v_{\mathcal{C}}=6\{1+16\alpha^{-1}(1+2\hat{n}_k^{-1}\rho)\}d\max\{ K_{\Sigma_{0}}K_{\Gamma_{0}},K_{\Gamma_{0}}^2 K_{\Sigma_{0}}^3 \}\;.
\end{equation}
Then for $T\ge 2^9 \gamma_{\min}^{-1}\max\{1/8c_{\sigma_\infty},v_{\mathcal{C}}\}^2 c^2_{\sigma_\infty}\log(4p^\beta)$, we have
\begin{equation}
\label{eq:precision_error_bound}
\|\hat{\Theta}^{(k)}-\Theta^{(k_{\max})}_0\|_\infty\le 2K_{\Gamma_{0}}\{1+16\alpha^{-1}(1+2\rho \hat{n}_k^{-1}) \}\epsilon\;,
\end{equation}
in probability greater than $1-(1/p^{\beta-2}+f_\tau(p,T))$.

\end{theorem}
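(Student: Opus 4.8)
The plan is to adapt the primal--dual witness argument of \citet{Ravikumar2011} for the graphical lasso to the block-constant, group-fused setting, working throughout conditionally on the changepoint event $E_\tau$ of Assumption~\ref{ass:bounded_cp_error} and folding the non-separable fused subgradients into an effective gradient perturbation. First I would reduce to a per-block analysis: on $E_\tau$ the number of blocks is correct and within each estimated block $B_k$ the estimate is constant, $\hat\Theta^{(t)}=\hat\Theta^{(k)}$ for $t\in B_k$, so the $\ell_1$ subgradients $\hat R_1^{(t)}$ may also be taken constant on $B_k$. Differencing the optimality condition of Proposition~\ref{prop:optimality_conditions} at the block separators $l=\hat\tau_{k-1}$ and $l=\hat\tau_{k}$ and dividing by $\hat n_k$ produces a stationarity equation of the form
\[
\bar S^{(k)}-(\hat\Theta^{(k)})^{-1}+\lambda_1\hat R_1^{(k)}+\hat n_k^{-1}\lambda_2\bigl(\hat R_2^{(\hat\tau_{k-1})}-\hat R_2^{(\hat\tau_{k})}\bigr)=0\,,
\]
with $\bar S^{(k)}=\hat n_k^{-1}\sum_{t\in B_k}\hat S^{(t)}$ the block-averaged empirical covariance; the last term is not separable over $(i,j)$, but since $\hat R_2$ lies in the Frobenius unit ball it is bounded entrywise by $2\lambda_2/\hat n_k$ and can be treated as an adversarial perturbation of the gradient.

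Next I would control the effective noise $W^{(k)}:=\bar S^{(k)}-(\Theta_0^{(k_{\max})})^{-1}$ by splitting it as $\bigl(\bar S^{(k)}-\bar\Sigma^{(k)}\bigr)+\bigl(\bar\Sigma^{(k)}-\Sigma_0^{(k_{\max})}\bigr)$ with $\bar\Sigma^{(k)}=\hat n_k^{-1}\sum_{t\in B_k}\Sigma^{(t)}$. For the stochastic part, on $E_\tau$ one has $\hat n_k\ge\gamma_{\min}T$ up to lower order, so a sub-Gaussian tail bound of the Bickel--Levina / \citet{Ravikumar2011} (Lemma~1) type with a union bound over the $p^2$ entries and the $K+1$ blocks gives $\|\bar S^{(k)}-\bar\Sigma^{(k)}\|_\infty\le\epsilon$ for all $k$ with probability at least $1-1/p^{\beta-2}$, precisely when $\epsilon>2^4\sqrt2\,c_{\sigma_\infty}\sqrt{\log(4p^\beta)/(\gamma_{\min}T)}$. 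For the deterministic misalignment part, on $E_\tau$ at most $2T\delta_T$ of the $\hat n_k$ indices in $B_k$ lie in a true block other than $k_{\max}$, so by the bounded-jump Assumption~\ref{ass:finite_jumps_GFGL}, $\|\bar\Sigma^{(k)}-\Sigma_0^{(k_{\max})}\|_\infty=O(\delta_T/\gamma_{\min})$, which is of smaller order than $\epsilon$ in the stated regime and is absorbed, giving $\|W^{(k)}\|_\infty\le\epsilon$ on the corresponding event.

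The core of the argument would then be the witness construction together with a Taylor-remainder bound, exactly as in \citet{Ravikumar2011} but with $W^{(k)}$ in place of the empirical noise and $\lambda_1+2\lambda_2/\hat n_k$ in place of $\lambda$. I would define the restricted witness $\tilde\Theta^{(k)}$ by minimising the block objective over matrices supported on $\mathcal M_{k_{\max}}$ (with the fused subgradients fixed at their optimal values and the entries on $\mathcal M_{k_{\max}}^{\perp}$ set to zero). The second-order expansion of $-\log\det$ at $\Theta_0^{(k_{\max})}$ has Hessian $\Gamma_0^{(k_{\max})}$ and a remainder $R(\Delta^{(k)})$, $\Delta^{(k)}:=\tilde\Theta^{(k)}-\Theta_0^{(k_{\max})}$, and the remainder identity plus $\vertiii{(\Gamma_0^{(k_{\max})})^{-1}}_\infty\le K_{\Gamma_0}$, $\vertiii{\Sigma_0^{(k_{\max})}}_\infty\le K_{\Sigma_0}$ give $\|R(\Delta)\|_\infty\lesssim d\,K_{\Sigma_0}^3\|\Delta\|_\infty^2$ whenever $\|\Delta\|_\infty\le(3dK_{\Sigma_0})^{-1}$. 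A contraction/fixed-point argument on the $\ell_\infty$-ball of radius $r:=2K_{\Gamma_0}\bigl(\|W^{(k)}\|_\infty+\lambda_1+2\lambda_2/\hat n_k\bigr)$ then gives $\|\Delta^{(k)}\|_\infty\le r$, and the radius condition $r\le(3dK_{\Sigma_0})^{-1}$ is exactly what the sample-size lower bound $T\ge 2^9\gamma_{\min}^{-1}\max\{(8c_{\sigma_\infty})^{-1},v_{\mathcal C}\}^2 c_{\sigma_\infty}^2\log(4p^\beta)$ guarantees, with $v_{\mathcal C}$ of (\ref{eq:min_sample_condition_infty_error_bound}) collecting the dependence on $d,K_{\Sigma_0},K_{\Gamma_0},\alpha,\rho$. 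Substituting $\|W^{(k)}\|_\infty\le\epsilon$, $\lambda_1=16\alpha^{-1}\epsilon$, $\lambda_2=\rho\lambda_1$ yields $r\le 2K_{\Gamma_0}\{1+16\alpha^{-1}(1+2\rho\hat n_k^{-1})\}\epsilon$, which is (\ref{eq:precision_error_bound}). Strict dual feasibility on $\mathcal M_{k_{\max}}^{\perp}$ --- using the incoherence Assumption~\ref{ass:incoherence} with $\alpha=\min_k\alpha_k$ and that $\lambda_1=16\alpha^{-1}\epsilon$ dominates the effective gradient noise $\|W^{(k)}\|_\infty+2\lambda_2/\hat n_k+\|R(\Delta^{(k)})\|_\infty$ by the required margin (here $2\rho/\hat n_k\to0$) --- then shows $\|\hat R^{(k)}_{1;\mathcal M^{\perp}}\|_\infty<1$, so the witness is the unique block minimiser, $\hat\Theta^{(k)}=\tilde\Theta^{(k)}$, and the bound holds; a final union bound over the concentration event and $E_\tau$ gives the stated probability.

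The hard part will be handling the non-separability of the group-fused penalty. In the plain graphical lasso the stationarity condition splits entrywise, which is what makes strict dual feasibility checkable coordinatewise, whereas here $\hat R_2^{(\hat\tau_{k-1})}$ and $\hat R_2^{(\hat\tau_{k})}$ couple block $k$ to its neighbours and carry no entrywise structure; I expect the argument to have to bound these subgradients only through their Frobenius-ball constraint ($\le 2\lambda_2/\hat n_k$ per entry), absorb them into the effective gradient, and then re-check that incoherence and the choice of $\lambda_1$ still suffice --- this is why an elementwise KKT decomposition in the style of \citet{Saegusa2016} is not available and why the extra $\rho\hat n_k^{-1}$ term appears in (\ref{eq:precision_error_bound}). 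A secondary difficulty is that $\bar S^{(k)}$ averages over a data-dependent index set (since $\hat\tau$ is estimated), which is what forces the whole argument to be conditional on $E_\tau$ and makes the bounded-jump assumption necessary to control the misalignment bias $\bar\Sigma^{(k)}-\Sigma_0^{(k_{\max})}$.
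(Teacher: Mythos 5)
Your proposal is correct and follows essentially the same route as the paper: a primal--dual witness argument conditional on $E_\tau$, with the block-averaged stationarity condition, the fused subgradients absorbed as an entrywise perturbation of size $2\lambda_2/\hat n_k$, the Ravikumar-style remainder bound and $\ell_\infty$-ball fixed-point argument yielding the radius $r=2K_{\Gamma_0}\{\|W^{(k)}\|_\infty+\lambda_1+2\lambda_2/\hat n_k\}$, and the same sub-Gaussian concentration for the block-averaged covariance error. The only cosmetic difference is that you split the effective noise into a stochastic part plus a deterministic misalignment bias $\bar\Sigma^{(k)}-\Sigma_0^{(k_{\max})}$, whereas the paper keeps each overlapping segment's sampling error relative to its own true covariance and bounds the average via $\max\{1,d_{\min}/\hat n_k\}\|W_{\infty;d_{\min}/2}\|_\infty$; both treatments are equivalent at the level of rigour the paper itself employs.
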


\begin{theorem}{Model-selection consistency}
\label{thm:sign_consistency}

In addition to the assumptions in Theorem \ref{thm:bound_on_estimation error}. Let $\theta_{\min}^{(k)} := \min_{ij}|\Theta^{(k)}_{0;ij}|$ for all $(i,j)\in\mathcal M_k$ and for each $k=1,\ldots,B$. Let
\[
v_{\theta} = 2K_{\Sigma_0}\{1+16\alpha^{-1}(1+2\rho\hat{n}_k^{-1})\}\theta_{\min}^{-1}\;.
\]
If $T\ge 2^9\gamma_{\min}^{-1}\max\{(8c_{\sigma_\infty})^{-1},v_\mathcal{C},v_{\theta} \}^2c_{\sigma_\infty}^2\log(4p^\beta)$ then GFGL attains sign-consistency 
\[
P\{E_{\mathcal{M}}(\hat{\Theta}^{(k)};\Theta_{0}^{(k_{\max})})\}\ge 1- \{1/p^{\beta-2}+f_\tau(p,T)\}\;,
\]
with probability tending to one as $(p,T)\rightarrow\infty$.
\end{theorem}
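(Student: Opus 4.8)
The plan is to read off model‑selection consistency as a direct corollary of the elementwise bound $(\ref{eq:precision_error_bound})$ of Theorem~\ref{thm:bound_on_estimation error}: the extra sample‑size requirement carried by $v_{\theta}$ is exactly what is needed to push the estimation error on the true support strictly below the minimum signal $\theta_{\min}$ of block $k_{\max}$, so that no true entry is zeroed out or has its sign flipped. Since the hypothesis of Theorem~\ref{thm:sign_consistency} only enlarges the maximum in the sample‑size condition (adjoining $v_{\theta}$), every hypothesis of Theorem~\ref{thm:bound_on_estimation error} holds a fortiori with the same choices $\lambda_1=16\alpha^{-1}\epsilon$, $\lambda_2=\rho\lambda_1$. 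I would therefore work on the intersection of the changepoint event $E_{\tau}$ of Assumption~\ref{ass:bounded_cp_error} with the event on which Theorem~\ref{thm:bound_on_estimation error} delivers $(\ref{eq:precision_error_bound})$; by a union bound this intersection has probability at least $1-\{1/p^{\beta-2}+f_{\tau}(p,T)\}$, which is the bound claimed and tends to one as $(p,T)\to\infty$. On $E_{\tau}$ the estimated segment $k$ overlaps the true segment $k_{\max}$ in all but at most $2T\delta_T$ of its $\hat n_k\ge(\gamma_{\min}-2\delta_T)T$ time points, so $\hat n_k^{-1}\to0$ and the population quantities entering $(\ref{eq:precision_error_bound})$ and $(\ref{eq:min_sample_condition_infty_error_bound})$ --- in particular $\theta_{\min}$, $\alpha$, $K_{\Sigma_0}$, $K_{\Gamma_0}$ --- may be identified with those of block $k_{\max}$, exactly as in the proof of Theorem~\ref{thm:bound_on_estimation error}.

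\textbf{Signal beats noise.} On this event $(\ref{eq:precision_error_bound})$ bounds $\max_{(i,j)\in\mathcal M_{k_{\max}}}|\hat\Theta_{ij}^{(k)}-\Theta_{0;ij}^{(k_{\max})}|$ by $2K_{\Gamma_0}\{1+16\alpha^{-1}(1+2\rho\hat n_k^{-1})\}\epsilon$; in fact the primal--dual construction inside the proof of Theorem~\ref{thm:bound_on_estimation error} gives a slightly sharper control on the support block, with $K_{\Sigma_0}$ in place of $K_{\Gamma_0}$, and it is this quantity against which $v_{\theta}$ is calibrated. Taking $\epsilon$ at a fixed multiple of its threshold $2^4\sqrt2\,c_{\sigma_\infty}\sqrt{\log(4p^{\beta})/(\gamma_{\min}T)}$ and inserting the hypothesis $T\ge 2^9\gamma_{\min}^{-1}\max\{(8c_{\sigma_\infty})^{-1},v_{\mathcal C},v_{\theta}\}^2c_{\sigma_\infty}^2\log(4p^{\beta})$, the $v_{\theta}$ term of the maximum rearranges precisely to
\[
2K_{\Sigma_0}\{1+16\alpha^{-1}(1+2\rho\hat n_k^{-1})\}\,\epsilon\;<\;\theta_{\min}\;.
\]
Hence for every $(i,j)\in\mathcal M_{k_{\max}}$ we have $|\hat\Theta_{ij}^{(k)}-\Theta_{0;ij}^{(k_{\max})}|<|\Theta_{0;ij}^{(k_{\max})}|$, which forces $\mathrm{sign}(\hat\Theta_{ij}^{(k)})=\mathrm{sign}(\Theta_{0;ij}^{(k_{\max})})$ and $\hat\Theta_{ij}^{(k)}\ne0$; this is precisely the event $E_{\mathcal M}(\hat\Theta^{(k)};\Theta_0^{(k_{\max})})$. (The same construction additionally forces $\hat\Theta_{ij}^{(k)}=0$ for $(i,j)\in\mathcal M_{k_{\max}}^{\perp}$, which yields full sparsity‑pattern recovery although the stated event needs only the on‑support part.) Intersecting with the probability bound from the first step completes the argument.

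\textbf{Where the care is needed.} The one genuinely GFGL‑specific wrinkle, absent from the i.i.d.\ graphical‑lasso analysis of \citet{Ravikumar2011}, is the control of the smoothing‑induced bias term $2\rho\hat n_k^{-1}$ appearing in both $v_{\mathcal C}$ and $v_{\theta}$: the fused penalty perturbs the estimate on segment $k$ through a contribution scaling like $\lambda_2/\hat n_k$, so a priori $v_{\theta}$ need not be bounded. This is benign only because on $E_{\tau}$ the segment length $\hat n_k$ grows linearly in $T$, whence $\hat n_k^{-1}\to0$, $v_{\theta}$ stays bounded, and the minimum‑signal requirement implied by the stated $T$‑bound is of the stationary order $\theta_{\min}=\Omega(\sqrt{\log p/T})$ up to a $\rho$‑dependent constant. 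The remaining steps --- matching $\hat n_k$, $\theta_{\min}$ and $\alpha$ to block $k_{\max}$ via $E_{\tau}$, and verifying the algebraic rearrangement of the $T$‑bound into the displayed inequality --- are bookkeeping, and no new probabilistic estimate beyond those in Theorem~\ref{thm:bound_on_estimation error} is required.
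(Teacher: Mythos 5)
Your proposal is correct and follows essentially the same route as the paper: Theorem \ref{thm:sign_consistency} is proved there as a direct corollary of Theorem \ref{thm:bound_on_estimation error} via the fixed-point bound of Lemma \ref{lemma:bound_on_infty_error}, with the extra $v_{\theta}$ term in the sample-size condition forcing the elementwise error $r$ on the support below $\theta_{\min}/2$ so that no sign can flip, and the stated probability coming from the same union bound over the concentration event and $E_{\tau}$. Your observation that the error bound carries $K_{\Gamma_0}$ while $v_{\theta}$ is written with $K_{\Sigma_0}$ points to what appears to be a constant-level inconsistency in the paper's statement rather than a flaw in your argument.
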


\begin{proof}
Theorem \ref{thm:bound_on_estimation error} is obtained utilising a primal-dual-witness approach conditional on the event $E_\tau$. This follows a similar argument to that used in \citet{Ravikumar2011}, but requires modifications due to the smoothing regulariser in GFGL. Theorem \ref{thm:sign_consistency}, is a corollary of Theorem \ref{thm:bound_on_estimation error} on the condition that the true entries in the precision matrix are sufficiently large. Full details of the proof are found in Appendix .
\end{proof}

The above bounds suggest that indeed, if regularisation is appropriately set one can obtain a consistent estimate of the precision matrices from GFGL. However, there are several important insights we can take from the results. Firstly, we clearly see the effect of the smoothing regulariser in Eq. \ref{eq:precision_error_bound}. In particular, a larger $\rho$ will result in a larger upper bound for the error. In the analagous results from the i.i.d. graphical lasso case \citet{Ravikumar2011}, the bound here is of a form $2K_{\Gamma_0}(1+8\alpha^{-1})\epsilon$. In fact, our results suggest that the additional error in the precision matrix is a function of the ratio $\lambda_2(\lambda_1\hat{n}_k)^{-1}$, if $\lambda_2/\lambda_1$ does not grow faster than $\hat{n}$ then estimation consistency can be achieved. If we assume, for example that $\lambda_1=\mathcal{O}(\sqrt{\log(p^\beta)/T})$ and $\hat{n}_k =\mathcal{O}(T)$, then this gives $\lambda_2$ the flexibility to grow with increasing $T$, but at a rate $\lambda_2=\mathcal{O}(\sqrt{T\log p^\beta})$. We note that under such scaling it is possible to satisfy the conditions of Assumption \ref{ass:appropriate_reg} thus both changepoint and structure estimation consistency is achieved.

\section{Discussion}

\subsection{A note on minimum detectable jumps}
When compared to the neighbourhood based estimators, GFGL considers changepoints
at the full precision matrix scale as opposed to seperately for each node. One might therefore expect
that the minimum jump size required in our result (Theorem 1) is greater
than that utilised in neighbourhood selection case (c.f. \citet{Kolar2012}).
For example, in the neighbourhood selection case, one may consider
the analogous quantity $\eta_{\min}^{\mathrm{NS}}(a):=\min_{k\in[K]}\|\Sigma_{a,\cdot}^{(k+1)}-\Sigma_{a,\cdot}^{(k)}\|_{2}$,
for nodes $a=1,\ldots,p$. Summing over nodes, the neighbourhood selection
jump size can now be related to the mixed (group) norm $\|\Sigma^{(k+1)}-\Sigma^{(k)}\|_{2,1}=\sum_{a}\|\Sigma_{a,\cdot}^{(k+1)}-\Sigma_{a,\cdot}^{(k)}\|_{2}$.
Furthermore, if the smallest jump occurs at the same block for each
neighbourhood, i.e. $\arg\min_{k\in[K]}\|\Sigma_{a,\cdot}^{(k+1)}-\Sigma_{a,\cdot}^{(k)}\|_{2}=\arg\min_{k\in[K]}\|\Sigma_{a',\cdot}^{(k+1)}-\Sigma_{a',\cdot}^{(k)}\|_{2}$
for all $a\ne a'\in[p]$ , then $\sum_{a}\eta_{\min}^{\mathrm{NS}}(a)=\min_{k}\|\Sigma^{(k+1)}-\Sigma^{(k)}\|_{2,1}$.
Using the inequality (for $x\in\mathbb{R}^{n}$) $\|x\|_{2}\le\|x\|_{1}\le\sqrt{n}\|x\|_{2}$,
the jumps as measured through the group-norm can be related to those
measured in a Frobenius sense, such that $\eta_{\min}\le\sum_{a}\eta_{\min}^{\mathrm{NS}}(a)\le\sqrt{p}\eta_{\min}$.

Thus, even though the minimum jump size in the GFGL case is greater,
i.e. $\eta_{\min}>\eta_{\min}^{\mathrm{NS}}(a)$, it is not proportionally
greater when one considers summing over nodes. In our analysis it
should be noted that consistent recovery of changepoints requires
a tradeoff between the minimum jump-size $\eta_{\min}$ and the amount
of data $T$. For example, a smaller minimum jump-size will generally
require more data; as expected it is harder to detect small jumps.
The relation $\eta_{\min}\le\sum_{a=1}^{p}\eta_{\min}^{\mathrm{NS}}(a)$
suggests that the minimum jump-size at a graph-wide (precision matrix
wide) level is proportionally smaller when measured in the Frobenius norm, than at a node-wise level. As a result, for equivalent scaling of
$\eta_{\min}$ and $\eta_{\min}^{\mathrm{NS}}$ the graph-wide GFGL
method will be able to detect smaller (graph-wide) jumps with an equivalent
level of data. Conversely, if the jumps one is interested in occur
at the neighbourhood level the neighbourhood based method would be
more appropriate, although this is generally not the case with the
block-constant GGM model (\ref{eq:GFGL_model}).

\subsection{Consistency with increasing dimensionality}

The work of \cite{Kolar2012} presents results in a standard asymptotic $T\rightarrow \infty$ setting. As such, they do not assess how fast changepoint consistency is achieved in relation to the number of data-streams $p$. In the case of GFGL, we give an upper bound on $P(E_\tau)$ (Theorem \ref{thm:cp_consistency_sd}) that directly relates to the dimensionality of the problem. Of particular note, is that the convergence rate increases with the number of data-streams used. This aligns with what one may intuitively expect, i.e. if changepoints are shared across most data-streams, then increasing the number of data-streams will increase the "signal" associated with changepoints. We may thus improve changepoint detection performance by performing joint inference for changepoints.

In our high-dimensional analysis (Theorems \ref{thm:cp_consistency_hd}, \ref{thm:bound_on_estimation error},\ref{thm:sign_consistency}) we consider what happens when $p>T\delta_T$. Our upper bound for changepoint error has a requirement that $\eta_{\min}=\Omega(p^{-1}\sqrt{\log(p^\beta/2)/T\delta_T})$. We note that this lower bound allows for the calculation of a minimal $T$ for the bound to hold. However, one must also take care that conditions in Assumption \ref{ass:appropriate_reg} are met. While $\lambda_1$ can be seen somewhat as a free parameter in the proof of changepoint consistency, it plays a key role in bounding the estimation error of the block-wise precision matrices. Specifically, Theorem \ref{thm:bound_on_estimation error} mandates $\lambda_1=\Omega(\sqrt{\log (p^\beta)/T})$. We note, that this lower bound is generally sufficient to meet Assumption \ref{ass:appropriate_reg}, for changepoint consistency, which requires $\lambda_1=\mathcal{O}(\eta_{\min}/p)=\mathcal{O}(\sqrt{\log(p^(\beta/2))/T\delta_T})$ when $\eta_{\min}$ is guided by the lower bound in Theorem \ref{thm:cp_consistency_hd}.

\appendix


\section{Numerical Optimisation and optimality conditions}

\subsection{Multi-block ADMM algorithm for GFGL}

Algorithm \ref{alg:ADMM_full} provides extended detail relating to
the pseudo-code presented in the main paper. The particular steps
for each of the proximity updates (minimisation steps in the augmentd
lagrangian) are given. An alternative ADMM algorithm for solving the
GFGL problem is described in \citet{Hallac2017} and \citet{Gibberd2017}.

\begin{algorithm}[h]
\SetKwInput{KwInit}{Init}
\KwIn{Data and regulariser parameters: $\boldsymbol{X}, \lambda, \rho $, convergence thresholds: $t_p,\: t_d$}
\KwOut{Precision matrices: $\{ \hat{\boldsymbol{\Theta}}^{(t)} \}_{t=1}^T$ and  changepoints: $  \{ \hat{\tau}_1,\ldots ,\hat{\tau}_{\hat{K}} \}$}
\KwInit{$U^{(t)}=V^{(t)}=W^{(t)}=I\in\mathbb{R}^{p\times p}$}

\While{$\epsilon_p > t_p$, $\epsilon_d > t_d$}{
\For{t=1,\ldots,T}{
$P^{(t)}=\begin{cases}V_{1;n}^{(t)}-\mathcal{V}_{1;n}^{(t)} & t=T\\(V_{1;n}^{(t)}-\mathcal{V}_{1;n}^{(t)})+(V_{2;n}^{(t)}-\mathcal{V}_{2;n}^{(t)}) & \mathrm{otherwise}\end{cases}$\;
Perform eigen-decomposition: $\{ \eta^{\top}_n,L\} = \mathrm{eig}\{ (\hat{S}^{(t)}-P^{(t)})\}$ \;
Update eigenvalues, for each $i=1,\ldots,p$ : $\eta_{i;n+1} = -(\eta_{i;n} -\sqrt{\eta_{i;n}^{2} +8})/4$ \;
$U^{(t)}_{n+1} = L\mathrm{diag}(\eta_{n+1})L^{\top}$\;
}
Update Auxiliary Estimate:\\
$V_{1;n+1}^{(1)} = \mathrm{soft}([U_{n+1}^{(1)}+\mathcal{V}_{1;n}^{(1)}]_{\backslash\backslash ii};\:\lambda)$ \;
\For{$t=2,\ldots,T$}{
$P^{t}=[(U_{n+1}^{(t)}+\mathcal{V}_{1;n}^{(t)})+(V_{2}^{(t-1)}+W_{n}^{(t-1)}-\mathcal{W}_{n}^{(t-1)})]/2$\;
$V_{1;n+1}^{(t)} = \mathrm{soft}([P]_{\backslash\backslash ii};\:\lambda)$ \;
$V_{2;n+1}^{(t-1)}=(1/2)[(U_{n+1}^{(t-1)}+\mathcal{V}_{n}^{(t-1)}) + (V_{1;n}^{(t)}-W_{n}^{(t-1)} + \mathcal{W}_{n}^{(t-1)})]$\;
Threshold for jumps:\\
$Q^{(t)} = V_{1;n+1}^{(t)} - V_{2;n+1}^{(t-1)} + \mathcal{W}_{n}^{(t-1)}$\;
$W^{t-1}_{n+1} = ( Q^{(t)} / \|Q^{(t)}\|_F)\max (\|Q^{(t)}\|_F-\lambda\rho,0)$\;
}
Update Dual variables:\\
$\mathcal{V}_{1;n+1}^{(t)}=\mathcal{V}_{1;n}^{(t)}+U_{n+1}^{t}-V_{1;n}^{(t)}$ \;
$\mathcal{V}_{2;n+1}^{(t)}=\mathcal{V}_{2;n}^{(t)}+U_{n+1}^{t}-V_{2;n}^{(t-1)}$ \;
$\mathcal{W}_{n+1}^{(t)}=\mathcal{W}_{n}^{(t)}+V_{1;n+1}^{(t)}-V_{2;n}^{(t-1)}-W_{n+1}^{(t)}$ \;
}
\KwRet{$\{ \hat{\Theta}^{(t)} \}$}

\caption{ADMM algorithm for the minimisation of the GFGL cost function. \label{alg:ADMM_full}}
\end{algorithm}

\subsection{Proof of Proposition \ref{prop:optimality_conditions} (Optimality
Conditions)}

\label{subsec:proof_optimaity_GFGL}

In GFGL we have a set of conditions for each time-point which must
be met jointly. Unlike non-fused estimators, we also have to consider
the stationarity conditions due to a differenced term. The GFGL objective
can then be re-written in terms of this difference, where one may equivalently minimise
\[
\sum_{t=1}^{T}\bigg(-\log\det(\sum_{s\le t}\Gamma^{(s)})+\mathrm{tr}(\hat{S}^{(t)}\sum_{s\le t}\Gamma^{(s)})\bigg)+\lambda_{1}\sum_{t=1}^{T}\|\sum_{s\le t}\Gamma_{\backslash ii}^{(s)}\|_{1}+\lambda_{2}\sum_{t=2}^{T}\|\Gamma^{(t)}\|_{F}\;,
\]
for $\sum_{s\le t}\Gamma^{(s)} \succ 0 $ for $t=1,\ldots,T$.
Setting the derivative to zero we obtain:
\[
0=\sum_{t=l}^{T}\bigg(-(\sum_{s\le t}\hat{\Gamma}^{(s)})^{-1}+\hat{S}^{(t)}\bigg)+\lambda_{1}\sum_{t=l}^{T}\hat{R}_{1}^{(t)}+\lambda_{2}\hat{R}_{2}^{(l)}\;.
\]
The above derivative, is linked through the data via the function $\hat{S}^{(t)}=X^{(t)}(X^{(t)})^\top$.
Recalling that $X^{(t)}\sim\mathcal{N}_{p}(0,\Sigma^{(t)})$,
we can then link $\Sigma^{(t)}$ to $\Gamma^{(l)}$
via $\Sigma^{(t)}=(\sum_{s\le t}\Gamma^{(s)})^{-1}$.
We now have a way to relate the estimated precision matrices $\hat{\Gamma}^{(t)}$
and the corresponding ground-truth. Let us write the sampling error for the covariance matrix at time point $t$ as $\Psi^{(t)}:=\hat{S}^{(t)}-\Sigma^{(t)}$.
Substituting $\Psi^{(t)}$ into the above stationarity
conditions for GFGL we obtain;
\[
\sum_{t=l}^{T}\bigg((\sum_{s\le t}\Gamma^{(s)})^{-1}-(\sum_{s\le t}\hat{\Gamma}^{(s)})^{-1}\bigg)-\sum_{t=l}^{T}\Psi^{(t)}+\lambda_{1}\sum_{t=l}^{T}\hat{R}_{1}^{(t)}+\lambda_{2}\hat{R}_{2}^{(l)}\;,
\]
and thus equivalently obtaining the result in Proposition \ref{prop:optimality_conditions}.\hfill\ensuremath{\square}


\section{Proof of changepoint consistency}

We relate the proof bounding the maximum deviation between estimated
and true changepoints to the probability of an individual changepoint
breaking the bound. Following \citet{Harchaoui2010},
we utilise the union bound
\[
P[\max_{k\in[K]}|\tau_{k}-\hat{\tau}_{k}|\ge T\delta_{T}]\le\sum_{k\in[K]}P[|\tau_{k}-\hat{\tau}_{k}|\ge T\delta_{T}]\;.
\]
The compliment of the event on the LHS is equivalent to the
target of proof; we wish to demonstrate $P[\max_{k\in[K]}|\tau_{k}-\hat{\tau}_{k}|\le T\delta_{T}]\rightarrow1$.
In order to show this, we need to show the LHS above goes to zero
as $T\rightarrow\infty$. It is sufficient, via the union bound, to
demonstrate that the probability of the "bad" events:
\begin{equation}
\label{eq:bad_event}
A_{T,k}:=\{|\tau_{k}-\hat{\tau}_{k}|>T\delta_{T}\}\;,
\end{equation}
go to zero for all $k\in[K]$. The strategy presented here separates
the probability of $A_{T,k}$ occurring across complimentary events.
In particular, let us construct what can be thought of as a good event,
where the estimated changepoints are within a region of the true ones:

\begin{equation}
\label{eq:good_event}
C_{T}:=\left\{ \underset{k\in[K]}{\max}|\hat{\tau}_{k}-\tau_{k}|<\frac{d_{\min}}{2}\right\} \;.
\end{equation}
The task is then to show that $P[A_{T,k}]\rightarrow0$ by showing
$P[A_{T,k}\cap C_{T}]\rightarrow0$ and $P[A_{T,k}\cap C_{T}^{c}]\rightarrow0$
as $T\rightarrow0$.

\subsection{Stationarity induced bounds}

As a first step, let us introduce some bounds based on the optimality conditions which occur in
probability one. From here, a
set of events can be constructed that occur when the stationarity
conditions are met. By intersecting these events with
$A_{T,k}\cap C_{T}$ and $A_{T,k}\cap C_{T}^{c}$, we can construct an upper bound on the probability for changepoint error exceeding a level $T\delta_T$.

Without loss of generality, consider the stationarity equations (Prop.
\ref{prop:optimality_conditions}) with changepoints $l=\tau_{k}$
and $l=\hat{\tau}_{k}$ such that $\hat{\tau}_{k}<\tau_{k}$. We note, that an argument for the reverse situation $\tau_{k}>\hat{\tau}_{k}$ follows through symmetry. Taking the differences between the equations
we find

\begin{equation}
\|\sum_{t=\hat{\tau}_{k}}^{\tau_{k}-1}(\Sigma^{(t)}-\hat{\Sigma}^{(t)})-\sum_{t=\hat{\tau}_{k}}^{\tau_{k}-1}\Psi^{(t)}+\lambda_{1}\sum_{t=\hat{\tau}}^{\tau_{k}-1}\hat{R}_{1}^{(t)}\|_{F}\le2\lambda_{2}\;.\label{eq:stationarity_difference_1}
\end{equation}
The gradient from the $\ell_{1}$ term $\sum_{t=\hat{\tau}_{k}}^{\tau_{k}-1}\lambda\hat{R}_{1}^{(t)}$
can obtain a maximum value of $\pm\lambda_{1}(\tau_{k}-\hat{\tau})$
for each entry in the precision matrix. Transferring this to the RHS and splitting the LHS in terms of the stochastic and estimated terms we obtain

\begin{equation}
\|\sum_{t=\hat{\tau}_{k}}^{\tau_{k}-1}(\Sigma_0^{(t)}-\hat{\Sigma}^{(t)})\|_{F}-\|\sum_{t=\hat{\tau}_{k}}^{\tau_{k}-1}\Psi^{(t)}\|_{F}\le2\lambda_{2}+\lambda_{1}\sqrt{p(p-1)}(\tau_{k}-\hat{\tau}_{k})\;.\label{eq:stationarity_diff}
\end{equation}
The next step is to replace the time indexed inverse precision matrices
$\Theta^{(t)}$ with the block-covariance matrices indexed
$\Sigma^{(k)}$ and $\Sigma^{(k+1)}$. We
can re-express the difference in precision matrices as the sum of
a difference between true values before $\tau_{k}$ , i.e. $\Sigma^{(k+1)}-\Sigma^{(k)}$,
and the difference between the next ($k+1$)st true block and estimated
block, i.e. $\hat{\Sigma}^{(k+1)}-\Sigma^{(k+1)}$
to obtain:

\begin{eqnarray}
\lambda_{2}+\lambda_{1}\sqrt{p(p-1)}(\tau_{k}-\hat{\tau}_{k}) & \ge & \underset{\|R_{1}\|_{F}}{\underbrace{\|\sum_{t=\hat{\tau}_{k}}^{\tau_{k}-1}\Sigma^{(k)}-\Sigma^{(k+1)}\|_{F}}}-\underset{\|R_{2}\|_{F}}{\underbrace{\|\sum_{t=\hat{\tau}_{k}}^{\tau_{k}-1}\hat{\Sigma}^{(k+1)}-\Sigma^{(k+1)}\|_{F}}}\nonumber \\
 &  & \quad -\underset{\|R_{3}\|_{F}}{\underbrace{\|\sum_{t=\hat{\tau}_{k}}^{\tau_{k}-1}\Psi^{(t)}\|_{F}}}\;,\label{eq:stationarity_seperated}
\end{eqnarray}
which holds with probability one. Define the events:

\begin{align*}
E_{1}:= & \{\lambda_{2}+\lambda_{1}\sqrt{p(p-1)}(\tau_{k}-\hat{\tau}_{k})\ge\frac{1}{3}\|R_{1}\|_{F}\}\\
E_{2}:= & \{\|R_{2}\|_{F}\ge\frac{1}{3}\|R_{1}\|_{F}\}\\
E_{3}:= & \{\|R_{3}\|_{F}\ge\frac{1}{3}\|R_{1}\|_{F}\}
\end{align*}
Since we know that the bound (\ref{eq:stationarity_seperated}) occurs
with probability one, then the union of these three events must also
occur with probability one, i.e. $P[E_{1}\cup E_{2}\cup E_{3}]=1$.

\subsection{Bounding the Good Cases}

One of the three events above are required to happen, either together,
or separately. We can thus use this to bound the probability of both
the good $C_{T}$ and bad $A_{T,k}$ events. Similarly to \citet{Harchaoui2010,Kolar2012}
we obtain
\begin{eqnarray*}
P[A_{T,k}\cap C_{T}] & \le & P[\overset{A_{T,k,1}}{\overbrace{A_{T,k}\cap C_{T}\cap E_{1}}}]+P[\overset{A_{T,k,2}}{\overbrace{A_{T,k}\cap C_{T}\cap E_{2}}}]+P[\overset{A_{T,k,3}}{\overbrace{A_{T,k}\cap C_{T}\cap E_{3}}}]
\end{eqnarray*}
The following sub-sections describe how to separately bound these
sub-events.

Unlike in the work of \citet{Kolar2012}, there is no stochastic element
(related to the data $X_{t}$) within the first event $A_{T,k,1}$.
We can bound the probability of $P[A_{T,k,1}]$ by considering the
event $\{\frac{1}{3}\|R_{1}\|_{F}\le\lambda_{2}+\lambda_{1}\sqrt{p(p-1)}(\tau_{k}-\hat{\tau}_{k})\}$.
Given $\|R_{1}\|_{F}=\|\sum_{t=\hat{\tau}_{k}}^{\tau_{k}-1}\Sigma^{(k)}-\Sigma^{(k+1)}\|_{F}\ge(\tau_{k}-\hat{\tau}_{k})\eta_{\min}$
we therefore obtain the bound
\[
P[A_{T,k,1}]\le P[(\tau_{k}-\hat{\tau}_{k})\eta_{\min}/3\le\lambda_{2}+\lambda_{1}\sqrt{p(p-1)}(\tau_{k}-\hat{\tau}_{k})]\;.
\]
When the events $C_{T},A_{T,k}$ occur we have $T\delta_{T}<\tau_{k}-\hat{\tau}_{k}\le d_{\min}/2$
to ensure the event $A_{T,k,1}$ does not occur, we need:
\begin{equation}
\eta_{\min}T\delta_{T}>3\lambda_{2}\quad;\quad \eta_{\min}>3\lambda_{1}\sqrt{p(p-1)}\;.\label{eq:A_T_k_1_convergence_criteria}
\end{equation}
These conditions are satisfied by Assumption \ref{ass:appropriate_reg}. Thus, for a large enough $T$, we can show that
the probability $P[A_{T,k,1}]=0$, the size of this $T$ depends on
the quantities in Eq. \ref{eq:A_T_k_1_convergence_criteria}.

Now let us consider the event $A_{T,k,2}$. Consider the quantity
$\bar{\tau}_{k}:=\lfloor(\tau_{k}+\tau_{k+1})/2\rfloor$. On the event
$C_{n}$, we have $\hat{\tau}_{k+1}>\bar{\tau}_{k}$ so $\hat{\Sigma}^{(t)}=\hat{\Sigma}^{(k+1)}$
for all $t\in[\tau_{k},\bar{\tau}_{k}]$. Using the optimality conditions
(Prop \ref{prop:optimality_conditions}) with changepoints at $l=\bar{\tau}_{k}$
and $l=\tau_{k}$ we obtain
\[
2\lambda_{2}+\lambda_{1}\sqrt{p(p-1)}(\bar{\tau}_{k}-\tau_{k}) \ge \|\sum_{t=\tau_{k}}^{\bar{\tau}_{k}-1}\hat{\Sigma}^{(k+1)}-\Sigma^{(k+1)}\|_{F}-\|\sum_{t=\tau_{k}}^{\bar{\tau}_{k}-1}\Psi^{(t)}\|_{F}\;,
\]
and thus
\begin{equation}
\|\hat{\Sigma}^{(k+1)}-\Sigma^{(k+1)}\|_{F} \le \frac{4\lambda_{2}+2\lambda_{1}\sqrt{p(p-1)}(\bar{\tau}_{k}-\tau_{k})+2\|\sum_{t=\tau_{k}}^{\bar{\tau}_{k}-1}\Psi^{(t)}\|_{F}}{\tau_{k+1}-\tau_{k}}\label{eq:that-t}
\end{equation}
We now combine the bounds for events $E_{1}$ and $E_{2}$, via $E_{2}:=\{\|R_{2}\|_{F}\ge\frac{1}{3}\|R_{1}\|_{F}\}$
and the bounds $\|R_{1}\|_{F}\ge(\tau_{k}-\hat{\tau}_{k})\eta_{\min}$
and $\|R_{2}\|_{F}\le(\tau_{k}-\hat{\tau}_{k})\|\hat{\Sigma}^{k+1}-\Sigma^{k+1}\|_{F}$
. Substituting in (\ref{eq:that-t}) we have

\begin{equation}
P[A_{T,k,2}]\le P[E_{2}]=P\left[\eta_{\min}\le\frac{12\lambda_{2}+6\lambda_{1}\sqrt{p(p-1)}(\bar{\tau}_{k}-\tau_{k})+6\|\sum_{t=\tau_{k}}^{\bar{\tau}_{k}-1}\Psi^{(t)}\|_{F}}{\tau_{k+1}-\tau_{k}}\right]\;.\label{eq:concentration_bound_ATk2}
\end{equation}
Splitting the probability into three components, we obtain
\begin{equation}
P[A_{T,k,2}]\le P[\eta_{\min}d_{\min}\le12\lambda_{2}]+P[\eta_{\min}\le3\lambda_{1}\sqrt{p(p-1)}]+P\left[\eta_{\min}\le\frac{6\|\sum_{t=\tau_{k}}^{\bar{\tau}_{k}-1}\Psi^{(t)}\|_{F}}{\tau_{k+1}-\tau_{k}}\right]\;.\label{eq:sum_of_probs_ATK2_sd}
\end{equation}
Convergence of the first two terms follows as in $A_{T,k,1}$, the
second is exactly covered in $A_{T,k,1}$; however, the third term
$\eta_{\min}\le3\|\sum_{t=\tau_{k}}^{\bar{\tau}_{k}-1}\Psi^{(t)}\|_{F}/(\bar{\tau}_{k}-\tau_{k})$
requires some extra treatment. As $\bar{\tau}_{k}<\tau_{k+1}$, we
can relate the covariance matrix of the ground-truth (time-indexed)
and block (indexed by $k$) such that $\Sigma^{(t)}=\Sigma^{(k)}$
for all $t\in[\tau_{k},\tau_{k+1}]$. One can now write the sampling
error across time into one which relates to blocks $k$ as

\[
\|\sum_{t=\tau_{k}}^{\bar{\tau}_{k}-1}\Psi^{(t)}\|_{F}\equiv(\bar{\tau}_{k}-\tau_{k})\|W_{k;\bar{\tau}_{k}-\tau_{k}}\|_{F}\;,
\]
where $W_{k;n}=[n^{-1}\sum_{t=1}^n X^{(t)}(X^{(t)})^{\top} ] - \Sigma_{0}^{(k)}$. A control on this quantity is given in the following lemma:

\begin{lemma}{Sample Error Bound in High-Dimensions}

\label{lemma:tail_bound_hd}
Let $\hat{W}_k^{(n)}=[n^{-1}\sum_{t=1}^n X^{(t)}(X^{(t)})^{\top} ] - \Sigma_{0}^{(k)}$, then for any $\epsilon\in(0,2^3c_{\sigma}p)$, with sub-Gaussian noise $\{X^{(t)} \}_{t=1}^T$, the error is bounded according to
\[
P(\|\hat{W}_{k;n}\|_{F}>\epsilon)\le4p^{2}\exp\left(-\frac{n\epsilon^{2}}{2^{7}c_{\sigma}^{2}p^{2}}\right)\;,
\]
where $c_\sigma = (1+4\sigma^2)\max_{ii}\{\Sigma^{(k)}_{0;ii}\}$.
Furthermore, if $\epsilon>
\epsilon_{\mathrm{conv}}^{\alpha}:=c_\sigma 2^{4}\sqrt{2}p\sqrt{\log(p^{\beta/2})/n}
$ and $\beta>2$ then 
$P(\|\hat{W}_{k;n}\|_{F}>\epsilon)\le p^{(2-\beta)} \rightarrow 0$ as $p\rightarrow \infty$.

\begin{proof}
See section \ref{proof:empirical_covariance_frobenius_bound}.
\end{proof}
\end{lemma}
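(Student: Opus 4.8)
The plan is to bound the Frobenius-norm deviation by a union of elementwise deviations and then apply a per-entry sub-exponential concentration inequality for the sample covariance. Since $\hat{W}_{k;n}\in\mathbb{R}^{p\times p}$ has $p^2$ entries, the deterministic inequality $\|\hat{W}_{k;n}\|_F\le p\|\hat{W}_{k;n}\|_\infty$ holds, so that
\[
P\bigl(\|\hat{W}_{k;n}\|_F>\epsilon\bigr)\le P\bigl(\|\hat{W}_{k;n}\|_\infty>\epsilon/p\bigr)\le p^2\max_{i,j}P\bigl(|\hat{W}_{k;n;ij}|>\epsilon/p\bigr)\,,
\]
the last step being a union bound over the $p^2$ coordinates. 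It therefore suffices to control a single entry.

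First I would observe that each entry $\hat{W}_{k;n;ij}=n^{-1}\sum_{t=1}^n\{X_i^{(t)}X_j^{(t)}-\Sigma^{(k)}_{0;ij}\}$ is an average of $n$ i.i.d.\ centred variables (within block $k$ the $X^{(t)}$ are identically distributed), each of which is a product of two sub-Gaussian variables and hence sub-exponential. Following the moment-generating-function bound and Chernoff step of \citet{Ravikumar2011} (their Lemma~1), this yields, for every $\delta\in(0,8c_\sigma)$ with $c_\sigma=(1+4\sigma^2)\max_{ii}\{\Sigma^{(k)}_{0;ii}\}$,
\[
P\bigl(|\hat{W}_{k;n;ij}|>\delta\bigr)\le 4\exp\left(-\frac{n\delta^2}{2^7 c_\sigma^2}\right)\,.
\]
Setting $\delta=\epsilon/p$ turns the admissible interval into $\epsilon\in(0,2^3 c_\sigma p)$ and, combined with the union bound above, gives the first claim $P(\|\hat{W}_{k;n}\|_F>\epsilon)\le 4p^2\exp\{-n\epsilon^2/(2^7 c_\sigma^2 p^2)\}$.

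For the asymptotic statement I would substitute $\epsilon=\epsilon_{\mathrm{conv}}^\alpha=c_\sigma 2^4\sqrt{2}\,p\sqrt{\log(p^{\beta/2})/n}$; since $(\epsilon_{\mathrm{conv}}^\alpha)^2=c_\sigma^2 2^8 p^2\beta\log(p)/n$, the exponent collapses to $2\beta\log p$, so the tail is at most $4p^{2-2\beta}$. Because $p\ge2$ and $\beta>2$ force $p^\beta\ge4$, we obtain $4p^{2-2\beta}\le p^{2-\beta}\to0$ as $p\to\infty$, which is the stated conclusion.

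The only genuinely non-routine ingredient is the single-coordinate sub-exponential tail bound with those explicit constants; but this is precisely Lemma~1 of \citet{Ravikumar2011}, and it transfers without modification here because within a block the observations are i.i.d.\ (sub-)Gaussian. The remaining work --- the reduction $\|\cdot\|_F\le p\|\cdot\|_\infty$, the union bound, and the constant bookkeeping needed to produce $\epsilon_{\mathrm{conv}}^\alpha$ --- is elementary, so no part of the argument should present a serious obstacle.
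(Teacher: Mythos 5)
Your proposal is correct and follows essentially the same route as the paper's own proof: the per-entry sub-exponential tail bound from Lemma 1 of \citet{Ravikumar2011}, a union bound over the $p^2$ entries, and the reduction $\|X\|_F\le p\|X\|_\infty$ to pass to the Frobenius norm. Your final substitution of $\epsilon_{\mathrm{conv}}^{\alpha}$ is in fact carried out more carefully than in the paper (you obtain $4p^{2-2\beta}$ and then deduce the stated $p^{2-\beta}$), but the argument is the same.
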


In the specific setting of $A_{T,k,2}$ the probability we desire to bound is $P[\|W_{k,\bar{\tau}_{k}-\tau_{k}}\|_{F}>\eta_{\min} /3]$, applying Lemma \ref{lemma:tail_bound_hd} gives convergence if $\eta_{\mathrm{min}}>15\times 2^5 p\sqrt{\log (p^{\beta/2})/d_{\min} }$, where we note $\bar{\tau}_{k}-\tau_{k} >d_{\mathrm{min}}/2$ and for Gaussian sampling $c_\sigma=5$. Finally, to ensure that the bound in Lemma \ref{lemma:tail_bound_hd} holds, we must ensure that $\eta_{\min}/3\in (0,2^3c_{\sigma}/p)$. If $d_{\min}=\gamma_{\min}T$ and $\eta_{\min}$ follows the form $c p \sqrt{\log (p^{\beta/2}/\gamma_{\min}T)}$ for some constant $c$, then we require $T> \gamma_{\min}^{-1}(c/c_\sigma)^2 2^{-6} \log(p^{\beta/2})$.

Alternatively, we may study the sampling error in the standard asymptotic setting where $\bar{\tau}_k-\tau_k>p$.

\begin{lemma}{Sample Error Bound in Standard Dimensions}
\label{lemma:tail_bound_sd}

Let $p\le n$ and $X^{(t)}\sim\mathcal{N}(0,\Sigma_0^{(k)})$
for $t=1,\ldots,n$. If the covariance matrix $\Sigma_0^{(k)}$ has maximum eigenvalues
$\phi_{\max}<+\infty$, then for all $t>0$ and $p>2$ we have
\begin{equation}
\label{eq:loose_sd_1}
P\left(\|\hat{W}_{k;n}\|_F \ge 4\phi_{\max}e^{-1/2}\sqrt{p\log n/n} \right) < 2n^{-p/2}\;.
\end{equation}

\begin{proof}
The result is a corollary of bounds derived in \citep{Wainwright2009}, see \ref{proof:tail_bound_sd}.
\end{proof}
\end{lemma}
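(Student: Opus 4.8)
The plan is to read $\hat W_{k;n}$ as the centred sample covariance of $n$ i.i.d.\ $\mathcal N(0,\Sigma_0^{(k)})$ vectors and to obtain the bound as a corollary of the Gaussian sample-covariance concentration of \citet{Wainwright2009}; what then remains is to tune a single deviation parameter and to collect constants in the regime $p\le n$, $p>2$, $n$ large. First I would whiten: writing $X^{(t)}=(\Sigma_0^{(k)})^{1/2}g^{(t)}$ with the $g^{(t)}\sim\mathcal N(0,I_p)$ i.i.d.\ and stacking them as the rows of $G\in\mathbb R^{n\times p}$, one has $\hat W_{k;n}=(\Sigma_0^{(k)})^{1/2}\big(n^{-1}G^\top G-I_p\big)(\Sigma_0^{(k)})^{1/2}$, so that $\vertiii{\hat W_{k;n}}_2\le\phi_{\max}\vertiii{n^{-1}G^\top G-I_p}_2$ and, since $\mathrm{rank}(\hat W_{k;n})\le p$, also $\|\hat W_{k;n}\|_F\le\sqrt p\,\vertiii{\hat W_{k;n}}_2$. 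This reduces everything to controlling $\Delta_n:=n^{-1}G^\top G-I_p$ for an isotropic Gaussian matrix.

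Next I would invoke the standard deviation bound for the extreme singular values of $G$ (a Davidson--Szarek-type estimate, equivalently the $\chi^2$ tail bounds underlying \citet{Wainwright2009}): for every $\delta>0$, with probability at least $1-2e^{-n\delta^2/2}$,
\[
\vertiii{\Delta_n}_2\;\le\;2\big(\sqrt{p/n}+\delta\big)+\big(\sqrt{p/n}+\delta\big)^2 .
\]
Then I would take $\delta=\sqrt{(p\log n)/n}$, which makes the failure probability exactly $2n^{-p/2}$; in that case $\sqrt{p/n}+\delta=\sqrt{p/n}\,\big(1+\sqrt{\log n}\big)\le 2\sqrt{(p\log n)/n}$ once $n\ge e$, and the quadratic term is of strictly lower order (under $p\log n\le n$) and is absorbed into the constant. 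Carrying the factor $\phi_{\max}$ (and the rank factor relating $\|\cdot\|_F$ to $\vertiii{\cdot}_2$) through these inequalities, and trimming the constant with a sharp form of the $\chi^2$ upper tail — which is where the somewhat unusual $e^{-1/2}$ enters — yields a bound of the type claimed, $\|\hat W_{k;n}\|_F\le 4\phi_{\max}e^{-1/2}\sqrt{p\log n/n}$, with probability at least $1-2n^{-p/2}$. The hypotheses $p>2$ and $p\le n$ — the latter also holding in the application, where $n=\bar\tau_k-\tau_k\ge d_{\min}/2$ and $p\le T\delta_T$ for $T$ large — are precisely what places $n$ in the range where the $\big(1+\sqrt{\log n}\big)\le 2\sqrt{\log n}$ step and the absorption of the quadratic remainder are legitimate.

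The step I expect to be the main obstacle is this last piece of bookkeeping: simultaneously matching the precise numerical constant and the tail exponent $p/2$. The $\sqrt{p/n}+\delta$ estimate immediately produces a threshold of order $\phi_{\max}\sqrt{p\log n/n}$ with failure probability $2n^{-p/2}$, but pinning the constant requires committing to one particular $\chi^2$ tail inequality and carefully bounding the cross term $2\sqrt{p/n}\,\delta$ and the square $(\sqrt{p/n}+\delta)^2$ against $\delta^2$ under $p\le n$ and $n$ large, rather than retreating to the cruder two-sided operator-norm statement. The whitening reduction, the norm comparison, and the union over the two tails are otherwise routine.
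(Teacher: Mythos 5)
Your route is the same one the paper takes: whiten, apply the Davidson--Szarek/Wainwright concentration bound for $\vertiii{n^{-1}G^{\top}G-I_{p}}_{2}$ with deviation parameter $a=\sqrt{p\log n/n}$ (which indeed produces the tail $2\exp(-na^{2}/2)=2n^{-p/2}$), and pass to the Frobenius norm via $\|X\|_{F}\le\sqrt{\mathrm{rank}(X)}\,\vertiii{X}_{2}$. So there is no methodological divergence to report. The problem is that the step you defer as ``bookkeeping'' is not bookkeeping, and it does not close. Since $\|\hat{W}_{k;n}\|_{F}\le\sqrt{p}\,\vertiii{\hat{W}_{k;n}}_{2}$ holds almost surely, the containment runs $\{\|\hat{W}_{k;n}\|_{F}\ge t\}\subseteq\{\vertiii{\hat{W}_{k;n}}_{2}\ge t/\sqrt{p}\}$, so the Frobenius threshold must be $\sqrt{p}$ \emph{times} the operator-norm threshold $\phi_{\max}\delta(n,p,a)$, not the same order. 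Even granting your estimate $\delta(n,p,a)\asymp\sqrt{p\log n/n}$, this delivers a threshold of order $\phi_{\max}\,p\sqrt{\log n/n}=\phi_{\max}\sqrt{p}\cdot\sqrt{p\log n/n}$, which exceeds the stated $4\phi_{\max}e^{-1/2}\sqrt{p\log n/n}$ by a factor of $\sqrt{p}$ that no absolute constant can absorb. (For what it is worth, the paper's own write-up divides by $\sqrt{p}$ at exactly this point, which is the wrong direction of the inequality, so the gap is inherited from the source rather than invented by you; but a referee of your proof would still flag it.)

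Two smaller points. Your absorption of the quadratic term $(\sqrt{p/n}+a)^{2}$ relies on $p\log n\le n$, which is not among the hypotheses --- only $p\le n$ and $p>2$ are assumed, and for $p$ comparable to $n$ the square dominates the linear term. The paper handles this differently: it bounds $\sqrt{p\log n/n}\le\sqrt{p/e}$ (the function $\log n/n$ is maximised at $n=e$) and uses $p>2\ (>e-\text{ish})$ to fold the quadratic contribution into $4pe^{-1/2}\sqrt{\log n/n}$ --- at the price of a further $\sqrt{p}$ inside $\delta$ itself. This is also where the factor $e^{-1/2}$ actually originates; it is not a sharpened $\chi^{2}$ tail constant as you conjecture. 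The honest conclusion of this line of argument, under only $p\le n$ and $p>2$, is a bound of the form $P(\|\hat{W}_{k;n}\|_{F}\ge C\phi_{\max}\,p^{3/2}\sqrt{\log n/n})\le 2n^{-p/2}$ (or $p\sqrt{\log n/n}$ if one additionally assumes $p\log n\le n$), and the downstream condition on $\eta_{\min}$ in Theorem~\ref{thm:cp_consistency_sd} would have to be rescaled accordingly.
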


In the context of $A_{T,k,2}$, Lemma \ref{lemma:tail_bound_sd} holds with $\eta_{\min}\ge 12\phi_{\max}e^{-1/2}\sqrt{p \log n/n}$ and $n=\gamma_{\min}T$. While the probability bound in the Lemma is not sharp, it does relate the convergence rate to the dimensionality $p$, which in this context is fixed as $T\rightarrow \infty$. 

Finally, let us turn to $A_{T,k,3}$. Recall $P(A_{T,k,3}):=P(A_{T,k}\cap C_{T}\cap E_{3}):=P(A_{T,k}\cap C_{T}\cap\{\|\sum_{t=\hat{\tau}_{k}}^{\tau_{k}-1}\Psi^{(t)}\|_{F}\ge \|R_{1}\|_{F}/3\})$.
Given that $\|R_{1}\|_{F}\ge(\tau_{k}-\hat{\tau}_{k})\eta_{\min}$
with probability 1, an upper bound on $P[A_{T,k,3}]$ can be found
using the same concentration bounds (Lemmas \ref{lemma:tail_bound_hd}, \ref{lemma:tail_bound_sd}) as for $A_{T,k,2}$. The only difference is that we need to replace the integration interval $n$ with $T\delta_T$. Noting that $T\delta_{T}<\tau_{k}-\hat{\tau}_{k}\le d_{\min}/2$, the overall bound will be dominated by the concentration results requiring $n>T\delta_T$.

\subsection{Bounding the Bad Cases}

In order to complete the proof, we need to demonstrate that $P[A_{T,k}\cap C_{T}^{c}]\rightarrow0$.
The argument below follows that of \citet{Harchaoui2010}, whereby
the bad case is split into several events:
\begin{eqnarray*}
D_{T}^{(l)}: & = & \left\{ \exists k\in[K],\;\hat{\tau}_{k}\le\tau_{k-1}\right\} \cap C_{T}^{c},\\
D_{T}^{(m)}: & = & \left\{ \forall k\in[K],\;\tau_{k-1}<\hat{\tau}_{k}<\tau_{k+1}\right\} \cap C_{T}^{c},\\
D_{T}^{(r)}: & = & \left\{ \exists k\in[K],\;\hat{\tau}_{k}\ge\tau_{k+1}\right\} \cap C_{T}^{c},
\end{eqnarray*}
where $C_{T}^{c}=\{\max_{k\in[K]}|\hat{\tau}_{k}-\tau_{k}|\ge d_{\min}/2\}$
is the compliment of the good event. The events above correspond to
estimating a changepoint; a) before the previous true changepoint
($D_{T}^{(l)}$); b) between the previous and next true changepoint
($D_{T}^{(m)}$), and c) after the next true changepoint ($D_{T}^{(r)}$).
The events $D_{T}^{(l)}$ and $D_{T}^{(r)}$ appear to be particularly
bad as the estimated changepoint is very far from the truth, due to
symmetry we can bound these events in a similar manner. Focussing
on the middle term $P[A_{T,k}\cap D_{T}^{(m)}]$, let us again assume
$\hat{\tau}_{k}<\tau_{k}$ , the reverse arguments hold by symmetry. 

\begin{lemma}{Upper bound for $P[A_{T,k}\cap D_{T}^{(m)}]$ }
\label{lemma:upper_bound_a_tk_cap_D_tm}

The probability of the intersection of $A_{T,k}$ and $D_{T}^{(m)}$
can be bounded from above by considering the events 
\begin{eqnarray}
E_{k}^{'} & := & \{(\hat{\tau}_{k+1}-\tau_{k})\ge d_{\min}/2\}\;,\\
E_{k}^{''} & := & \{(\tau_{k}-\hat{\tau}_{k})\ge d_{\min}/2\}\;.\label{eq:Events_compliment_bound_k+1}
\end{eqnarray}
In particular, one can demonstrate that:
\begin{equation}
P[A_{T,k}\cap D_{T}^{(m)}]\le P[A_{T,k}\cap E_{k}^{'}\cap D_{T}^{(m)}]+\sum_{j=k+1}^{K}P[E_{j}^{''}\cap E_{j}^{'}\cap D_{T}^{(m)}]\;.\label{eq:compliment_middle_split}
\end{equation}
\end{lemma}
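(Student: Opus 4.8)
The plan is to fix $k$ and, without loss of generality, argue on the sub-event $\{\hat{\tau}_{k}<\tau_{k}\}$; the complementary case $\{\hat{\tau}_{k}>\tau_{k}\}$ is handled by the mirror-image argument, which propagates the large deviation towards smaller indices and produces events analogous to $E_{j}',E_{j}''$ for $j<k$ whose probabilities coincide with the stated ones by the symmetry of the construction. The engine of the proof is a purely deterministic telescoping inclusion built on two geometric facts: on $D_{T}^{(m)}$ every estimate satisfies $\tau_{j-1}<\hat{\tau}_{j}<\tau_{j+1}$, and since $\hat{K}=K$ (Assumption \ref{ass:appropriate_reg}) the estimated and true changepoints interleave; and the true spacing is at least $d_{\min}$. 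If the $k$-th estimate lies far to the left of $\tau_{k}$, then either its right neighbour $\hat{\tau}_{k+1}$ is already dragged away from $\tau_{k}$ (event $E_{k}'$), or $\hat{\tau}_{k+1}$ is pinned near $\tau_{k}$, which forces the estimate at index $k+1$ to lie far to the left of $\tau_{k+1}$, and we iterate.

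Concretely, I would set $G_{k}:=A_{T,k}\cap\{\hat{\tau}_{k}<\tau_{k}\}\cap D_{T}^{(m)}$ and split on $E_{k}'$. The piece $G_{k}\cap E_{k}'$ is contained in $A_{T,k}\cap E_{k}'\cap D_{T}^{(m)}$, the first term of the bound. On $G_{k}\cap (E_{k}')^{c}$ one has $\hat{\tau}_{k+1}-\tau_{k}<d_{\min}/2$ from $(E_{k}')^{c}$, $\hat{\tau}_{k+1}>\tau_{k}$ from the index-$(k+1)$ constraint of $D_{T}^{(m)}$, and $\tau_{k+1}-\tau_{k}\ge d_{\min}$ from the definition of $d_{\min}$; combining these gives $\tau_{k}<\hat{\tau}_{k+1}<\tau_{k}+d_{\min}/2\le\tau_{k+1}-d_{\min}/2$, hence $\hat{\tau}_{k+1}<\tau_{k+1}$ and $\tau_{k+1}-\hat{\tau}_{k+1}>d_{\min}/2$, i.e. $G_{k}\cap (E_{k}')^{c}\subseteq E_{k+1}''\cap\{\hat{\tau}_{k+1}<\tau_{k+1}\}\cap D_{T}^{(m)}=:H_{k+1}$. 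Note that $A_{T,k}$ is discarded at this step, which is precisely why it appears only in the first term of the claimed bound.

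I would then iterate the same split: for $j\ge k+1$, decompose $H_{j}:=E_{j}''\cap\{\hat{\tau}_{j}<\tau_{j}\}\cap D_{T}^{(m)}$ on $E_{j}'$, getting $H_{j}\cap E_{j}'\subseteq E_{j}''\cap E_{j}'\cap D_{T}^{(m)}$ (the $j$-th summand) and, by the identical geometric computation, $H_{j}\cap (E_{j}')^{c}\subseteq H_{j+1}$ for $j<K$. The recursion is finite and closes at $j=K$: there $E_{K}'=\{\hat{\tau}_{K+1}-\tau_{K}\ge d_{\min}/2\}$ holds deterministically, since $\hat{\tau}_{K+1}=T+1=\tau_{K+1}$ and $\tau_{K+1}-\tau_{K}\ge d_{\min}$, so $H_{K}\cap (E_{K}')^{c}=\emptyset$. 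Chaining the inclusions yields $G_{k}\subseteq(A_{T,k}\cap E_{k}'\cap D_{T}^{(m)})\cup\bigcup_{j=k+1}^{K}(E_{j}''\cap E_{j}'\cap D_{T}^{(m)})$, and the lemma follows by monotonicity and the union bound.

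The main obstacle here is the deterministic bookkeeping rather than any concentration estimate: one must check that on $D_{T}^{(m)}$ the failure of $E_{j}'$ propagates the deviation to index $j+1$ on the correct side ($\hat{\tau}_{j+1}<\tau_{j+1}$), using only interleaving and the spacing bound $\tau_{j+1}-\tau_{j}\ge d_{\min}$; that the recursion terminates and its right boundary is sealed by the artificial separator $\hat{\tau}_{K+1}=T+1$; and that the chain of inclusions is tight enough for $A_{T,k}$ to be dropped after the first split, which is what gives the right-hand side its asymmetric form.
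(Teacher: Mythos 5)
Your proposal is correct and follows essentially the same route as the paper: the paper's proof also cascades over changepoints $j=k+1,\ldots,K$ using the deterministic dichotomy that $\tau_{j+1}-\tau_j\ge d_{\min}$ forces $E_j'\cup E_{j+1}''$ to occur (equivalently, your inclusion $(E_j')^c\cap D_T^{(m)}\subseteq E_{j+1}''$), dropping $A_{T,k}$ after the first split and applying the union bound. Your version is if anything slightly more careful, making explicit the sign of the propagated deviation and the termination at $j=K$ via $\hat{\tau}_{K+1}=T+1$, which the paper leaves implicit.
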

\begin{proof}
The result follows from expanding events based on neighbouring changepoints
(see Appendix \ref{subsec:upper_bound_DtM} for detail). \end{proof}

Let us first assess $P(A_{T,k}\cap D_{T}^{(m)}\cap E_{k}^{'})$, and
consider the stationarity conditions (\ref{eq:stationarity_diff})
with start and end points set as $l=\hat{\tau}$, $l=\tau_{k}$ and $l=\hat{\tau}_{k},l=\tau_{k+1}$
. We respectively obtain:
\begin{equation}
\label{eq:bound_cp_left_side_k+1_k}
|\tau_{k}-\hat{\tau}_{k}|\|\Sigma^{(k)}-\hat{\Sigma}^{(k+1)}\|_{F}\le2\lambda_{2}+\lambda_{1}\sqrt{p(p-1)}(\tau_{k}-\hat{\tau}_{k})+\|\sum_{t=\hat{\tau}_{k}}^{\tau_{k}-1}\Psi^{(t)}\|_{F}\;
\end{equation}
and
\begin{equation}
\label{eq:bound_cp_left_side}
|\tau_{k}-\hat{\tau}_{k+1}|\|\Sigma^{(k+1)}-\hat{\Sigma}^{(k+1)}\|_{F}\le2\lambda_{2}+\lambda_{1}\sqrt{p(p-1)}(\hat{\tau}_{k+1}-\tau_{k})+\|\sum_{t=\tau_{k}}^{\hat{\tau}_{k+1}-1}\Psi^{(t)}\|_{F}\;.
\end{equation}
The next step is to define an event that can bound $P(A_{T,k}\cap E_{k}^{'}\cap D_{T}^{(m)})$.
Using the triangle inequality we bound $\|\Sigma^{(k+1)}-\Sigma^{(k)}\|_{F}$
conditional on $E_{k}^{'}:=\{(\hat{\tau}_{k+1}-\tau_{k})\ge d_{\min}/2 \}$
and $A_{T,k}:=\{|\tau_{k}-\hat{\tau}_{k}|>T\delta_{T}\}$. Specifically, we construct the event
\begin{eqnarray}
H_{T}^{\Sigma} & := & \{ \|\Sigma_{k+1}-\Sigma_{k}\|_{F}\le 2\lambda_{1}\sqrt{p(p-1)}+2\lambda_{2}( (T\delta_{T})^{-1}+2/d_{\min} ) \nonumber \\
 &  & \quad +\|W_{k;\tau_{k}-\hat{\tau}_{k}}\|_F + \|W_{k+1;\hat{\tau}_{k+1}-\tau_{k}}\|_F \}\;, \label{eq:H_sigma}
\end{eqnarray}
which bounds the first term of (\ref{eq:compliment_middle_split})
such that $P(A_{T,k}\cap E_{k}^{'}\cap D_{T}^{(m)}) \le P(H_{T}^{\Sigma}\cap\{\tau_{k}-\hat{\tau}_{k}\ge T\delta_{T}\}\cap E_{k}^{'})$.
Splitting the intersection of events we now have five terms to consider
\begin{eqnarray*}
 &  & P(A_{T,k}\cap E_{k}^{'}\cap D_{T}^{(m)})\\
 & \le & P(\lambda_{1}\sqrt{p(p-1)}\ge \eta_{\min}/10 ) + P(\lambda_{2}/T\delta_{T} \ge \eta_{\min}/10 ) + P(\lambda_{2}/d_{\min} \ge  \eta_{\min}/20) \\
 &  & +P( \|W_{k;\tau_{k}-\hat{\tau}_{k}} \|_F \ge \eta_{\min}/5 \} \cap \{ \tau_{k}-\hat{\tau}_{k} \ge T\delta_{T} \} )\\
 &  & +P( \{ \|W_{k+1;\hat{\tau}_{k+1}-\tau_{k}}\|_F \ge \eta_{\min}/5  \} \cap  \{ \hat{\tau}_{k+1}-\tau_{k}\ge d_{\min}/2 \} )\;.
\end{eqnarray*}
The stochastic error terms (containing $W_{k;\tau_k-\hat{tau}_k}$)
can then be shown to converge similarly to $P(A_{T,k}\cap C_{T})$
c.f. Eq. (\ref{eq:concentration_bound_ATk2}). Again, it is worth
noting that the term involving $T\delta_{T}$ will be slowest to converge,
as $d_{\min}=\gamma_{\min}T>\delta_{T}T$ for large $T$. The first
three terms are bounded through the assumptions on $d_{\min},\lambda_{1},\lambda_{2}$,
and $\delta_{T}$ as required by the theorem (and enforce a similar
requirement to those used to bound $P(A_{T,k,1})$ in Eq. \ref{eq:A_T_k_1_convergence_criteria}).
The other terms in (\ref{eq:compliment_middle_split}), i.e. $\sum_{j=k+1}^{K}P[E_{j}^{''}\cap E_{j}^{'}\cap D_{T}^{(m)}]$
can be similarly bounded. Instead of using exactly the event $H_{T}^{\Sigma}$
one simply replaces the term $1/T\delta_{T}$ in (\ref{eq:H_sigma})
with $2/d_{\min}$.

Now let us consider the events $D_{T}^{(l)}:=\left\{ \exists k\in[K],\;\hat{\tau}_{k}\le\tau_{k-1}\right\} \cap C_{T}^{c}$.
The final step of the proof is to show that the bound on $A_{T,k}\cap D_{T}^{(l)}$,
and similarly $A_{T,k}\cap D_{T}^{(r)}$ tends to zero:

\begin{lemma}The probability of $D_{T}^{(l)}$ is bounded by 
\[
P(D_{T}^{(l)}) \le 2^{K}\sum_{k=1}^{K-1}\sum_{l\ge k}^{K-1}P(E_{l}^{''}\cap E_{l}^{'})+2^{K}P(E_{K}^{'})\;.
\]
\label{lemma:union_bound_left}
\begin{proof}
This is based on a combinatorial argument for the events that can
be considered on addition of each estimated changepoint. For details
see Appendix \ref{subsec:proof_lemma_union_bound_left}.\end{proof}
\end{lemma}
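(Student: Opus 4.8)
The plan is to split $D_T^{(l)}$ into the ``local'' straddling events $E_l''\cap E_l'$ (plus a terminal boundary event at index $K$) by a purely \emph{deterministic} combinatorial argument, and only then pass to probabilities via the union bound; all of the stochastic content is thereby outsourced to the bounds on $E_l''\cap E_l'$ already constructed through the stationarity inequality (\ref{eq:stationarity_diff}), the event $H_T^\Sigma$ in (\ref{eq:H_sigma}), and the concentration Lemmas \ref{lemma:tail_bound_hd}--\ref{lemma:tail_bound_sd}.

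First I would note that $\{\hat\tau_k\le\tau_{k-1}\}$ already forces $|\hat\tau_k-\tau_k|\ge\tau_k-\tau_{k-1}\ge d_{\min}>d_{\min}/2$, so $\{\hat\tau_k\le\tau_{k-1}\}\subseteq C_T^c$ and hence $D_T^{(l)}=\bigcup_{k}\{\hat\tau_k\le\tau_{k-1}\}$. The index $k=1$ gives $\hat\tau_1\le\tau_0=1$, which is impossible since changepoints require $t\ge2$, so the union effectively runs over $k\in\{2,\dots,K\}$; after aligning indices with the ``right endpoint'' $\hat\tau_k$ of true block $k$ this matches the outer sum over $k\in[K-1]$ in the statement. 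By the union bound it then suffices to control $P(\hat\tau_k\le\tau_{k-1})$ for each such $k$.

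The deterministic core is an induction that walks rightward from index $k$. On $\{\hat\tau_k\le\tau_{k-1}\}$ we have $\tau_k-\hat\tau_k\ge d_{\min}$, so $E_k''$ holds. Then either $\hat\tau_{k+1}-\tau_k\ge d_{\min}/2$, so $E_k'\cap E_k''$ holds and the walk stops at $l=k$; or $\hat\tau_{k+1}-\tau_k<d_{\min}/2$, and since $\tau_{k+1}-\tau_k\ge d_{\min}$ and the $\hat\tau_j$ are ordered, $\tau_{k+1}-\hat\tau_{k+1}>d_{\min}/2$, i.e. $E_{k+1}''$ holds and the same dichotomy is applied at $k+1$. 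Iterating over at most $K$ indices, the walk terminates either at some $l\in\{k,\dots,K-1\}$ with $E_l''\cap E_l'$, or it reaches $l=K$, where $E_K''$ holds and the last-block endpoint comparison yields the terminal event (written $E_K'$ in the statement, to be read together with its accompanying stationarity-induced stochastic condition, exactly as $E_K'$ was used in the $D_T^{(m)}$ analysis). Since the resolution of the dichotomy at each index is random, rather than tracking the precise stopping index I would union over all $\le2^K$ left/right patterns of the $\hat\tau_j$ relative to the $\tau_j$; for each fixed pattern the walk is deterministic and pins down a single $l$ (or the terminal case), and summing over the $\le2^K$ patterns and the starting index $k$ produces the prefactor $2^K$ in front of $\sum_{k=1}^{K-1}\sum_{l\ge k}^{K-1}P(E_l''\cap E_l')$ together with the $2^KP(E_K')$ term. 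The event $D_T^{(r)}$ is then bounded by the mirror-image argument (reflect time $t\mapsto T+1-t$, swapping the roles of $E_l'$ and $E_l''$).

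The hard part will be the combinatorial propagation step: making rigorous that a leftward overshoot past $\tau_{k-1}$ cannot be ``undone'' further right without some true changepoint being straddled, and doing the index bookkeeping so that exactly the range $l\in\{k,\dots,K-1\}$ and a single terminal event at $K$ appear. The clean way to see why this must happen is the pigeonhole count $\hat\tau_1<\cdots<\hat\tau_k\le\tau_{k-1}$, which leaves at most $K-k$ estimated changepoints to the right of $\tau_{k-1}$ to cover the $K-k+1$ true changepoints $\tau_k,\dots,\tau_K$; the one-changepoint deficit is what the induction converts into a concrete straddled index. A secondary, purely notational obstacle is to confirm that each $E_l''\cap E_l'$ produced here is literally the event bounded earlier, so that the final bound is genuinely $o(1)$ under Assumption \ref{ass:appropriate_reg} and the jump-size conditions of Theorems \ref{thm:cp_consistency_sd}--\ref{thm:cp_consistency_hd}.
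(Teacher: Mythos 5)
Your proposal is correct and follows essentially the same route as the paper: decompose $D_T^{(l)}$ into the straddling events $E_l''\cap E_l'$ with a $2^K$ combinatorial prefactor coming from the left/right configurations of the estimated changepoints, treat the terminal index $K$ separately (where the event $\{\hat\tau_{K+1}-\tau_K\ge d_{\min}/2\}$ is sure because $\hat\tau_{K+1}=T+1$), and finish with the union bound. Your inductive ``walk'' is just an explicit derivation of the set inclusion (\ref{eq:set_expansion_left_compliment}) that the paper asserts by citing \citet{Harchaoui2010} and \citet{Kolar2012}, and both your pigeonhole remark and the observation that the $k=1$ term is vacuous mirror comments already made (in a footnote) in the paper's own proof.
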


In order to bound the above probabilities we relate the events $E_{l}^{''}$and
$E_{l}^{'}$ to the stationarity conditions as before (via Eqs. \ref{eq:bound_cp_left_side_k+1_k},
\ref{eq:bound_cp_left_side}). Setting $k=l$ and invoking the triangle
inequality gives us
\begin{eqnarray*}
\{\|\Sigma_{l+1}-\Sigma_{l}\|_{F} & \le & 2\lambda_{1}\sqrt{p(p-1)}+\overset{A}{\overbrace{2\lambda_{2}(|\tau_{l}-\hat{\tau}_{l}|^{-1}+|\hat{\tau}_{l+1}-\tau_{l}|^{-1}) }} \\
 &  & + \| W_{l;\tau_{l}-\hat{\tau}_{l}}\|_F+ \|W_{l+1;\hat{\tau}_{l+1}-\tau_{l}}\|_F \}\;.
\end{eqnarray*}
Conditioning on the event $E_{l}^{''}\cap E_{l}^{'}$ implies that
$A=8\lambda_{2}/d_{\min}$. We can thus write
\begin{eqnarray*}
P(E_{l}^{''}\cap E_{l}^{'}) & \le & P(\eta_{\min} \le 8\lambda_{1}\sqrt{p(p-1)}) +P(\eta_{\min}\le 32 \lambda_{2}/d_{\min} )\\
 &  & +P(\{ \|W_{l;\tau_{l}-\hat{\tau}_{l}}\|_F \ge \eta_{\min}/4 \}\cap \{ \tau_{l}-\hat{\tau}_{l}\ge d_{\min}/2 \} )\\
 &  & +P ( \{ \|W_{l+1;\hat{\tau}_{l+1}-\tau_{l}} \|_F \ge \eta_{\min}/4\} \cap \{ \hat{\tau}_{l+1}-\tau_{l} \ge d_{\min}/2 \} )\;.
\end{eqnarray*}
Finally, the term corresponding to the last changepoint can be bounded
by noting that when $k=K$ we have $A=6\lambda_{2}/d_{\min}$.
\begin{align}
P(E_{K}^{''}) & \le P(\eta_{\min}\le 8\lambda_{1}\sqrt{p(p-1)} )+P(\eta_{\min}\le 24\lambda_{2}/d_{\min} ) \nonumber \\
 & +P( \{ \|W_{K;\tau_{K}-\hat{\tau}_{K}}\|_F \ge \eta_{\min}/4 \} \cap \{\tau_{K}-\hat{\tau}_{K}\ge d_{\min}/2 \}) \nonumber \\
 & P( \|W_{K+1;T+1-\tau_{K}}\|_F \ge \eta_{\min}/4 )\;.\label{eq:E_K''}
\end{align}

\subsection{Summary}

The bounds derived above demonstrate that $P(A_{T,k}) \rightarrow 0$
since $P(A_{T,k}\cap C_{T})\rightarrow 0$ and $P(A_{T,k}\cap C_{T}^{c})\rightarrow0$. However, to achieve these bounds, the regularisers must be set appropriately. The event $E_{l}^{''}\cap E_{l}^{'}$ establishes
a minimal condition on $T$ in conjunction with $\eta_{\min}$ and the regularisers, such that $\eta_{\min}d_{\min}/\lambda_{2}>32$
and $\eta_{\min}/\lambda_{1}\sqrt{p(p-1)}>8$. A final condition for
$A_{T,k,1}$ requires $\eta_{\min}T\delta_{T}/\lambda_{2}>3$. Once
$T$ is large enough to satisfy these conditions, the probabilistic
bound is determined either by the smallest block size $d_{\min}=\gamma_{\min}T$
or by the minimum error $T\delta_{T}$. Let $k_\infty = \arg\max_k\{\max_{ii}\Sigma_{ii}^{(k)} \}$ select the block which results in the largest expected covariance error. Summing the probabilities,
one obtains the upper bound:
\begin{align*}
P[|\tau_{k}-\hat{\tau}_{k}|\ge T\delta_{T}]\le & 2\times2^{K}\big((K-1)^{2}+1\big)P(\|W_{k_\infty;d_{\min}/2}\|_F \ge \eta_{\min}/4 )\\
 & + 2 P(\|W_{k_\infty;T\delta_T}\|_F \ge \eta_{\min}/5) \\
 & +2 P(\|W_{k_\infty;T\delta_T}\|_F \ge \eta_{\min}/3) \;,
\end{align*}
where the different rows correspond to events; top) $D_{T}^{(l)}$
and $D_{T}^{(r)}$; middle) $D_{T}^{(m)}$; bottom) $A_{T,k,2}$
and $A_{T,k,3}$. Since $\delta_{T}T<\gamma_{\min}T$ the above bounds
will be dominated by errors $W_{k_\infty;T\delta_T}$ integrated over the relatively small distance $T\delta_T$. A suitable overall bound on
the probability is
\begin{align*}
P(\max_{k\in[K]}|\tau_{k}-\hat{\tau}_{k}|\ge T\delta_{T}) & \le K^{3}2^{K+1} P(\|W_{\infty;d_{\min}/2}\|_F \ge \eta_{\min}/4) \\
& \quad + 4K P(\|W_{\infty;T\delta_T}\|_F \ge \eta_{\min}/5)\\
 & \le C_{K}P(\|W_{\infty;T\delta_T}\|_F \ge \eta_{\min}/5)\;,
\end{align*}
where $C_{K}=K(K^{2}2^{K+1}+4)$. We thus arrive at the result of
Theorem \ref{thm:cp_consistency_sd}.

\hfill\ensuremath{\square}

\subsection{Lemma 1. High-Dimensional Bound on Empirical Covariance Error}

\label{proof:empirical_covariance_frobenius_bound} 

\begin{lemma*}

Let $W_k^{(n)}=[n^{-1}\sum_{t=1}^n X^{(t)}(X^{(t)})^{\top} ] - \Sigma_{0}^{(k)}$, then for any $\epsilon\in(0,2^3c_{\sigma}/p)$, with Gaussian noise $X^{(t)}\sim \mathcal N(0,\Sigma^{(k)})$, the error is bounded according to
\[
P(\|\hat{W}_{k;n}\|_{F}>\epsilon)\le4p^{2}\exp\left(-\frac{n\epsilon^{2}}{2^{7}c_{\sigma}^{2}p^{2}}\right)\;,
\]
where $c_\sigma = (1+4\sigma^2)\max_{ii}\{\Sigma^{(k)}_{0;ii}\}$.
Furthermore, if $\epsilon>
\epsilon_{\mathrm{conv}}^{\alpha}:=c_\sigma 2^{4}\sqrt{2}p\sqrt{\log(p^{\alpha/2})/n}
$ and $\alpha>2$ then 

\[
P(\|\hat{W}_{k;n}\|_{F}>\epsilon)\le p^{(2-\alpha)} \rightarrow 0\;.
\]

\begin{proof}

From \citet{Ravikumar2011} Lemma 1, for $X^{(t)}$ with sub-Gaussian tails (parameter
$\sigma$) we have

\[
P(|\hat{W}_{k;n;i,j}|>\delta)\le4\exp\left(-\frac{n\delta^{2}}{2^{7}c^{2}}\right)\;,
\]
where $c=(1+4\sigma^{2})\max_{ii}\{\Sigma_{0;ii}\}$ for all $\delta\in(0,2^{3}c)$.
Take the union bound to obtain max norm
\[
P(\|\hat{W}_{k;n}\|_{\infty}>\delta)\le4p^{2}\exp\left(-\frac{n\delta^{2}}{2^{7}c^{2}}\right)\;.
\]
Now use the fact that $p^{2}\|X\|_{\infty}\ge p\|X\|_{F}$ to control
the event relating to Frobenius norm. Note that if $P(\|X\|_{\infty}>\delta/p)=A$
and $P(\|X\|_{F}>\delta)=B$, then $A\ge B$ and hence we can use the larger probability $A$
to bound the Frobenius event
\[
P(\|\hat{W}_{k;n}\|_{F})>\delta)\le4p^{2}\exp\left(-\frac{n\delta^{2}}{p^2 2^{7}c^{2}}\right)
\]
The bound converges to zero for all $\epsilon >\epsilon_{\mathrm{conv}}$
where 
\begin{align*}
\epsilon_{\mathrm{conv}}^{2} & =\frac{p^{2}2^{2}2^{7}c^{2}}{n}\log(4p^{2})
\end{align*}
giving
$
\epsilon_{\mathrm{conv}}:=c2^{4}\sqrt{2}p\sqrt{\log(p)/n}\;.
$

\end{proof}
\end{lemma*}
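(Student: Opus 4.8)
The plan is to obtain the Frobenius-norm tail bound in three moves: (i) invoke an off-the-shelf elementwise concentration inequality for sample covariances of sub-Gaussian vectors, (ii) pass to the max-entry norm by a union bound over the $p^2$ entries, and (iii) convert the max-entry control into Frobenius control by a deterministic norm comparison for $p\times p$ matrices.

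First I would recall the elementwise bound of \citet{Ravikumar2011} (their Lemma 1): since each $X_i^{(t)}/(\Sigma_{0;ii}^{(k)})^{1/2}$ is sub-Gaussian with parameter $\sigma$, the centred empirical second moments concentrate as
\[
P(|\hat W_{k;n;ij}| > \delta) \le 4\exp\!\left(-\frac{n\delta^2}{2^7 c_\sigma^2}\right),\qquad \delta\in(0,2^3 c_\sigma),
\]
with $c_\sigma = (1+4\sigma^2)\max_{ii}\{\Sigma_{0;ii}^{(k)}\}$ (in the Gaussian case $\sigma$ is a universal constant, giving $c_\sigma = 5\max_{ii}\Sigma_{0;ii}^{(k)}$). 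A union bound over the at most $p^2$ entries of $\hat W_{k;n}$ then gives $P(\|\hat W_{k;n}\|_\infty > \delta) \le 4p^2\exp(-n\delta^2/(2^7 c_\sigma^2))$.

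The second step is the deterministic inequality $\|X\|_F \le p\|X\|_\infty$ for any $p\times p$ matrix, since $\|X\|_F^2 = \sum_{ij}X_{ij}^2 \le p^2\|X\|_\infty^2$. Hence $\{\|\hat W_{k;n}\|_F > \epsilon\} \subseteq \{\|\hat W_{k;n}\|_\infty > \epsilon/p\}$, and applying the previous display with $\delta = \epsilon/p$ yields
\[
P(\|\hat W_{k;n}\|_F > \epsilon) \le 4p^2\exp\!\left(-\frac{n\epsilon^2}{2^7 c_\sigma^2 p^2}\right),
\]
valid whenever $\epsilon/p\in(0,2^3c_\sigma)$, i.e. $\epsilon<2^3c_\sigma p$. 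For the convergence statement I would set the right-hand side $\le p^{2-\beta}$; this holds once $n\epsilon^2/(2^7c_\sigma^2 p^2)\ge \beta\log p + \log 4$, and solving for $\epsilon$ (absorbing the $\log 4$ into constants) produces the threshold $\epsilon_{\mathrm{conv}}^{\beta} \asymp c_\sigma\, p\sqrt{\log(p^{\beta/2})/n}$, after which $p^{2-\beta}\to 0$ since $\beta>2$.

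I do not expect a serious obstacle: the heavy lifting is delegated to the sub-Gaussian moment bound, and what remains is a union bound and a norm inequality. The one point that needs care is the admissible range of $\epsilon$ — the $1/p$ rescaling in step (iii) turns the hypothesis $\delta<2^3c_\sigma$ of the borrowed lemma into $\epsilon<2^3c_\sigma p$, which is exactly the constraint in the statement. This matters downstream: in Theorems \ref{thm:cp_consistency_hd} and \ref{thm:bound_on_estimation error} it is precisely this upper restriction on $\eta_{\min}$ (equivalently on $\epsilon$) that forces a lower bound on the effective sample size $T\delta_T = \Omega(\log p^{\beta/2})$, so it should be tracked explicitly rather than discarded.
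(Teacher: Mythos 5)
Your proposal is correct and follows essentially the same route as the paper's own proof: the elementwise sub-Gaussian bound from Ravikumar et al., a union bound over the $p^2$ entries, and the deterministic comparison $\|X\|_F\le p\|X\|_\infty$ to pass from the max-norm event to the Frobenius event, with the same resulting threshold $\epsilon_{\mathrm{conv}}\asymp c_\sigma p\sqrt{\log(p^{\beta/2})/n}$. Your tracking of the admissible range as $\epsilon<2^3c_\sigma p$ is in fact the correct consequence of the $\delta=\epsilon/p$ substitution (the appendix statement's ``$2^3c_\sigma/p$'' appears to be a typo, since the main-text version of the lemma states the range as $(0,2^3c_\sigma p)$).
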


\subsection{Proof of Lemma . Standard-dimensional Bounds for Empirical Covariance Error}

\label{proof:tail_bound_sd}

\begin{lemma*}{Concentration of spectral and Frobenius norm}

Let $p\le n$ and $X^{(t)}\sim\mathcal{N}(0,\Sigma_0^{(k)})$
for $t=1,\ldots,n$. If the covariance matrix $\Sigma$ has maximum eigenvalues
$\phi_{\max}<+\infty$, then for all $a>0$
\begin{equation}
\label{eq:wainwright_concentration}
P(\vertiii{\hat{W}_{k;n}}_{2}\ge \phi_{\mathrm{max}}\delta(n,p,a))\le 2\exp(-n a^{2}/2)\;,
\end{equation}
where $\delta(n,p,a):=2((p/n)^{1/2}+a)+((p/n)^{1/2}+a)^{2}$. Furthermore, with $\delta(n,p,\sqrt{p \log(n)/n})$ and $p>2$ we have

\begin{equation}
\label{eq:loose_sd}
P\left(\|\hat{W}_{k;n}\|_F \ge \frac{4\phi_{\max}}{\sqrt{e}}\sqrt{\frac{p\log n}{n}} \right) < 2n^{-p/2}\;.
\end{equation}

\begin{proof}
The proof of Eq. \ref{eq:wainwright_concentration} is given in Lemma 9 \citep{Wainwright2009} and is based on a result for Gaussian ensembles from \citet{Davidson2001}.
For the specific bound in (\ref{eq:loose_sd}) set $a=\sqrt{p \log (n)/n}$ such that
\begin{align*}
\delta(n,p,\sqrt{p \log (n)/n}) & = \left(\sqrt{\frac{p}{n}}+\sqrt{\frac{p \log (n)}{n}} \right)\left( 2+\sqrt{\frac{p}{n}} + \sqrt{\frac{p \log (n)}{n}} \right) \; \\
& <\sqrt{\frac{p \log(n)}{n}}(2+2\sqrt{\frac{p\log (n)}{n}})\;.
\end{align*}
Noting that $\sqrt{p \log (n)/n}$ is maximised for $n=e$, we obtain
\begin{align*}
\delta(n,p,\sqrt{p \log (n)/n})& <\sqrt{\frac{p \log n}{n}}(2(1+\sqrt{\frac{p}{e}})) \\
& <\frac{4p}{\sqrt{e}}\sqrt{\frac{\log n}{n}}\;,
\end{align*}
where the last inequality holds for $p>2$. Note that $\|X\|_F\le \sqrt{\mathrm{rank}(X)}\vertiii{X}_2$. Given we are in the setting $p<n$ so the matrix is full rank, we thus obtain the stated result (\ref{eq:wainwright_concentration})
\[
P(\|\hat{W}_{k;n}\|_F \ge \phi_{\mathrm{max}}\delta(n,p,\sqrt{p \log n /n})/\sqrt{p}) \le 2n^{-p/2}\;.
\]

\end{proof}

\end{lemma*}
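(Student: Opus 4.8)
The plan is to reduce the lemma to the classical non-asymptotic control of the extreme singular values of a standard Gaussian matrix, and then transfer that estimate to the Frobenius norm. First I would \emph{whiten} the sample: write $X^{(t)}=(\Sigma_0^{(k)})^{1/2}g^{(t)}$ with $g^{(1)},\dots,g^{(n)}$ independent standard Gaussian vectors in $\mathbb R^p$, and let $Q\in\mathbb R^{n\times p}$ have the $g^{(t)}$ as its rows, so that $n^{-1}\sum_{t=1}^{n}X^{(t)}(X^{(t)})^{\top}=(\Sigma_0^{(k)})^{1/2}\big(n^{-1}Q^{\top}Q\big)(\Sigma_0^{(k)})^{1/2}$ and hence
\[
\hat W_{k;n}=(\Sigma_0^{(k)})^{1/2}\big(n^{-1}Q^{\top}Q-I_p\big)(\Sigma_0^{(k)})^{1/2}.
\]
Submultiplicativity of $\vertiii{\cdot}_2$ together with $\vertiii{(\Sigma_0^{(k)})^{1/2}}_2^{2}=\Lambda_{\max}(\Sigma_0^{(k)})\le\phi_{\max}$ then gives $\vertiii{\hat W_{k;n}}_2\le\phi_{\max}\,\vertiii{n^{-1}Q^{\top}Q-I_p}_2$, so it is enough to control the centred Gram matrix of the standard ensemble.

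For that object I would quote the classical Gaussian-ensemble singular-value bound \citep{Davidson2001,Wainwright2009}: for every $a>0$, with probability at least $1-2\exp(-na^{2}/2)$ one has $1-\sqrt{p/n}-a\le\sigma_{\min}(Q/\sqrt n)\le\sigma_{\max}(Q/\sqrt n)\le 1+\sqrt{p/n}+a$, and on that event $\vertiii{n^{-1}Q^{\top}Q-I_p}_2=\max\{\sigma_{\max}^{2}-1,\,1-\sigma_{\min}^{2}\}\le 2(\sqrt{p/n}+a)+(\sqrt{p/n}+a)^{2}=\delta(n,p,a)$; combined with the previous display this is \eqref{eq:wainwright_concentration}, the hypothesis $p\le n$ being what makes the lower singular-value estimate informative. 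For the explicit bound I would specialise $a=\sqrt{p\log n/n}$, so that the probability becomes $2n^{-p/2}$, and then simplify $\delta(n,p,a)$: using $\log n\ge 1$ gives $\sqrt{p/n}\le\sqrt{p\log n/n}$, using that $\log n/n$ is maximised at $n=e$ gives $\sqrt{p\log n/n}\le\sqrt{p/e}$, and $p>2$ lets one absorb the additive constant, leaving $\delta(n,p,\sqrt{p\log n/n})<\tfrac{4p}{\sqrt e}\sqrt{\log n/n}$. Passing from the operator norm to the Frobenius norm through $\|\hat W_{k;n}\|_F\le\sqrt p\,\vertiii{\hat W_{k;n}}_2$ (the matrix being $p\times p$) then turns the operator-norm tail into the Frobenius tail of \eqref{eq:loose_sd}.

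The one step I expect to require real care is this last conversion: the factor $\sqrt p$ lost in bounding $\|\cdot\|_F$ by $\vertiii{\cdot}_2$ interacts with the $p$-dependence already sitting inside $\delta$, so one must be deliberate about how the polynomial-in-$p$ prefactor is split between the simplification of $\delta$ and the norm conversion in order to arrive at the stated radius $\tfrac{4\phi_{\max}}{\sqrt e}\sqrt{p\log n/n}$ with an explicit constant. Everything upstream (the whitening, the submultiplicativity step, and the Gaussian-ensemble singular-value tail) is routine; it is worth noting that the resulting bound is intentionally loose, trading sharp constants for keeping the explicit $p$-dependence visible, which is exactly the form the downstream changepoint argument makes use of.
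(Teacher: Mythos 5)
Your proposal follows the same route as the paper's own proof: whiten to reduce $\hat W_{k;n}$ to the centred Gram matrix of a standard Gaussian ensemble, invoke the Davidson--Szarek singular-value concentration (Lemma~9 of \citet{Wainwright2009}) to obtain the operator-norm tail at radius $\phi_{\max}\delta(n,p,a)$, specialise $a=\sqrt{p\log n/n}$ and simplify $\delta$, then convert to the Frobenius norm via $\|X\|_F\le\sqrt{\mathrm{rank}(X)}\,\vertiii{X}_2$. Everything up to and including the operator-norm bound and the simplification $\delta(n,p,\sqrt{p\log n/n})<\tfrac{4p}{\sqrt e}\sqrt{\log n/n}$ is sound and matches the paper (which handles the first two steps by citation rather than in detail).

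The step you flag as ``requiring real care'' is, however, a genuine gap rather than a bookkeeping matter, and you should not expect to close it by rearranging the polynomial-in-$p$ prefactor. The inequality $\|X\|_F\le\sqrt p\,\vertiii{X}_2$ yields the inclusion $\{\|X\|_F\ge\sqrt p\,\phi_{\max}\delta\}\subseteq\{\vertiii{X}_2\ge\phi_{\max}\delta\}$, so the Frobenius radius certified at probability $2n^{-p/2}$ is $\sqrt p\,\phi_{\max}\,\delta(n,p,\sqrt{p\log n/n})$, which with the simplified $\delta$ is of order $\phi_{\max}\,p\sqrt{p\log n/n}$ --- a factor $p$ larger than the stated radius $\tfrac{4\phi_{\max}}{\sqrt e}\sqrt{p\log n/n}$. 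No splitting of constants recovers the missing factor: an operator-norm tail can never localise $\|\cdot\|_F$ below $\sqrt{\mathrm{rank}}$ times the operator radius, and indeed for $\Sigma_0=I$ one has $\mathbb E\|\hat W_{k;n}\|_F^2=p(p+1)/n$, so the Frobenius norm is typically of order $p/\sqrt n$ while the claimed radius is only of order $\sqrt{p\log n}/\sqrt n$; they are comparable only when $p\lesssim\log n$. You should be aware that the paper's own proof stumbles at exactly this point: its final display divides $\delta$ by $\sqrt p$ rather than multiplying, which runs the norm inequality in the wrong direction. To actually reach a bound of the displayed form one must either accept the extra factor of $p$ in the radius (as the companion high-dimensional lemma effectively does) or abandon the operator-norm route and control $\|\hat W_{k;n}\|_F$ directly, e.g.\ entrywise with a union bound or via the chi-square structure of $\mathrm{tr}(\hat W_{k;n}^2)$, exploiting that $p$ is fixed in this regime.
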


\subsection{Proof of Lemma \ref{lemma:upper_bound_a_tk_cap_D_tm}}

\label{subsec:upper_bound_DtM}
\begin{lemma*}The probability of the intersection of $A_{T,k}$ and
$D_{T}^{(m)}$ can be bounded from above by considering the events
\begin{eqnarray*}
E_{k}^{'} & := & \{(\hat{\tau}_{k+1}-\tau_{k})\ge d_{\min}/2\}\;,\\
E_{k}^{''} & := & \{(\tau_{k}-\hat{\tau}_{k})\ge d_{\min}/2\}\;.
\end{eqnarray*}
In particular, one can demonstrate that:

\begin{equation}
P[A_{T,k}\cap D_{T}^{(m)}]\le P[A_{T,k}\cap E_{k}^{'}\cap D_{T}^{(m)}]+\sum_{j=k+1}^{K}P[E_{j}^{''}\cap E_{j}^{'}\cap D_{T}^{(m)}]\;.\label{eq:compliment_middle_split-1}
\end{equation}

\end{lemma*}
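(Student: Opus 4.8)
The plan is to prove this as a purely deterministic set-inclusion, after which the bound follows by one application of subadditivity of $P$; no probabilistic input beyond the union bound is required, so the content is entirely combinatorial bookkeeping about how the estimated changepoints can sit relative to the true ones on $D_{T}^{(m)}$. Throughout I assume $\hat{\tau}_{k}<\tau_{k}$, the complementary case $\hat{\tau}_{k}>\tau_{k}$ being symmetric (with the leftward analogues of $E_{k}',E_{k}''$). Two structural facts are used repeatedly. First, on $D_{T}^{(m)}$ we have $\hat{\tau}_{j+1}>\tau_{j}$ for every $j\in[K]$: for $j\le K-1$ this is the left inequality in the defining condition $\tau_{j}<\hat{\tau}_{j+1}<\tau_{j+2}$ of $D_{T}^{(m)}$ at index $j+1$, and for $j=K$ it is trivial since $\hat{\tau}_{K+1}=T+1$. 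Second, $E_{K}'$ holds with certainty: since $\hat{\tau}_{K+1}=T+1=\tau_{K+1}$ and the last block has length $\tau_{K+1}-\tau_{K}\ge d_{\min}$, we get $\hat{\tau}_{K+1}-\tau_{K}\ge d_{\min}\ge d_{\min}/2$.

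The core claim is the inclusion
\[
A_{T,k}\cap D_{T}^{(m)}\ \subseteq\ \big(A_{T,k}\cap E_{k}'\cap D_{T}^{(m)}\big)\ \cup\ \bigcup_{j=k+1}^{K}\big(E_{j}''\cap E_{j}'\cap D_{T}^{(m)}\big)\,.
\]
To prove it, fix an outcome in $A_{T,k}\cap D_{T}^{(m)}$. If $E_{k}'$ holds, the outcome lies in the first set. Otherwise $\hat{\tau}_{k+1}-\tau_{k}<d_{\min}/2$; combined with $\hat{\tau}_{k+1}>\tau_{k}$ and $\tau_{k+1}-\tau_{k}\ge d_{\min}$ this gives $\hat{\tau}_{k+1}<\tau_{k+1}$ and $\tau_{k+1}-\hat{\tau}_{k+1}=(\tau_{k+1}-\tau_{k})-(\hat{\tau}_{k+1}-\tau_{k})\ge d_{\min}/2$, so $E_{k+1}''$ holds. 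The same arithmetic yields the inductive step: for $k+1\le j<K$, if $E_{j}''$ holds and $E_{j}'$ fails then $E_{j+1}''$ holds. Now set $j^{\star}:=\min\{\,j\ge k+1:E_{j}'\text{ holds}\,\}$; this minimum exists and satisfies $j^{\star}\le K$ because $E_{K}'$ is certain on $D_{T}^{(m)}$. By the inductive step $E_{j}''$ holds for all $k+1\le j\le j^{\star}$, so in particular $E_{j^{\star}}''$ holds, and $E_{j^{\star}}'$ holds by definition of $j^{\star}$; hence the outcome lies in $E_{j^{\star}}''\cap E_{j^{\star}}'\cap D_{T}^{(m)}$, one of the sets in the union. (When $k=K$ the sum is empty, consistent with the fact that $E_{K}'$ is certain.)

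Applying the union bound to the displayed inclusion, and discarding the now-superfluous factor $A_{T,k}$ from each term of the sum since we only seek an upper bound, gives exactly
\[
P[A_{T,k}\cap D_{T}^{(m)}]\ \le\ P[A_{T,k}\cap E_{k}'\cap D_{T}^{(m)}]\ +\ \sum_{j=k+1}^{K}P[E_{j}''\cap E_{j}'\cap D_{T}^{(m)}]\,.
\]
The only point demanding care is the termination of the recursion, i.e. verifying that $E_{K}'$ is an almost-sure event under the convention $\hat{\tau}_{K+1}=T+1$ together with $d_{\min}\le\tau_{K+1}-\tau_{K}$; this is what prevents the chain of $E_{j}''$'s from running past $j=K$. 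I expect this boundary/termination check to be the main (and minor) obstacle — everything else is the elementary arithmetic producing $E_{j+1}''$ from $E_{j}''\cap(E_{j}')^{c}$ — and the combinatorial skeleton is the one used by \citet{Harchaoui2010}.
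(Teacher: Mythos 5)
Your proof is correct and follows essentially the same cascading argument as the paper's: the paper iterates $P[D_{T}^{(m)}\cap E_{j}'']\le P[D_{T}^{(m)}\cap E_{j}''\cap E_{j}']+P[D_{T}^{(m)}\cap E_{j+1}'']$ using the almost-sure event $E_{j}'\cup E_{j+1}''$ (which is exactly your observation that failure of $E_{j}'$ forces $E_{j+1}''$, since $\tau_{j+1}-\tau_{j}\ge d_{\min}$), terminating because $E_{K}'$ holds with probability one under the convention $\hat{\tau}_{K+1}=T+1$. Your packaging of the cascade as a single deterministic set inclusion via the minimal index $j^{\star}$, followed by one union bound, is a marginally cleaner write-up of the identical combinatorial skeleton.
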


\begin{proof}
The strategy is to expand the probability in terms of exhaustive events
(relating to the estimated changepoint positions), under a symmetry
argument, we assume $\hat{\tau}_{k}<\tau_{k}$. Noting $P[E_{k}^{'}\cup E_{k+1}^{''}]=1$,
then expanding the original event, we find

\begin{eqnarray*}
P[A_{T,k}\cap D_{T}^{(m)}] & \le & P[A_{T,k}\cap D_{T}^{(m)}\cap E_{k}^{'}]+P[A_{T,k}\cap D_{T}^{(m)}\cap E_{k+1}^{''}]\\
 & \le & P[A_{T,k}\cap D_{T}^{(m)}\cap E_{k}^{'}]+P[D_{T}^{(m)}\cap E_{k+1}^{''}]\;.
\end{eqnarray*}
Now consider the event $D_{T}^{(m)}\cap E_{k+1}^{''}$ corresponding
to the second term. One can then expand the probability of this intersection
over the events $E_{k+1}^{'}$ and $E_{k+2}^{''}$ relating to the
next changepoint, i.e
\[
P[D_{T}^{(m)}\cap E_{k}^{''}]\le P[D_{T}^{(m)}\cap E_{k+1}^{''}\cap E_{k+1}^{'}]+P[D_{T}^{(m)}\cap E_{k+1}^{''}\cap E_{k+2}^{''}]\;.
\]
Again, $P[D_{T}^{(m)}\cap E_{k}^{''}\cap E_{k+2}^{''}]$ may be upper
bounded by $P[D_{T}^{(m)}\cap E_{k+2}^{''}]$ such that $P[D_{T}^{(m)}\cap E_{k}^{''}\cap E_{k+2}^{''}]\le P[D_{T}^{(m)}\cap E_{k+2}^{''}]$.
Cascading this over all changepoints $j=k+1,\ldots,K$ we have
\[
P[D_{T}^{(m)}\cap E_{k}^{''}]\le\sum_{j=k+1}^{K}P[D_{T}^{(m)}\cap E_{j}^{''}\cap E_{j+1}^{'}]\;.
\]

\end{proof}

\subsection{Proof of Lemma \ref{lemma:union_bound_left}}

\label{subsec:proof_lemma_union_bound_left}

\begin{lemma*}
The probability of $D_{T}^{(l)}$ is bounded by 
\[
P[D_{T}^{(l)}]\le2^{K}\sum_{k=1}^{K-1}\sum_{l\ge k}^{K-1}P[E_{l}^{''}\cap E_{l}^{'}]+2^{K}P[E_{K}^{'}]\;.
\]

\end{lemma*}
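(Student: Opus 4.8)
The plan is to prove this by a \emph{cascading‑inclusion} argument of the same kind already used for $D_T^{(m)}$ in Lemma \ref{lemma:upper_bound_a_tk_cap_D_tm}, now tracking the estimated changepoints that have drifted far to the \emph{left} of their true counterparts. The first thing I would record is a deterministic remark: the $\hat\tau_k$ are strictly increasing and $\hat\tau_1 \ge 2 > \tau_0 = 1$, so the index $k$ appearing in the definition of $D_T^{(l)}$ necessarily satisfies $k \ge 2$, and for such a $k$ one has $\tau_k - \hat\tau_k \ge \tau_k - \tau_{k-1} = |\tau_k-\tau_{k-1}| \ge d_{\min}$, hence $E_k''$ occurs. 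A union bound over $k$ then gives $P(D_T^{(l)}) \le \sum_{k=2}^K P(\{\hat\tau_k \le \tau_{k-1}\}\cap C_T^c) \le \sum_{k=2}^K P(E_k'')$, so the task reduces to expanding each $P(E_k'')$.

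The engine of the expansion is the inclusion $E_j'' \subseteq (E_j'' \cap E_j') \cup E_{j+1}''$, valid for every $j \in \{1,\dots,K-1\}$, which I would justify by noting that $E_j' \cup E_{j+1}''$ is the sure event: if $\hat\tau_{j+1}-\tau_j < d_{\min}/2$ then $\tau_{j+1}-\hat\tau_{j+1} = (\tau_{j+1}-\tau_j)-(\hat\tau_{j+1}-\tau_j) > d_{\min}-d_{\min}/2 = d_{\min}/2$, i.e. $E_{j+1}''$ holds. Iterating this from $E_k''$ down to $j=K-1$ yields $E_k'' \subseteq \big(\bigcup_{l=k}^{K-1}(E_l''\cap E_l')\big)\cup E_K''$, and the residual term is harmless because $E_K'' \subseteq E_K'$ (the event $E_K' = \{\hat\tau_{K+1}-\tau_K \ge d_{\min}/2\}$ is forced by $\hat\tau_{K+1}=T+1$ and $|\tau_{K+1}-\tau_K| = T+1-\tau_K \ge d_{\min}$; alternatively one keeps $E_K''$ and bounds it by concentration at the assembly stage). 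Substituting back gives $P(E_k'') \le \sum_{l=k}^{K-1}P(E_l''\cap E_l') + P(E_K')$, and summing over $k=2,\dots,K$ and reindexing produces a bound of exactly the claimed shape. The exponential prefactor $2^K$ — where this bare argument only needs a linear one — I would simply carry as a crude allowance covering the enumeration of the possible orderings of $\{\hat\tau_j\}_j$ relative to $\{\tau_j\}_j$ and the mirror ``$\hat\tau_k > \tau_k$'' sub‑cases; since it does not grow with $T$ it is irrelevant to the asymptotics.

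The hard part will not be any single estimate but the combinatorial bookkeeping. I need to check that the cascade inclusion can be applied uniformly in $j$ without the conditioning on $C_T^c$ (or on $D_T^{(l)}$) being lost in a way that breaks a later step, that the recursion terminates correctly at $j=K$ using only the deterministic length bound on the final segment $(\tau_K,T+1]$, and that running the symmetric cascade ``to the right'' (for $D_T^{(r)}$, and for the mirror sub‑case inside $D_T^{(l)}$) still lets the union over all configurations be absorbed into the $2^K$ factor. Once that is arranged, every surviving summand is one of $P(E_l''\cap E_l')$ or $P(E_K')$ (equivalently $P(E_K'')$), each of which is controlled elsewhere in the proof via the stationarity conditions (\ref{eq:bound_cp_left_side_k+1_k})--(\ref{eq:bound_cp_left_side}) together with the concentration Lemmas \ref{lemma:tail_bound_hd} and \ref{lemma:tail_bound_sd}.
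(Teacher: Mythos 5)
Your proposal is correct, and the engine is the same as the paper's: the cascade inclusion $E_j''\subseteq (E_j''\cap E_j')\cup E_{j+1}''$, justified by the fact that $E_j'\cup E_{j+1}''$ is the sure event (since $\tau_{j+1}-\tau_j\ge d_{\min}$), iterated until the boundary where $E_K'$ holds deterministically because $\hat\tau_{K+1}=T+1$. Where you genuinely differ is in the first reduction of $D_T^{(l)}$. The paper partitions on the event $\{\max\{l:\hat\tau_l\le\tau_{l-1}\}=k\}$ and attaches a combinatorial multiplicity $2^{k-1}$ to each term (counting left/right configurations of the earlier changepoints), which is where the $2^K$ prefactor comes from; you instead use the deterministic inclusion $\{\hat\tau_k\le\tau_{k-1}\}\subseteq E_k''$ together with a plain union bound over $k$, which delivers the same double sum with only a linear (in $K$) multiplicity on the terminal term. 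Your version is therefore both simpler and tighter --- the paper's $2^{k-1}$ factors are an over-count, since the max-index events are already mutually exclusive --- and the stated bound follows a fortiori because $2^K$ dominates your constants. You also correctly flag the terminal term: as written in the lemma statement, $P[E_K']$ is the probability of a sure event and would render the bound vacuous; the paper's own proof (and its subsequent use in Eq.~(\ref{eq:E_K''})) ends with $P[E_K'']$, so the statement's $E_K'$ should be read as a typo for $E_K''$, exactly as your ``keep $E_K''$ and bound it by concentration'' alternative does. The only caution I would drop is the worry about a mirror ``$\hat\tau_k>\tau_k$'' sub-case inside $D_T^{(l)}$: the defining event $\hat\tau_k\le\tau_{k-1}$ already forces $\hat\tau_k<\tau_k$, so the mirror lives entirely in $D_T^{(r)}$ and is handled by symmetry.
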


\begin{proof}
We present here an expanded version of the proof given in \citet{Harchaoui2010}. Recall the definitions of the different events:
\[
E_{k}^{'}:=\{(\hat{\tau}_{k+1}-\tau_{k})\ge\frac{d_{\min}}{2}\}\quad\mathrm{and\quad}E_{k}^{''}:=\{(\tau_{k}-\hat{\tau}_{k})\ge\frac{d_{\min}}{2}\}\;.
\]
For each new changepoint in the model, there is an extra option for
this (latest changepoint) to trigger the event 
\begin{equation}
\left\{ \exists k\in[K],\;\hat{\tau}_{k}\le\tau_{k-1}\right\} \;.\label{eq:left_compliment}
\end{equation}
In particular, the total number of combinations (of changepoints)
which could trigger this event doubles on the addition of an extra
changepoint. Lemma \ref{lemma:union_bound_left} considers the probability
of each of the changepoints being estimated to the left of $\tau_{k-1}$.
To start, we remark that the probability of $D_{T}^{(l)}$ is bounded
by 
\begin{equation}
P[D_{T}^{(l)}]\le\sum_{k=1}^{K}2^{k-1}P[\max\{l\in[K]\:|\:\hat{\tau}_{l}\le\tau_{l-1}\}=k]\;.\label{eq:union_bound_left}
\end{equation}
The term $P[\max\{l\in[K]\:|\:\hat{\tau}_{l}\le\tau_{l-1}\}=k]$ describes
the probability that the last changepoint (such that $\hat{\tau}_{l}$
is to the left, i.e. before $\tau_{l-1}$) is $k$. On increasing
$k$ by one (for $k\ge2$), the number of combinations of left/right
estimates for previous changepoints doubles. For example, consider
the case for $k=3$ such that the event $S_{3}:=\{\hat{\tau}_{3}\le\tau_{2}\}$
is triggered, see Fig. \ref{fig:compliment_left_combination}. The
possible results for previous changepoints are then $S_{2}:=\{\hat{\tau}_{2}\le\tau_{1}\}$,
it's compliment $S_{2}^{c}$, and the event $S_{1}:=\{\hat{\tau}_{1}\le1\}$
or $S_{1}^{c}$. In total, there are $2^{2}$ ways that the event
$S_{3}$ can occur\footnote{Arguably, there are actually 3 combinations of changepoint event that
can cause $S_{3}$ as $\hat{\tau}_{1}>\hat{\tau}_{0}=1$ by definition.
However, this does not effect the upper bound.}. In general for the changepoint $k$ there are $2^{k-1}$ combinations
of events that allow $S_{k}$ to be triggered. However, since these
events are not mutually exclusive, this only provides an upper bound.

\begin{figure}[h]
\begin{centering}
\includegraphics[width=0.7\columnwidth]{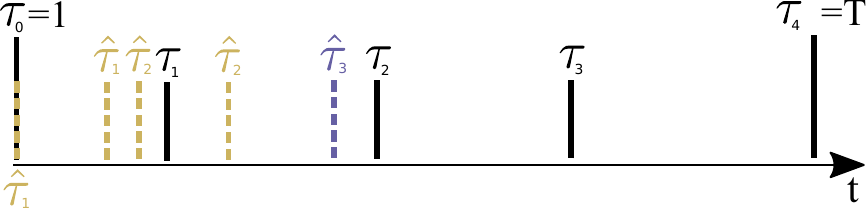}
\par\end{centering}
\caption{The gold changepoint estimates indicate examples of allowable positions
for the changepoints $l<k=3$ which satisfy $\{\hat{\tau}_{l}\le\tau_{l-1}\}$.
Note: for the case displayed $K=3$ and $k=3$ thus there are 4 combinations
of changepoints (in gold) that permit the purple event $\max\{l\in[4]\:|\:\hat{\tau}_{l}\le\tau_{l-1}\}=3$.
\label{fig:compliment_left_combination}}
\end{figure}
\citet{Harchaoui2010} and \citet{Kolar2012} remark that an event
where the $k$th changepoint is the largest to satisfy $\{\hat{\tau}_{l}\le\tau_{l-1}\}$,
is a subset of events relating to later changepoints $l\ge k$. Correspondingly,
we have
\begin{equation}
\{\max\{l\in[K]\:|\:\hat{\tau}_{l}\le\tau_{l-1}\}=k\}\subseteq\cup_{l=k}^{K}\{\tau_{l}-\hat{\tau}_{l}\ge d_{\min}/2\}\cap\{\hat{\tau}_{l+1}-\tau_{l}\ge d_{\min}/2\}\;.\label{eq:set_expansion_left_compliment}
\end{equation}

The union bound applied to (\ref{eq:set_expansion_left_compliment})
provides us with the bound: 
\[
P[\max\{l\in[K]\:|\:\hat{\tau}_{l}\le\tau_{l-1}\}=k]\le\sum_{l\ge k}P[\{\tau_{l}-\hat{\tau}_{l}\ge\frac{d_{\min}}{2}\}\cap\{\hat{\tau}_{l+1}-\tau_{l}\ge\frac{d_{\min}}{2}\}]\;,
\]
and thus
\[
P[D_{T}^{(l)}]\le\sum_{k=1}^{K}2^{k-1}\sum_{l\ge k}^{K}P[\{\tau_{l}-\hat{\tau}_{l}\ge\frac{d_{\min}}{2}\}\cap\{\hat{\tau}_{l+1}-\tau_{l}\ge\frac{d_{\min}}{2}\}]\;.
\]
Since we want an upper bound, the largest factor ($2^{K}$) can be
taken out the summation. The term $k=K$ contains the event $\{\hat{\tau}_{K+1}-\tau_{K}\ge d_{\min}/2\}$,
this occurs with probability one as the last changepoint $\hat{\tau}_{K+1}=T+1$.
We can thus truncate the final term and obtain the bound:
\begin{eqnarray*}
P[D_{T}^{(l)}] & \le & 2^{K}\sum_{k=1}^{K-1}\sum_{l\ge k}^{K-1}P[\{\tau_{l}-\hat{\tau}_{l}\ge\frac{d_{\min}}{2}\}\cap\{\hat{\tau}_{l+1}-\tau_{l}\ge\frac{d_{\min}}{2}\}]\\
 &  & +2^{K}P[\{\tau_{K}-\hat{\tau}_{K}\ge\frac{d_{\min}}{2}\}]\;.
\end{eqnarray*}
The above can be written in a shortened form by relating it to the
events $E_{k}^{'},E_{k}^{''}$ defined in (\ref{eq:Events_compliment_bound_k+1}),
such that
\[
P[D_{T}^{(l)}]\le2^{K}\sum_{k=1}^{K-1}\sum_{l\ge k}^{K-1}P[E_{l}^{''}\cap E_{l}^{'}]+2^{K}\mathbb{P}[E_{K}^{''}]\;.
\]

\end{proof}


\section{Proof of Model-Selection Consistency}

\subsection{Proof Overview}

Let us define a set of pairs $\mathcal{M}_{k}=\{(i,j)\:|\:\Theta_{0;ij}^{(k)}\ne0\}$
to be the support set of the true model in block $k$ and let $\mathcal{M}_{k}^{\perp}=\{(i,j)\:|\:\Theta_{ij}^{(k)}=0\}$
be its compliment. 

\begin{theorem*}{Bounds on the estimation error}

Consider the GFGL estimator with Assumptions (\ref{ass:bounded_cp_error},\ref{ass:incoherence}). Assume $\lambda_1=16\alpha^{-1}\delta$, $\lambda_2=\rho\lambda_1$ for some finite $\rho>0$ and $\delta>2^4\sqrt{2}c_{\sigma_\infty}\sqrt{\log(4p^{\beta}) /\gamma_{\min} T}$ where $\beta>2$. Let $d$ be the maximum degree of each node in the true model, and define  
\[
v_{\mathcal{C}}=6\{1+16\alpha^{-1}(1+2\rho)\}d\max\{ K_{\Sigma_{0}}K_{\Gamma_{0}},K_{\Gamma_{0}}^2 K_{\Sigma_{0}}^3 \}\;.
\]
Then for $T\ge 2^9 \gamma_{\min}^{-1}\max\{1/8c_{\sigma_\infty},v_{\mathcal{C}}\}^2 c^2_{\sigma_\infty}\log(4p^\beta)$, we have
\[
\|\hat{\Theta}^{(k)}-\Theta^{(k_{\max})}_0\|_\infty\le 2K_{\Gamma_{0}}\{1+16\alpha^{-1}(1+2\rho \hat{n}_k^{-1}) \}\delta\;,
\]
in probability greater than $1-1/p^{\beta-2}$.

\end{theorem*}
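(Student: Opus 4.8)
The plan is to run a primal--dual witness (PDW) argument, conditional on the changepoint event $E_\tau$ of Assumption~\ref{ass:bounded_cp_error}, so that the analysis reduces to a \emph{perturbed} $\ell_1$-penalised log-determinant problem on each of the $K+1$ blocks, and then to import the machinery of \citet{Ravikumar2011} with two modifications accounting for (i) the group-fused subgradient that links neighbouring blocks and (ii) the misalignment between estimated and true blocks.

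\textbf{Step 1 (block-wise stationarity).} On $E_\tau$ we have $\hat K=K$ and $|\hat\tau_k-\tau_k|\le T\delta_T$, so $\hat\Theta^{(t)}$ is constant on each estimated block $\{\hat\tau_{k-1},\dots,\hat\tau_k-1\}$ and can be written $\hat\Theta^{(k)}$. First I would subtract the stationarity identity of Proposition~\ref{prop:optimality_conditions} at $l=\hat\tau_{k-1}$ from the one at $l=\hat\tau_k$ and divide by $\hat n_k=\hat\tau_k-\hat\tau_{k-1}$, obtaining an identity of the form
\begin{equation*}
(\hat\Theta^{(k)})^{-1}-\Sigma_0^{(k_{\max})} + \hat W^{(k)} + \lambda_1 \hat R_1^{(k)} + \tfrac{\lambda_2}{\hat n_k}\bigl(\hat R_2^{(\hat\tau_{k-1})}-\hat R_2^{(\hat\tau_k)}\bigr) = 0 ,
\end{equation*}
(signs as in the optimality conditions), where $\hat R_1^{(k)}$ is the common $\ell_1$ subgradient on the block, $\|\hat R_2^{(\hat\tau_{k-1})}-\hat R_2^{(\hat\tau_k)}\|_\infty\le 2$, and $\hat W^{(k)}$ collects the block-averaged sampling error together with the deterministic bias $\hat n_k^{-1}\sum_t(\Sigma_0^{(t)}-\Sigma_0^{(k_{\max})})$ from the at most $2T\delta_T$ boundary time points whose true covariance differs from $\Sigma_0^{(k_{\max})}$ (bounded via Assumption~\ref{ass:finite_jumps_GFGL} by $2T\delta_T M/\hat n_k=O(\delta_T)$). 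Read this way, $\hat\Theta^{(k)}$ solves an $\ell_1$-penalised log-det problem with ``data'' $\Sigma_0^{(k_{\max})}+\hat W^{(k)}$, penalty $\lambda_1$, and a bounded perturbation of size $\le 2\lambda_2/\hat n_k$, so the effective scale controlling the error is $\lambda_1(1+2\rho\hat n_k^{-1})$.

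\textbf{Step 2 (concentration and PDW).} Next I would bound $\|\hat W^{(k)}\|_\infty\le\delta$ with probability at least $1-p^{2-\beta}$: splitting off the $O(\delta_T)$ deterministic bias (negligible in the regime $T\delta_T\to\infty$, $\delta_T\to0$), the zero-mean part is an average of $\hat n_k\asymp\gamma_{\min}T$ sub-Gaussian outer products with uniformly bounded covariances, so the elementwise tail underlying Lemma~\ref{lemma:tail_bound_hd} (with the constant taken uniformly over blocks as $c_{\sigma_\infty}$), union-bounded over $p^2$ entries, makes $\delta=2^4\sqrt2\,c_{\sigma_\infty}\sqrt{\log(4p^\beta)/(\gamma_{\min}T)}$ yield tail $\le 4p^2(4p^\beta)^{-1}=p^{2-\beta}$. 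Then, exactly as in \citet{Ravikumar2011}, I would define the restricted/oracle estimator $\tilde\Theta^{(k)}$ by forcing $\tilde\Theta^{(k)}_{ij}=0$ on $\mathcal M_{k_{\max}}^\perp$, Taylor-expand $(\tilde\Theta^{(k)})^{-1}-\Sigma_0^{(k_{\max})}=-\Gamma_0^{(k_{\max})}\,\mathrm{vec}(\tilde\Delta)+\mathrm{vec}(R(\tilde\Delta))$ with $\|R(\tilde\Delta)\|_\infty\le\tfrac32 d K_{\Sigma_0}^3\|\tilde\Delta\|_\infty^2$, and run the Brouwer fixed-point argument on the $\mathcal M_{k_{\max}}$-components to get $\|\tilde\Delta_{\mathcal M_{k_{\max}}}\|_\infty\le r:=2K_{\Gamma_0}\bigl(\|\hat W^{(k)}\|_\infty+\lambda_1(1+2\rho\hat n_k^{-1})\bigr)$, valid provided $r\le\min\{(3K_{\Sigma_0}d)^{-1},(3K_{\Sigma_0}^3K_{\Gamma_0}d)^{-1}\}$. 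Substituting $\|\hat W^{(k)}\|_\infty\le\delta$ and $\lambda_1=16\alpha^{-1}\delta$ gives $r\le 2K_{\Gamma_0}\{1+16\alpha^{-1}(1+2\rho\hat n_k^{-1})\}\delta$, which is the asserted bound; the two remainder inequalities are precisely what the stated lower bound $T\ge 2^9\gamma_{\min}^{-1}\max\{(8c_{\sigma_\infty})^{-1},v_{\mathcal C}\}^2 c_{\sigma_\infty}^2\log(4p^\beta)$ guarantees (the $\max$ also keeps $\delta$ in the range where the concentration bound holds). Finally, strict dual feasibility $\|\hat R^{(k)}_{1;\mathcal M_{k_{\max}}^\perp}\|_\infty<1$ follows from the incoherence Assumption~\ref{ass:incoherence} with $\alpha=\min_k\alpha_k$: the choice $\lambda_1=16\alpha^{-1}\delta$ leaves an $\alpha/2$ slack that dominates the combined contribution of $\hat W^{(k)}$, the remainder, and the fused perturbation $2\lambda_2/\hat n_k$, and the ``witness'' value of $\hat R_2$ at the non-changepoints stays inside the Frobenius ball by the same slack. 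Hence PDW succeeds, $\hat\Theta^{(k)}=\tilde\Theta^{(k)}$, and $\|\hat\Theta^{(k)}-\Theta_0^{(k_{\max})}\|_\infty\le r$ on the concentration event intersected with $E_\tau$, i.e. in probability at least $1-p^{2-\beta}$ (losing an additional $P(E_\tau^c)$ unconditionally).

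\textbf{Main obstacle.} The hard part is not the $\ell_\infty$ fixed-point estimate, which is essentially Ravikumar's once Step~1 is in place, but the two couplings peculiar to GFGL. First, the fused penalty forbids an honest decomposition into independent graphical-lasso problems; Step~1 localises the coupling into the single bounded term $(\lambda_2/\hat n_k)(\hat R_2^{(\hat\tau_{k-1})}-\hat R_2^{(\hat\tau_k)})$, but one must then check that this perturbation breaks neither strict dual feasibility (which is why the constant in $\lambda_1$ is inflated to $16\alpha^{-1}$) nor the remainder conditions (which is why $\rho=\lambda_2/\lambda_1$ may grow only slower than $\hat n_k$). Second, conditioning on $E_\tau$ does not make the block samples i.i.d.: the $O(T\delta_T)$ misaligned time points inject a deterministic $O(\delta_T)$ bias into $\hat W^{(k)}$, tolerable only because $E_\tau$ confines them to a vanishing fraction of the block, and carrying this bias cleanly through the concentration step and the sample-size bookkeeping is the most delicate piece of the argument.
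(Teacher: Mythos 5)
Your proposal is correct and follows essentially the same route as the paper: a primal--dual witness conditional on $E_\tau$, the same block-wise stationarity decomposition with the fused subgradient isolated as a bounded $(\lambda_2/\hat n_k)$-perturbation, the same Ravikumar-style remainder bound and Brouwer fixed-point argument yielding $r=2K_{\Gamma_0}\{\|\mathrm{ave}(W^{(k)})\|_\infty+\lambda_1(1+2\rho\hat n_k^{-1})\}$, and the same $16\alpha^{-1}$ inflation of $\lambda_1$ to preserve strict dual feasibility. The only cosmetic difference is that you separate the deterministic misalignment bias explicitly and bound it via Assumption~\ref{ass:finite_jumps_GFGL}, whereas the paper absorbs the misaligned segments into its block-averaged sampling-error lemma (Lemma~\ref{lemma:bound_on_sampling_average}); you also leave implicit the uniqueness/strict-convexity step the paper uses to pass from the witness to every GFGL minimiser.
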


\begin{proof}

A popular and fairly general approach to demonstrating such consistency
is known as the \emph{primal-dual witness} method \citep{Wainwright2009}.
Principally, this method works by deconstructing the KKT conditions
of an M-estimator into two blocks. Let us label these conditions $\mathrm{KKT}(\mathcal{M},\partial r(\Theta_{\mathcal{M}}))$
and $\mathrm{KKT}(\mathcal{M}^{\perp},\partial r(\Theta_{\mathcal{M}^{\perp}}))$,
such that they respectively concern components of the the true model
$\Theta_{0;\mathcal{M}}$ and the compliment $\Theta_{0;\mathcal{M}^{\perp}}$.
The primal-dual witness approach consists of the following steps:
\begin{enumerate}
\item Solve a restricted problem; $\bar{\Theta}:=\arg\min_{\Theta}l(\Theta;X)+\lambda_{T}r(\Theta)$,
such that $\Theta_{\mathcal{M}^{\perp}}=0$. This constitutes a restricted
estimation problem, whereby the estimate is only supported on the
true model-subspace. It verifies that the $\mathrm{KKT}(\mathcal{M},\partial r(\Theta_{\mathcal{M}}))$
is satisfied under the block corresponding to the true support.
\item Select $\bar{R}$ as the sub-differential of the regulariser $r(\cdot)$
evaluated at $\bar{\Theta}$. Find the subgradient over the components
outside the model-subspace $\mathcal{M}^{\perp}$ via $\mathrm{KKT}(\bar{\Theta},\bar{R})$
\item Check that the sub-gradient in step (2) is sufficiently small to demonstrate
the solution is dual feasible.
\end{enumerate}
In what follows we wll dissect the GFGL estimator according to the
above steps, a similar approach in the stationary i.i.d setting is
discussed in \citet{Ravikumar2011}. In our case, we will require
some care to take account of the smoothing regulariser. 

Let $\hat{S}_{k;\hat{n}_k}:=\hat{n}_k^{-1}\sum_{t=1}^{\hat{n}_k} X^{(t)}(X^{(t)})^{\top}$
represent the empirical covariance matrix calculated by taking $\hat{n}_k$ samples from $X^{(t)}$ with covariance matrix $\Sigma_0^{(k)}$.
Since $\{\hat{\Theta}^{(k)}\}_{k=1}^{B}$ is an optimal solution
for GFGL, for each estimated block $k,l=1,\ldots,\hat{B}=K+1$ it needs
to satisfy
\begin{equation}
\sum_{l\ne k\in[\hat{B}]}\hat{n}_{lk}(W_{l;\hat{n}_{lk}})+\hat{n}_{kk}W_{k;\hat{n}_{kk}}-\hat{n}_{k}\hat{\Sigma}^{(k)}+\lambda_{1}\hat{n}_{k}\hat{R}_{1}^{(\hat{\tau}_{k-1})}+\lambda_{2}(\hat{R}_{2}^{(\hat{\tau}_{k-1})}-\hat{R}_{2}^{(\hat{\tau}_{k})})=0\;,\label{eq:block_optimality}
\end{equation}
where $\hat{n}_{lk}$ describes the proportion of overlap between
the $l$th true block and the $k$th estimated block, and $W_{k;n}:=\Sigma_{0}^{(k)}-\hat{S}_{k;n}$
represents the error accrued in the empirical covariance estimate.
The term $\sum_{l\ne k\in[\hat{B}]}\hat{n}_{lk}(W_{l;\hat{n}_{lk}})$
can be though of as providing a sampling bias due to estimation error in the changepoints, wheras the term $\hat{n}_{kk}W_{k;\hat{n}_{kk}}$
compares samples and the ground-truth of the same underlying covariance
matrix.

We will now proceed to construct an oracle estimator $\bar{\Theta}\in\tilde{\mathbb{R}}^{\hat{B}p\times\hat{B}p}$.
The oracle is constructed through solving the restricted problem
\begin{align*}
\bar{\Theta}:= \argmin_{\{U^{(k)}\succ 0|\:U_{\mathcal{M}^{\perp}}^{(k)}=0\}_{k=1}^{\hat{B}}} & \bigg[\sum_{k=1}^{\hat{B}}\big\{\sum_{l=1}^{\hat{B}}\hat{n}_{lk}\mathrm{tr}(\hat{S}^{(l)}U^{(k)})-\hat{n}_{k}\log\det(U^{(k)})\big\}\\
 & \quad\quad\quad+\lambda_{1}\sum_{k=1}^{\hat{B}}\hat{n}_{k}\|U^{(k)}\|_{1}+\lambda_{2}\sum_{k=2}^{\hat{B}}\|U^{(k)}-U^{(k-1)}\|_{F}\bigg]\;.
\end{align*}
The construction above does not utilise oracle knowledge to enforce
changepoint positions, only the sparsity structure of the block-wise
precision matrices. Again, for each estimate block, we obtain a set
of optimality conditions like (\ref{eq:block_optimality}). Let us
denote the sub-gradient of the restricted problem evaluated at the
oracle solution as $\bar{R}_{1}^{(k)}\equiv\bar{R}_{1}^{(\hat{\tau}_{k-1})}$
for the $\ell_{1}$ penalty, and $\bar{R}_{2}^{(\hat{\tau}_{k-1})},\bar{R}_{2}^{(\hat{\tau}_{k})}$
for the smoothing components. By definition the matrices $\bar{R}_{2}^{(\hat{\tau}_{k-1})},\bar{R}_{2}^{(\hat{\tau}_{k})}$
are members of the sub-differential and hence dual feasible. To show
that $\bar{\Theta}$ is also a minimiser of the unrestricted GFGL
problem (\ref{eq:GFGL_cost_function}), we will show that $\|\bar{R}_{1;\mathcal{M}^{\perp}}^{(k)}\|_{\infty}\le1$
and is hence dual-feasible.

\citet{Ravikumar2011} Lemma 4 demonstrates that for the standard
graphical lasso problem strict dual-feasibiility can be obtained
by bounding the max of the sampling and estimation error. The estimation
error (on the precision matrices) is tracked through the difference
(remainder) between the gradient of the log-det loss function and
its first-order Taylor expansion. In our case we will track the precision
matrices at each block $k$ via the \emph{remainder function} defined
as 
\[
\mathcal{E}(\Delta)=\bar{\Theta}^{-1}-\Theta_{0}^{-1}+\Theta_{0}^{-1}\Delta\Theta_{0}^{-1}\;,
\]
where $\Delta=\bar{\Theta}-\Theta_{0}\in\mathbb{R}^{p\times p}$.

\begin{lemma}
\label{lemma:dual-feasibility}
The out-of-subspace parameters are dual feasible such that $\|\bar{R}_{1;\mathcal{M}^{\perp}}^{(k)}\|_{\infty}<1$
if 
\begin{equation}
\label{eq:dual-feasibility-conditions}
\max\left\{ \|\mathrm{ave}(W^{(k)})\|_{\infty},\|\mathcal{E}(\Delta)\|_{\infty},\frac{\lambda_{2}}{\hat{n}_{k}}\|\bar{R}_{2}^{(\hat{\tau}_{k-1})}\|_{\infty},\frac{\lambda_{2}}{\hat{n}_{k}}\|\bar{R}_{2}^{(\hat{\tau}_{k})}\|_{\infty}\right\} \le\alpha\lambda_{1}/16\;,
\end{equation}
where $\mathrm{ave}(W^{(k)}):=\hat{n}_{k}^{-1}(\sum_{l\ne k}^{\hat{B}}\hat{n}_{lk}W_{l;\hat{n}_{lk}}+\hat{n}_{kk}W_{k;\hat{n}_{kk}})$.
\end{lemma}

We note at this point, that the condition (\ref{eq:dual-feasibility-conditions}) in the setting where $T\rightarrow\infty$ converges
to that of the standard graphical lasso \citep{Ravikumar2011}. Specifically, if changepoint error is bounded according to the event $E_\tau:=\{\max_{k}|\hat{\tau}_{k}-\tau_{k}|\le T\delta_{T}\}$, the mis-specification error averaged across the block converges
to the exact case $\mathrm{ave}(W^{(k)})\rightarrow W_{\mathrm{exact}}^{(k)}$.
%
%
To make this argument more specific, we construct a loose bound on the sampling error accumulated over an estimated block.

\begin{lemma}
\label{lemma:bound_on_sampling_average}
The sampling error over a block is almost surely bounded according
to
\[
\sum_{l\in\hat{\mathcal{B}}^{(k)}}\hat{n}_{lk}\|W_{l;\hat{n}_{lk}}\|_{\infty}\le\max\{\hat{n}_{k},d_{\min}\}\|W_{l_\infty;d_{\min}/2}\|_{\infty}\;,
\]
and thus the average sampling error is bounded according to
\[
\|\mathrm{ave}(W^{(k)})\|_{\infty}\le\max\{1,d_{\min}/\hat{n}_{k}\}\|W_{\infty;d_{\min}/2}\|_{\infty}\;.
\]
\end{lemma}

If changepoint estimation is consistent (according to Assumption \ref{ass:bounded_cp_error})
then $P(E_\tau)=f_{\tau}(T,p)$, and thus have
$d_{\min}/\hat{n}_{k}<d_{\min}/(d_{\min}-\delta_{T}T)\rightarrow1$,
as $T\rightarrow\infty$. As a result, we bound $\|\mathrm{ave}(W^{(k)})\|_{\infty}\le\|W_{\infty;d_{\min}/2}\|_{\infty}$
from above and then analyse the conditions (\ref{eq:dual-feasibility-conditions}) on condition of the intersection $E_\tau \cap \mathcal{C}$ where
\[
\mathcal{C}:=\{ \|\hat{W}_{k_{\infty};d_{\min}/2}\|_\infty \le \delta \}\;.
\]
Through choice of regulariser $\lambda=16\alpha^{-1}\delta$, the condition $\|\mathrm{ave}(W^)\|_\infty\le \alpha\lambda_1/16$ is automatically satisfied. We now turn our attention to the size of the remainder $\|\mathcal{E}(\Delta)\|_\infty$. In the first step, we directly invoke a result from \cite{Ravikumar2011}:

\begin{lemma}
\label{lemma:bound_on_remainder}
If the bound $\|\Delta\|_{\infty}\le(3K_{\Sigma_{0}}d)^{-1}$ holds
and $d$ is the maxmimum node degree, then
\[
\|\mathcal{E}(\Delta)\|_{\infty}\le\frac{3}{2}d\|\Delta\|_{\infty}^{2}K_{\Sigma_{0}}^{3}\;.
\]
\end{lemma}
While we can use the same relation as Ravikumar to map $\|\Delta\|_{\infty}$
to $\|\mathcal{E}(\Delta)\|_{\infty}$ we need to modify our argument
for the actual control on $\|\Delta\|_{\infty}$. 

\begin{lemma}
\label{lemma:bound_on_infty_error}
The elementwise $\ell_{\infty}$ norm of the error is bounded such
that $\|\bar{\Delta}\|_{\infty}=\|\bar{\Theta}-\Theta_{0}\|_{\infty}\le r$
if 
\begin{equation}
\label{eq:r_definition}
r:=2K_{\Gamma_{0}}\{\|\mathrm{ave}(W_{k})\|_{\infty}+\lambda_{1}+\lambda_{2}\hat{n}_{k}^{-1}(\|\bar{R}_{2}^{(\hat{\tau}_{k-1})}\|_{\infty}+\|\bar{R}_{2}^{(\hat{\tau}_{k})}\|_{\infty})\}\;,
\end{equation}
and $
r \le\min \{(3K_{\Sigma_{0}}d)^{-1},(3K_{\Sigma_{0}}^{3}K_{\Gamma_{0}}d)^{-1} \}$.

\end{lemma}

We now propogate the results of Lemma \ref{lemma:bound_on_infty_error} through Lemma \ref{lemma:bound_on_remainder}, while conditioning on event $\{\mathcal{C}\cap E_\tau\}$.
First, let us note the contribution of the fused sub-gradient is bounded $\lambda_2\hat{n}_k^{-1}(\|\bar{R}_{2}^{(\hat{\tau}_{k-1})}\|_{\infty}+\|\bar{R}_{2}^{(\hat{\tau}_{k})}\|_{\infty})\le 2\lambda_2\hat{n}^{-1}_k$. Let us further assume that $\lambda_2=\lambda_1\rho$ for $\rho>0$, we now upper bound (\ref{eq:r_definition}) with the stated form of $\lambda_1$  such that
\[
r\le r_{\mathcal{C}}:=2K_{\Gamma_{0}}\{\delta+\lambda_{1}(1+2\rho\hat{n}_{k}^{-1})\}=2K_{\Gamma_{0}}\{1+16\alpha^{-1}(1+2\rho \hat{n}_k^{-1}) \}\delta \;.
\]
The condition in Lemma \ref{lemma:bound_on_infty_error} is now met, if $\delta\in(0,1/\max\{1/8c_{\sigma_\infty},v_{\mathcal{C}}\})$ where 
\[
v_{\mathcal{C}}=6\{1+16\alpha^{-1}(1+2\rho\hat{n}_{k}^{-1})\}d\max\{ K_{\Sigma_{0}}K_{\Gamma_{0}},K_{\Gamma_{0}}^2 K_{\Sigma_{0}}^3 \}\;.
\]
%
Using Lemma \ref{proof:empirical_covariance_frobenius_bound} we have the probabilistic bound on $\mathcal{C}^c$ given as
$
P(\|W_{k_{\infty};d_{\min}/2}\|_\infty>\delta)\le 1/p^{(\beta-2)}
$,
where we need $\delta>2^4\sqrt{2}c_{\sigma_\infty}\sqrt{\log (4p^{\beta})/\gamma_{\min}T}$. Remember $c_{\sigma_\infty}=\max_k\{c_{\sigma_k}\}$, i.e. we assume the slowest concentration possible over all blocks $k=1,\ldots,B$. This results in a lower bound for the  sample size of
\begin{equation}
T\ge 2^9 \gamma_{\min}^{-1}\max\{1/8c_{\sigma_\infty},v_{\mathcal{C}}\}^2 c^2_{\sigma_\infty}\log(4p^\beta)\quad;\quad\beta>2\;.
\end{equation}
The restricted solution is hence dual-feasible under the conditions of the proof such that $\bar{\Theta}=\hat{\Theta}$. The true edges are included in the estimate with probability at least $1-1/p^{\beta-2}$. The final step of the proof is to demonstrate that all possible solutions to GFGL maintain this relation. In the case of GFGL, the following lemma states that the objective function is strictly convex on the positive-definite cone. Hence, if we find a minima it is the global minima, and the dual-feasibility condition ensures that the suggested bounds are achieved.
\begin{lemma}
\label{lemma:strict_convexity}
For matrices $\Theta_{T}\in\mathcal{S}_{++}^{T}:=\{\{A^{(t)}\}_{t=1}^{T}\:|\:A^{(t)}\succ0\:,\:A^{(t)}=A^{(t)\top}\}$
the GFGL cost function is strictly convex.
\end{lemma}
\end{proof}

\begin{theorem*}{Model-selection consistency}

In addition to the assumptions in Theorem \ref{thm:bound_on_estimation error}. Let $\theta_{\min}^{(k)} := \min_{ij}|\Theta^{(k)}_{0;ij}|$ for all $(i,j)\in\mathcal M_k$ and for each $k=1,\ldots,B$. Let
\[
v_{\theta} = 2K_{\Sigma_0}\{1+16\alpha^{-1}(1+2\rho\hat{n}_k^{-1})\}\theta_{\min}^{-1}\;.
\]
If $T\ge 2^9\gamma_{\min}^{-1}\max\{(8c_{\sigma_\infty})^{-1},v_\mathcal{C},v_{\theta} \}^2c_{\sigma_\infty}^2\log(4p^\beta)$ then GFGL attains sign-consistency with probability tending to one:
\[
P\{E_{\mathcal{M}}(\hat{\Theta}^{(k)};\Theta_{0}^{(k_{\max})})\}\ge 1-1/p^{\beta-2}\rightarrow 1\quad;\quad\mathrm{as}\;(p,T)\rightarrow\infty\;.
\]
\end{theorem*}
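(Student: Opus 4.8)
The plan is to obtain this statement as a direct corollary of Theorem~\ref{thm:bound_on_estimation error}: once the elementwise error $\|\hat{\Theta}^{(k)}-\Theta_0^{(k_{\max})}\|_\infty$ is controlled \emph{and} the support of $\hat{\Theta}^{(k)}$ has been pinned down by the primal-dual witness construction, sign-consistency is essentially automatic. I would work throughout on the event $\mathcal{C}\cap E_\tau$ used in the proof of Theorem~\ref{thm:bound_on_estimation error}, on which that construction succeeds, and decompose the event $E_{\mathcal{M}}(\hat{\Theta}^{(k)};\Theta_0^{(k_{\max})})$ into two requirements: (i) no false inclusions, $\hat{\Theta}_{ij}^{(k)}=0$ for all $(i,j)\in\mathcal{M}_{k_{\max}}^\perp$; and (ii) no sign flips, $\mathrm{sign}(\hat{\Theta}_{ij}^{(k)})=\mathrm{sign}(\Theta_{0;ij}^{(k_{\max})})$ for all $(i,j)\in\mathcal{M}_{k_{\max}}$.

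Requirement (i) needs nothing beyond what is already established. The restricted oracle $\bar{\Theta}^{(k)}$ is by construction supported on $\mathcal{M}_{k_{\max}}$, and in the proof of Theorem~\ref{thm:bound_on_estimation error} strict dual feasibility (Lemma~\ref{lemma:dual-feasibility}) together with strict convexity of the GFGL objective on the positive-definite cone (Lemma~\ref{lemma:strict_convexity}) forces $\hat{\Theta}^{(k)}=\bar{\Theta}^{(k)}$. Hence $\hat{\Theta}_{ij}^{(k)}=0=\Theta_{0;ij}^{(k_{\max})}$ off the true support, so the signs agree (both vanish) there.

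For requirement (ii) I would invoke the $\ell_\infty$ bound \eqref{eq:precision_error_bound}. For any $(i,j)\in\mathcal{M}_{k_{\max}}$ we have $|\Theta_{0;ij}^{(k_{\max})}|\ge\theta_{\min}^{(k_{\max})}$, so as soon as the right-hand side of \eqref{eq:precision_error_bound} is strictly smaller than $\theta_{\min}^{(k_{\max})}$ we get $|\hat{\Theta}_{ij}^{(k)}-\Theta_{0;ij}^{(k_{\max})}|<|\Theta_{0;ij}^{(k_{\max})}|$, which simultaneously keeps $\hat{\Theta}_{ij}^{(k)}$ nonzero and matches its sign to that of $\Theta_{0;ij}^{(k_{\max})}$. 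It then remains to translate the gap requirement $2K_{\Gamma_{0}}\{1+16\alpha^{-1}(1+2\rho\hat{n}_k^{-1})\}\epsilon<\theta_{\min}^{(k_{\max})}$ into a condition on $T$: since $\epsilon$ may be taken of order $\sqrt{\log(4p^\beta)/(\gamma_{\min}T)}$, this requirement is precisely the extra term $v_\theta$ inside the $\max\{\cdot\}$ of the stated sample-size bound $T\ge 2^9\gamma_{\min}^{-1}\max\{(8c_{\sigma_\infty})^{-1},v_{\mathcal{C}},v_\theta\}^2 c_{\sigma_\infty}^2\log(4p^\beta)$; one checks that this choice of $T$ also preserves, via the $v_{\mathcal{C}}$ and $(8c_{\sigma_\infty})^{-1}$ entries of the $\max$, every condition already imposed in Theorem~\ref{thm:bound_on_estimation error}.

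Combining (i) and (ii), $E_{\mathcal{M}}(\hat{\Theta}^{(k)};\Theta_0^{(k_{\max})})$ holds on $\mathcal{C}\cap E_\tau$, and the probability statement follows from $P(\mathcal{C}^c)\le 1/p^{\beta-2}$ (the high-dimensional covariance error bound, Lemma~\ref{lemma:tail_bound_hd}) conditional on $E_\tau$, giving $P\{E_{\mathcal{M}}(\hat{\Theta}^{(k)};\Theta_0^{(k_{\max})})\}\ge 1-1/p^{\beta-2}$ conditional on the changepoint-consistency event, hence $\ge 1-\{1/p^{\beta-2}+P(E_\tau^c)\}$ unconditionally, which tends to one. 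I do not anticipate a genuine obstacle: the analytic work — the primal-dual witness decomposition, control of $\mathrm{ave}(W^{(k)})$ (Lemma~\ref{lemma:bound_on_sampling_average}), the remainder bound (Lemma~\ref{lemma:bound_on_remainder}) and the $\ell_\infty$ control (Lemma~\ref{lemma:bound_on_infty_error}) — is already done for Theorem~\ref{thm:bound_on_estimation error}. The only mild points to be careful about are using a \emph{strict} inequality in the $\theta_{\min}$ gap (handled by keeping a constant slack in the sample-size constant) and remembering that the comparison is against the maximally overlapping block $k_{\max}$ and that $\hat{n}_k$, not $\hat{n}_{k_{\max}k}$, enters the bounds — both of which are already baked into the definitions of $v_{\mathcal{C}}$ and $v_\theta$.
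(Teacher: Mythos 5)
Your proposal is correct and follows essentially the same route as the paper: the result is obtained as a corollary of Theorem \ref{thm:bound_on_estimation error} by adding the $v_\theta$ term to the sample-size $\max$ so that the $\ell_\infty$ error bound falls below $\theta_{\min}$, which prevents sign flips on $\mathcal{M}_{k_{\max}}$ (the paper cites the analogous Lemma 7 of \citet{Ravikumar2011}), while support exclusion off $\mathcal{M}_{k_{\max}}$ is inherited from the primal-dual witness construction. Your write-up is in fact slightly more explicit than the paper's about the no-false-inclusion step and the conditioning on $E_\tau$, but there is no substantive difference in approach.
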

\begin{proof}
This result is a simple corollary of Theorem \ref{thm:bound_on_estimation error} with the additional condition on a known and appropriately sized $\theta_{\min}$. The proof follows from Lemma \ref{lemma:bound_on_infty_error} where we require $r=r_{\theta} \ge \theta_{\min}/2$. The element-wise error incurred by the estimate $|\hat{\Theta}_{ij}^{(k)}-\Theta^{(k)}_0|$ is not enough to change the sign of the estimate and the result follows. An exact analogy is given (in the static case) by Lemma 7 \citet{Ravikumar2011}.
\end{proof}

\subsection{Dual-feasibility with sampling and mis-specification error (Proof of Lemma \ref{lemma:dual-feasibility})}

\begin{lemma*}{Dual Feasibility}

The out-of-subspace parameters are dual feasible such that $\|\bar{R}_{1;\mathcal{M}^{\perp}}^{(k)}\|_{\infty}<1$
if 
\[
\max\left\{ \|\mathrm{ave}(W)\|_{\infty},\|\mathcal{E}(\Delta)\|_{\infty},\frac{\lambda_{2}}{\hat{n}_{k}}\|\bar{R}_{2}^{(\hat{\tau}_{k-1})}\|_{\infty},\frac{\lambda_{2}}{\hat{n}_{k}}\|\bar{R}_{2}^{(\hat{\tau}_{k})}\|_{\infty}\right\} \le\alpha\lambda_{1}/16\;.
\]

\end{lemma*}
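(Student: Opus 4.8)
The plan is to follow the primal--dual-witness scheme of \citet{Ravikumar2011} (their Lemma~4), the only novelty being that the two smoothing sub-gradients must be carried through the KKT decomposition as extra perturbation terms. First I would take the block-wise stationarity identity (\ref{eq:block_optimality}) written for the oracle solution $\bar\Theta$, divide through by $\hat n_k$, and substitute the first-order expansion $(\bar\Theta^{(k)})^{-1}=\Theta_0^{-1}-\Theta_0^{-1}\Delta\,\Theta_0^{-1}+\mathcal{E}(\Delta)$, which is exactly the definition of the remainder $\mathcal{E}$, with $\Delta=\bar\Theta^{(k)}-\Theta_0^{(k_{\max})}$; the zeroth-order term cancels because $\Sigma_0^{(k)}=(\Theta_0^{(k)})^{-1}$. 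Vectorising, and writing $\Gamma_0$ for the Hessian of the log-determinant loss at $\Theta_0$ (which equals $\Sigma_0^{(k)}\otimes\Sigma_0^{(k)}$ in the Gaussian model), the stationarity condition becomes a linear system of the schematic form
\[
\Gamma_0\,\mathrm{vec}(\Delta)= -\,\mathrm{ave}(W^{(k)})+\mathcal{E}(\Delta)-\lambda_1\bar R_1^{(k)}-\frac{\lambda_2}{\hat n_k}\big(\bar R_2^{(\hat\tau_{k-1})}-\bar R_2^{(\hat\tau_k)}\big),
\]
where I have suppressed the $\mathrm{vec}(\cdot)$'s and the precise signs.

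Next I would split this equation into its $\mathcal{M}$ and $\mathcal{M}^{\perp}$ blocks. Since the oracle is supported on $\mathcal{M}$, i.e.\ $\Delta_{\mathcal{M}^{\perp}}=0$, the $\mathcal{M}$-block reads $\Gamma_{0;\mathcal{M}\mathcal{M}}\,\mathrm{vec}(\Delta_{\mathcal{M}})=(\cdots)_{\mathcal{M}}$, which I invert (the sub-Hessian is non-singular under the bounded-eigenvalue assumption) and substitute into the $\mathcal{M}^{\perp}$-block. This produces an explicit expression for $\lambda_1\bar R_{1;\mathcal{M}^{\perp}}^{(k)}$ as $\Gamma_{0;\mathcal{M}^{\perp}\mathcal{M}}(\Gamma_{0;\mathcal{M}\mathcal{M}})^{-1}$ applied to the $\mathcal{M}$-restricted combination of $\mathrm{ave}(W^{(k)})$, $\mathcal{E}(\Delta)$ and the fused sub-gradients, minus the same combination on $\mathcal{M}^{\perp}$, plus $\lambda_1\,\Gamma_{0;\mathcal{M}^{\perp}\mathcal{M}}(\Gamma_{0;\mathcal{M}\mathcal{M}})^{-1}\bar R_{1;\mathcal{M}}^{(k)}$. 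Taking $\ell_\infty$ norms, using the $\ell_\infty$ operator-norm form of the incoherence condition (Assumption~\ref{ass:incoherence}), namely $\vertiii{\Gamma_{0;\mathcal{M}^{\perp}\mathcal{M}}^{(k)}(\Gamma_{0;\mathcal{M}\mathcal{M}}^{(k)})^{-1}}_{\infty}\le 1-\alpha$, together with $\|\bar R_{1;\mathcal{M}}^{(k)}\|_\infty\le 1$, gives
\[
\|\bar R_{1;\mathcal{M}^{\perp}}^{(k)}\|_\infty\le(1-\alpha)+\frac{2-\alpha}{\lambda_1}\left(\|\mathrm{ave}(W^{(k)})\|_\infty+\|\mathcal{E}(\Delta)\|_\infty+\frac{\lambda_2}{\hat n_k}\|\bar R_2^{(\hat\tau_{k-1})}\|_\infty+\frac{\lambda_2}{\hat n_k}\|\bar R_2^{(\hat\tau_k)}\|_\infty\right).
\]

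Finally I would bound $2-\alpha\le 2$ and invoke the hypothesis that each of the four quantities in the bracket is at most $\alpha\lambda_1/16$, so their sum is at most $\alpha\lambda_1/4$ and the whole second term is at most $\alpha/2$; hence $\|\bar R_{1;\mathcal{M}^{\perp}}^{(k)}\|_\infty\le 1-\alpha/2<1$, which is exactly strict dual feasibility. This also explains the constant: four perturbation terms instead of the two ($W$ and $\mathcal{E}(\Delta)$) of the i.i.d.\ graphical lasso force the ``budget'' $\alpha\lambda_1$ to be split four ways, hence $16$ rather than $8$. I expect the only real obstacle to be the treatment of the smoothing sub-gradients $\bar R_2^{(\hat\tau_{k-1})},\bar R_2^{(\hat\tau_k)}$: unlike $\bar R_1^{(k)}$ they are not supported on $\mathcal{M}$ and they couple block $k$ to its neighbours, so one must check that in the restricted oracle problem they remain legitimate elements of the Frobenius sub-differential (hence $\|\bar R_2\|_\infty\le\|\bar R_2\|_F\le 1$ and they do not affect the sign equations on $\mathcal{M}$), which lets the block elimination treat them as just another additive perturbation analogous to $\mathrm{ave}(W^{(k)})$; everything else is the bookkeeping of \citet{Ravikumar2011}.
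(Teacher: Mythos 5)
Your proposal follows exactly the paper's argument: linearise the block-wise KKT conditions via the remainder $\mathcal{E}(\Delta)$, vectorise with $\Gamma_0$, eliminate $\Delta_{\mathcal{M}}$ using $\Delta_{\mathcal{M}^\perp}=0$, apply the incoherence bound $\vertiii{\Gamma_{0;\mathcal{M}^{\perp}\mathcal{M}}(\Gamma_{0;\mathcal{M}\mathcal{M}})^{-1}}_{\infty}\le 1-\alpha$ together with $\|\bar R_{1;\mathcal{M}}\|_\infty\le1$, and arrive at the same $(1-\alpha)+\lambda_1^{-1}(2-\alpha)(\cdot)$ bound with the budget split four ways to give the $\alpha\lambda_1/16$ threshold. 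This is the paper's proof in all essentials, including your correct identification of the fused sub-gradients as the only new perturbation terms relative to \citet{Ravikumar2011}.
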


\begin{proof}

We can write the block-wise optimality conditions (\ref{eq:block_optimality})
for the restricted estimator as
\begin{align*}
 & (\Theta_{0}^{(k)})^{-1}\Delta^{(k)}(\Theta_{0}^{(k)})^{-1}-\mathcal{E}(\Delta^{(k)})+\frac{1}{\hat{n}_{k}}\left(\sum_{l\ne k}^{\hat{B}}\hat{n}_{lk}W_{l;\hat{n}_{lk}}+\hat{n}_{kk}W_{k;\hat{n}_{kk}}\right)\\
 & \quad+\lambda_{1}\bar{R}_{1}^{(k)}+\frac{\lambda_{2}}{\hat{n}_{k}}(\bar{R}_{2}^{(\hat{\tau}_{k-1})}-\bar{R}_{2}^{(\hat{\tau}_{k})})=0\;.
\end{align*}

As pointed out in \citet{Ravikumar2011}, this equation may be written
as an ordinary linear equation by vectorising the matrices, for instance
$\mathrm{vec}\{(\Theta_{0}^{(k)})^{-1}\Delta^{(k)}(\Theta_{0}^{(k)})^{-1}\}=\{(\Theta_{0}^{(k)})^{-1}\otimes(\Theta_{0}^{(k)})^{-1}\}\mathrm{vec}(\Delta^{(k)})\equiv\Gamma_{0}\mathrm{vec}(\Delta)$.
Utilising the fact $\Delta_{\mathcal{M}^{\perp}}=0$ we can split
the optimality conditions into two blocks of linear equations
\begin{align}
\Gamma_{0;\mathcal{M}\mathcal{M}}^{(k)}\mathrm{vec}(\Delta_{\mathcal{M}}^{(k)})+\mathrm{vec}(G_{\hat{n}_{k}}^{(k)}(X;\lambda_{1},\lambda_{2})_{\mathcal{M}}) & =0\;\label{eq:optimaility_in_model}\\
\Gamma_{0;\mathcal{M}^{\perp}\mathcal{M}}^{(k)}\mathrm{vec}(\Delta_{\mathcal{M}}^{(k)})+\mathrm{vec}(G_{\hat{n}_{k}}^{(k)}(X;\lambda_{1},\lambda_{2})_{\mathcal{M}^{\perp}}) & =0\;,\label{eq:optimality_out_of_model}
\end{align}
where
\[
G(X;\lambda_{1},\lambda_{2}):=\mathrm{ave}(W^{(k)})-\mathcal{E}(\Delta^{(k)})+\lambda_{1}\bar{R}_{1}^{(k)}+\hat{n}_{k}^{-1}\lambda_{2}(\bar{R}_{2}^{(\hat{\tau}_{k-1})}-\bar{R}_{2}^{(\hat{\tau}_{k})})\;,
\]
and the average empirical covariance error over the block is given
by

\[
\mathrm{ave}(W^{(k)}):=\hat{n}_{k}^{-1}(\sum_{l\ne k}^{\hat{B}}\hat{n}_{lk}W_{l;\hat{n}_{lk}}+\hat{n}_{kk}W_{k;\hat{n}_{kk}})\;.
\]
Solving (\ref{eq:optimaility_in_model}) for $\mathrm{vec}(\Delta_{\mathcal{M}}^{(k)})$
we find
\[
\mathrm{vec}(\Delta_{\mathcal{M}}^{(k)})=-(\Gamma_{0;\mathcal{M}\mathcal{M}}^{(k)})^{-1}\mathrm{vec}\{G(X;\lambda_{1},\lambda_{2})_{\mathcal{M}}\}\;.
\]
Substituting this into (\ref{eq:optimality_out_of_model}) and re-arranging
for $\bar{R}_{1;\mathcal{M}^{\perp}}$ gives

\[
\mathrm{vec}(G_{\hat{n}_{k}}^{(k)}(X;\lambda_{1},\lambda_{2})_{\mathcal{M}^{\perp}})=\Gamma_{0;\mathcal{M}^{\perp}\mathcal{M}}^{(k)}(\Gamma_{0;\mathcal{M}\mathcal{M}}^{(k)})^{-1}\mathrm{vec}\{G(X;\lambda_{1},\lambda_{2})_{\mathcal{M}}\}\;,
\]
and thus

\begin{align*}
\bar{R}_{1;\mathcal{M}^{\perp}}^{(k)} & =\frac{1}{\lambda_{1}}H\mathrm{vec}\{\mathrm{ave}(W_{\mathcal{M}})-\mathcal{E}_{\mathcal{M}}(\Delta)\}+\frac{\lambda_{2}}{\hat{n}_{k}\lambda_{1}}H\mathrm{vec}\{(\bar{R}_{2}^{(\hat{\tau}_{k-1})}-\bar{R}_{2}^{(\hat{\tau}_{k})})_{\mathcal{M}}\}+H\mathrm{vec}(\bar{R}_{1;\mathcal{M}}^{(k)})\\
 & \quad-\mathrm{\frac{1}{\lambda_{1}}vec}\{\mathrm{ave}(W_{\mathcal{M}^{\perp}})-\mathcal{E}_{\mathcal{M}^{\perp}}(\Delta^{(k)})\}-\frac{\lambda_{2}}{\hat{n}_{k}\lambda_{1}}\mathrm{vec}\{(\bar{R}_{2}^{(\hat{\tau}_{k-1})}-\bar{R}_{2}^{(\hat{\tau}_{k})})_{\mathcal{M}^{\perp}}\}\;,
\end{align*}
where $H^{(k)}=\Gamma_{0;\mathcal{M}^{\perp}\mathcal{M}}^{(k)}(\Gamma_{0;\mathcal{M}\mathcal{M}}^{(k)})^{-1}$.
Taking the $\ell_{\infty}$ norm of both sides gives
\begin{align*}
\|\bar{R}_{1;\mathcal{M}^{\perp}}^{(k)}\|_{\infty} & \le\frac{1}{\lambda_{1}}\vertiii{H^{(k)}}_{\infty}(\|\mathrm{ave}(W_{\mathcal{M}})\|_{\infty}+\|\mathcal{E}_{\mathcal{M}}(\Delta)\|_{\infty})+\|H^{(k)}\mathrm{vec}(\bar{R}_{1;\mathcal{M}}^{(k)})\|_{\infty}\\
 & \quad+\frac{1}{\lambda_{1}}(\|\mathrm{ave}(W_{\mathcal{M}^{\perp}})\|_{\infty}+\|\mathcal{E}_{\mathcal{M}^{\perp}}(\Delta)\|_{\infty})\\
 & \quad+\frac{\lambda_{2}}{\hat{n}_{k}\lambda_{1}}\left\{ \vertiii{H^{(k)}}_{\infty}(\|\bar{R}_{2;\mathcal{M}}^{(\hat{\tau}_{k-1})}\|_{\infty}+\|\bar{R}_{2;\mathcal{M}}^{(\hat{\tau}_{k})}\|_{\infty})+\|\bar{R}_{2;\mathcal{M}^{\perp}}^{(\hat{\tau}_{k-1})}\|_{\infty}+\|\bar{R}_{2;\mathcal{M}^{\perp}}^{(\hat{\tau}_{k})}\|_{\infty}\right\} \;.
\end{align*}

\begin{lemma}

The error in the model-space dominates that outside such that
\begin{align}
\|\mathrm{ave}(W_{\mathcal{M}^{\perp}})\|_{\infty} & \le\|\mathrm{ave}(W_{\mathcal{M}})\|_{\infty}\;,\\
\|\mathcal{E}_{\mathcal{M}^{\perp}}(\Delta)\|_{\infty} & \le\|\mathcal{E}_{\mathcal{M}}(\Delta)\|_{\infty}\;.
\end{align}
 Furthermore, the maximum size of the sub-gradient in the model subspace
is bounded $\|\bar{R}_{1;\mathcal{M}}^{(k)}\|_{\infty}\le1\;.$

\end{lemma}

Via the results above, we obtain $\|H^{(k)}\mathrm{vec}(\bar{R}_{1;\mathcal{M}}^{(k)})\|_{\infty}\le1-\alpha$
and

\begin{align*}
\|\bar{R}_{1;\mathcal{M}^{\perp}}^{(k)}\|_{\infty} & \le\lambda_{1}^{-1}(2-\alpha)\{\|\mathrm{ave}(W_{\mathcal{M}})\|_{\infty}+\|\mathcal{E}_{\mathcal{M}}(\Delta)\|_{\infty}\\
 & \quad+\lambda_{2}\hat{n}_{k}^{-1}(\|\bar{R}_{2;\mathcal{M}}^{(\hat{\tau}_{k-1})}\|_{\infty}+\|\bar{R}_{2;\mathcal{M}}^{(\hat{\tau}_{k})}\|_{\infty})\}+\|H^{(k)}\mathrm{vec}(\bar{R}_{1;\mathcal{M}}^{(k)})\|_{\infty}\;.
\end{align*}
The condition stated in the lemma now ensures $\|\bar{R}_{1;\mathcal{M}^{\perp}}^{(k)}\|_{\infty}<1$.

\end{proof}

\subsection{Control on the remainder term $\|\mathcal{E}(\Delta)\|_{\infty}$ (Proof of Lemmas \ref{lemma:bound_on_remainder},\ref{lemma:bound_on_infty_error})}

\begin{lemma*}{Lemma 5 \cite{Ravikumar2011}}

If the bound $\|\Delta\|_{\infty}\le(3K_{\Sigma_{0}}d)^{-1}$ holds
and $d$ is the maxmimum node degree, then
\[
\|\mathcal{E}(\Delta)\|_{\infty}\le\frac{3}{2}d\|\Delta\|_{\infty}^{2}K_{\Sigma_{0}}^{3}\;.
\]

\begin{proof}

The reader is directed to \citet{Ravikumar2011} for full details.
The proof relies on representing the remainder term of the log-det
function as
\begin{equation}
\mathcal{E}(\Delta)=\Theta_{0}^{-1}\Delta\Theta_{0}^{-1}\Delta J\Theta_{0}^{-1}\;,\label{eq:remainder_J}
\end{equation}
for matrix 
\[
J:=\sum_{m=0}^{\infty}(-1)^{m}(\Theta_{0}^{-1}\Delta)^{m}\;.
\]
Given the stated control on $\|\Delta\|_{\infty}$ the norm of this
matrix can be bounded such that $\vertiii{J^{\top}}_{\infty}\le3/2$,
the result follows by working through (\ref{eq:remainder_J}) with
a maximum degree size $d$.

\end{proof}

\end{lemma*}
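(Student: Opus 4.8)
The plan is to establish the bound in three stages: an exact series representation of the remainder, a geometric-series control on the auxiliary matrix $J$, and a carefully ordered chain of sub-multiplicative matrix-norm inequalities that keeps \emph{exactly one} factor of the degree $d$. Throughout I write $\Theta_0^{-1}$ for the (block) true covariance, recalling $\vertiii{\Theta_0^{-1}}_\infty\le K_{\Sigma_0}$ by definition, and I use the two elementary estimates $\|AB\|_\infty\le\vertiii{A}_\infty\|B\|_\infty$ and $\|AB\|_\infty\le\|A\|_\infty\vertiii{B^\top}_\infty$, both immediate from $|(AB)_{ij}|\le\sum_k|A_{ik}||B_{kj}|$.

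First I would derive the representation (\ref{eq:remainder_J}) purely algebraically. Writing $\bar{\Theta}=\Theta_0+\Delta=\Theta_0(I+\Theta_0^{-1}\Delta)$ and invoking the Neumann expansion $(I+\Theta_0^{-1}\Delta)^{-1}=\sum_{m\ge0}(-1)^m(\Theta_0^{-1}\Delta)^m$ gives $\bar{\Theta}^{-1}=\sum_{m\ge0}(-1)^m(\Theta_0^{-1}\Delta)^m\Theta_0^{-1}$. Subtracting the $m=0$ term $\Theta_0^{-1}$ and the $m=1$ term $-\Theta_0^{-1}\Delta\Theta_0^{-1}$ leaves $\mathcal{E}(\Delta)=\sum_{m\ge2}(-1)^m(\Theta_0^{-1}\Delta)^m\Theta_0^{-1}$, and factoring $(\Theta_0^{-1}\Delta)^2$ to the left yields $\mathcal{E}(\Delta)=\Theta_0^{-1}\Delta\Theta_0^{-1}\Delta J\Theta_0^{-1}$ with $J=\sum_{m\ge0}(-1)^m(\Theta_0^{-1}\Delta)^m$. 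Convergence of each series requires only $\vertiii{\Theta_0^{-1}\Delta}_\infty<1$, which I verify next. Since $\bar{\Theta}$ and $\Theta_0$ are both supported on $\mathcal{M}$, the difference $\Delta$ has at most $d$ nonzero entries per row, so $\vertiii{\Delta}_\infty\le d\|\Delta\|_\infty$; combined with $\vertiii{\Theta_0^{-1}}_\infty\le K_{\Sigma_0}$ and the hypothesis $\|\Delta\|_\infty\le(3K_{\Sigma_0}d)^{-1}$ this gives $\vertiii{\Theta_0^{-1}\Delta}_\infty\le K_{\Sigma_0}d\|\Delta\|_\infty\le 1/3$. By symmetry of $\Theta_0^{-1}$ and $\Delta$ one has $\vertiii{(\Theta_0^{-1}\Delta)^\top}_\infty=\vertiii{\Delta\Theta_0^{-1}}_\infty\le1/3$ as well, so the geometric bound $\vertiii{J^\top}_\infty\le\sum_{m\ge0}(1/3)^m=3/2$ follows.

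For the final stage I would peel factors off (\ref{eq:remainder_J}) in a specific order. Stripping the leading $\Theta_0^{-1}$, then $\Delta$, then the middle $\Theta_0^{-1}$ with the first inequality gives $\|\Theta_0^{-1}\Delta\Theta_0^{-1}\Delta J\Theta_0^{-1}\|_\infty\le\vertiii{\Theta_0^{-1}}_\infty\,\vertiii{\Delta}_\infty\,\vertiii{\Theta_0^{-1}}_\infty\,\|\Delta J\Theta_0^{-1}\|_\infty$. Then I apply the second inequality to the remaining block, $\|\Delta\,(J\Theta_0^{-1})\|_\infty\le\|\Delta\|_\infty\vertiii{(J\Theta_0^{-1})^\top}_\infty\le\|\Delta\|_\infty\vertiii{\Theta_0^{-1}}_\infty\vertiii{J^\top}_\infty$, where the transpose collapses using $(\Theta_0^{-1})^\top=\Theta_0^{-1}$. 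Collecting the three operator norms $\vertiii{\Theta_0^{-1}}_\infty\le K_{\Sigma_0}$, the single $\vertiii{\Delta}_\infty\le d\|\Delta\|_\infty$, the single max-entry factor $\|\Delta\|_\infty$, and $\vertiii{J^\top}_\infty\le3/2$ produces exactly $\|\mathcal{E}(\Delta)\|_\infty\le\tfrac{3}{2}dK_{\Sigma_0}^3\|\Delta\|_\infty^2$.

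The main obstacle is the bookkeeping in this last stage: the two copies of $\Delta$ must be split between the two different norm inequalities so that precisely one of them incurs the row-count factor $d$ through $\vertiii{\Delta}_\infty$, while the other is measured in the pure max-entry norm $\|\Delta\|_\infty$. Using $\vertiii{\Delta}_\infty$ for both would manufacture a spurious $d^2$, and mishandling the transpose would replace $\vertiii{J^\top}_\infty$ by an $\ell_1$-type column-sum quantity that scales with the ambient dimension $p$. The symmetry of $\Theta_0^{-1}$ and $\Delta$ is exactly what lets the transposed operator norms collapse back to $K_{\Sigma_0}$ and the constant $3/2$, so that no stray $p$-dependence survives.
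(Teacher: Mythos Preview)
Your proof is correct and follows essentially the same approach as the paper's sketch (which defers to \citet{Ravikumar2011}): the Neumann-series representation of $\mathcal{E}(\Delta)$, the geometric control $\vertiii{J^\top}_\infty\le 3/2$ via $\vertiii{\Theta_0^{-1}\Delta}_\infty\le 1/3$, and the mixed use of $\vertiii{\cdot}_\infty$ and $\|\cdot\|_\infty$ sub-multiplicativity so that only one copy of $\Delta$ picks up the row-count factor $d$. You have simply written out in full the details that the paper leaves to the reference, including making explicit the row-sparsity of $\Delta$ (support on $\mathcal{M}$) that underpins the single $d$ factor.
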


\begin{lemma*}{Control of Estimation error}

The elementwise $\ell_{\infty}$ norm of the error is bounded such
that $\|\bar{\Delta}\|_{\infty}=\|\bar{\Theta}-\Theta_{0}\|_{\infty}\le r$
if 
\[
r:=2K_{\Gamma_{0}}\{\|\mathrm{ave}(W_{k})\|_{\infty}+\lambda_{1}+\lambda_{2}\hat{n}_{k}^{-1}(\|\bar{R}_{2}^{(\hat{\tau}_{k-1})}\|_{\infty}+\|\bar{R}_{2}^{(\hat{\tau}_{k})}\|_{\infty})\}\;,
\]
and 
\[
r \le\min\left\{\frac{1}{3K_{\Sigma_{0}}d},\frac{1}{3K_{\Sigma_{0}}^{3}K_{\Gamma_{0}}d}\right\}\;.
\]

\end{lemma*}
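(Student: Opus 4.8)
The plan is to adapt the Brouwer fixed-point (``bridge lemma'') argument of \citet{Ravikumar2011} to the block-wise restricted problem, carrying the two extra group-smoothing sub-gradient terms through the bookkeeping. Fix the estimated block $k$ and write $\Delta := \bar{\Theta}^{(k)} - \Theta_{0}^{(k_{\max})}$ for the oracle (restricted) solution, so $\Delta_{\mathcal{M}^{\perp}}=0$. The in-subspace part of the vectorised block optimality condition $(\ref{eq:optimaility_in_model})$ reads $\Gamma_{0;\mathcal{M}\mathcal{M}}^{(k)}\mathrm{vec}(\Delta_{\mathcal{M}}) = -\mathrm{vec}(G_{\mathcal{M}})$ with $G := \mathrm{ave}(W^{(k)}) - \mathcal{E}(\Delta) + \lambda_{1}\bar{R}_{1}^{(k)} + \hat{n}_{k}^{-1}\lambda_{2}(\bar{R}_{2}^{(\hat{\tau}_{k-1})} - \bar{R}_{2}^{(\hat{\tau}_{k})})$, so a restricted solution is a fixed point of $F(\Delta_{\mathcal{M}}) := -(\Gamma_{0;\mathcal{M}\mathcal{M}}^{(k)})^{-1}\mathrm{vec}(G_{\mathcal{M}})$, where $G$ depends on $\Delta_{\mathcal{M}}$ through $\mathcal{E}(\Delta)$ (recall $\Delta_{\mathcal{M}^{\perp}}=0$) and through the sub-gradient selections at $\Theta_{0}^{(k_{\max})}+\Delta$. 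I would show $F$ maps the closed ball $\mathbb{B}_{\infty}(r) := \{\Delta_{\mathcal{M}} : \|\Delta_{\mathcal{M}}\|_{\infty}\le r\}$ continuously into itself; Brouwer's theorem then produces a fixed point in $\mathbb{B}_{\infty}(r)$, which satisfies the restricted KKT system, and by strict convexity of the restricted objective on the positive-definite cone (Lemma \ref{lemma:strict_convexity}) it coincides with the unique oracle minimiser $\bar{\Delta}_{\mathcal{M}}$. Hence $\|\bar{\Delta}\|_{\infty} = \|\bar{\Delta}_{\mathcal{M}}\|_{\infty}\le r$, which is the claim.

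The self-map estimate is where the radius $r$ gets pinned down, and it is the crux of the argument. For $\Delta_{\mathcal{M}}\in\mathbb{B}_{\infty}(r)$ with $r\le (3K_{\Sigma_{0}}d)^{-1}$ one has $\|\Delta\|_{\infty}\le r\le (3K_{\Sigma_{0}}d)^{-1}$, so Lemma \ref{lemma:bound_on_remainder} applies, giving $\|\mathcal{E}(\Delta)\|_{\infty}\le\tfrac{3}{2}dK_{\Sigma_{0}}^{3}r^{2}$; this hypothesis also forces $\vertiii{\Theta_{0}^{-1}\Delta}_{\infty}\le K_{\Sigma_{0}}d\,r\le 1/3<1$, so $\bar{\Theta}^{(k)}$ is invertible and $F$ is well defined (and continuous) on $\mathbb{B}_{\infty}(r)$. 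Using $\vertiii{(\Gamma_{0;\mathcal{M}\mathcal{M}}^{(k)})^{-1}}_{\infty}\le K_{\Gamma_{0}}$, sub-multiplicativity of $\vertiii{\cdot}_{\infty}$ against $\|\cdot\|_{\infty}$, the crude bound $\|\bar{R}_{1}^{(k)}\|_{\infty}\le 1$, and the very definition of $r$,
\[
\|F(\Delta_{\mathcal{M}})\|_{\infty} \le K_{\Gamma_{0}}\Bigl(\|\mathrm{ave}(W^{(k)})\|_{\infty} + \lambda_{1} + \hat{n}_{k}^{-1}\lambda_{2}(\|\bar{R}_{2}^{(\hat{\tau}_{k-1})}\|_{\infty} + \|\bar{R}_{2}^{(\hat{\tau}_{k})}\|_{\infty})\Bigr) + K_{\Gamma_{0}}\|\mathcal{E}(\Delta)\|_{\infty} \le \frac{r}{2} + \frac{3}{2}K_{\Gamma_{0}}K_{\Sigma_{0}}^{3}d\,r^{2}\;.
\]
This is $\le r$ precisely when $\tfrac{3}{2}K_{\Gamma_{0}}K_{\Sigma_{0}}^{3}d\,r\le\tfrac{1}{2}$, i.e. $r\le (3K_{\Sigma_{0}}^{3}K_{\Gamma_{0}}d)^{-1}$ — the second condition in the statement; combined with $r\le (3K_{\Sigma_{0}}d)^{-1}$ this yields $F(\mathbb{B}_{\infty}(r))\subseteq\mathbb{B}_{\infty}(r)$ and the step is complete.

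I expect the one genuinely delicate point to be the treatment of the $\ell_{1}$ sub-gradient $\bar{R}_{1}^{(k)}$ inside $F$: a continuous selection of $\partial\|\cdot\|_{1,\mathrm{off}}$ over $\mathbb{B}_{\infty}(r)$ is not automatic wherever entries of $\Theta_{0}^{(k_{\max})}+\Delta_{\mathcal{M}}$ vanish, so one must either pass to a set-valued map and invoke Kakutani's theorem, or — following \citet{Ravikumar2011} — observe that only the bound $\|\bar{R}_{1}^{(k)}\|_{\infty}\le 1$ ever enters the estimate and that any fixed point thereby produced is a bona fide KKT point, which suffices once uniqueness (Lemma \ref{lemma:strict_convexity}) is invoked. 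Everything else is a direct transcription of the i.i.d.\ graphical lasso calculation: the averaged error $\mathrm{ave}(W^{(k)})$ plays the role of the single empirical error $W$, and the group-smoothing sub-gradients contribute the additive term $\hat{n}_{k}^{-1}\lambda_{2}(\|\bar{R}_{2}^{(\hat{\tau}_{k-1})}\|_{\infty} + \|\bar{R}_{2}^{(\hat{\tau}_{k})}\|_{\infty})\le 2\hat{n}_{k}^{-1}\lambda_{2}$ (each $\bar{R}_{2}$ lying in the Frobenius unit ball), which is exactly what propagates into the factor $16\alpha^{-1}(1+2\rho\hat{n}_{k}^{-1})$ appearing in Theorem \ref{thm:bound_on_estimation error}.
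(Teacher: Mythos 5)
Your proposal is correct and follows essentially the same route as the paper: the Ravikumar-style Brouwer fixed-point argument, with the self-map estimate splitting into a term bounded by $r/2$ via the definition of $r$ (absorbing $\mathrm{ave}(W^{(k)})$, $\lambda_1\bar R_1^{(k)}$ and the two smoothing sub-gradients) and a term $K_{\Gamma_0}\|\mathcal{E}(\Delta)\|_\infty\le\tfrac{3}{2}dK_{\Sigma_0}^3K_{\Gamma_0}r^2\le r/2$ under the stated cap on $r$, followed by identification of the fixed point with the oracle minimiser. Your explicit flagging of the continuity issue for the $\ell_1$ sub-gradient selection is a point the paper (following Ravikumar et al.) passes over silently, but it does not change the argument.
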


\begin{proof}

Note that $\bar{\Theta}_{\mathcal{M}^{\perp}}=\Theta_{0;\mathcal{M}^{\perp}}=0$
and thus $\|\Delta\|_{\infty}=\|\Delta_{\mathcal{M}}\|_{\infty}$. We
follow \citet{Ravikumar2011} (Lemma 6) in the spirit of our proof.
The first step is to characterise the solution $\bar{\Theta}_{\mathcal{M}}$
in terms of its zero-gradient condition (of the restricted oracle
problem). Define a function to represent the block-wise optimality
conditions (akin to Eq. 75 \citet{Ravikumar2011})
\[
Q(\Theta_{\mathcal{M}}^{(k)})=-(\Theta_{\mathcal{M}}^{(k)})^{-1}+\left(\frac{1}{\hat{n}_{k}}\sum_{t=\hat{\tau}_{k-1}}^{\hat{\tau}_{k}-1}\hat{S}_{\mathcal{M}}^{(t)}\right)+\lambda_{1}\bar{R}_{1}^{(k)}+\frac{\lambda_{2}}{\hat{n}_{k}}(\bar{R}_{2}^{(\hat{\tau}_{k-1})}-\bar{R}_{2}^{(\hat{\tau}_{k})})=0\;.
\]
Now construct a map $F:\Delta_{\mathcal{M}}\mapsto F(\Delta_{\mathcal{M}})$
such that its fixed points are equivalent to the zeros of the gradient
expression in terms of $\Delta_{\mathcal{M}}$. To simplify the analysis,
let us work with the vectorised form and define the map 
\[
F(\mathrm{vec}(\Delta_{\mathcal{M}})):=-(\Gamma_{0;\mathcal{M}\mathcal{M}})^{-1}\mathrm{vec}\{Q(\Theta_{\mathcal{M}}^{(k)})\}+\mathrm{vec}(\Delta_{\mathcal{M}})\;,
\]
such that $F\{\mathrm{vec}(\Delta_{\mathcal{M}})\}=\mathrm{vec}(\Delta_{\mathcal{M}})$
iff $Q(\Theta_{0;\mathcal{M}}^{(k)}+\Delta_{\mathcal{M}})=Q(\Theta_{\mathcal{M}}^{(k)})=0$.

Now, to ensure all solutions that satisfy the zero gradient expression
may have their error bounded within the ball we demonstrate that $F$
maps a $\ell_{\infty}$ ball $\mathbb{B}(r):=\{\Theta_{\mathcal{M}}|\:\|\Theta_{\mathcal{M}}\|_{\infty}\le r\}$
onto itself. Expanding $F(\mathrm{vec}(\Delta_{\mathcal{M}}))$, we
find
\begin{align*}
F(\mathrm{vec}(\Delta_{\mathcal{M}})) & =-(\Gamma_{0;\mathcal{M}\mathcal{M}})^{-1}\mathrm{vec}\{Q(\Theta_{0;\mathcal{M}}^{(k)}+\Delta_{\mathcal{M}})\}+\mathrm{vec}(\Delta_{\mathcal{M}})\\
 & =(\Gamma_{0;\mathcal{M}\mathcal{M}})^{-1}\mathrm{vec}\big[\{(\Theta_{0}^{(k)}+\Delta)^{-1}-(\Theta_{0}^{(k)})^{-1}\}_{\mathcal{M}}-\mathrm{ave}(W_{k;\mathcal{M}})-\lambda_{1}\bar{R}_{1;\mathcal{M}}^{(k)}\\
 & \quad-\lambda_{2}\hat{n}_{k}^{-1}(\bar{R}_{2;\mathcal{M}}^{(\hat{\tau}_{k-1})}-\bar{R}_{2;\mathcal{M}}^{(\hat{\tau}_{k})})\big]+\mathrm{vec}(\Delta_{\mathcal{M}})\\
 & =T_{1}-T_{2}\;,
\end{align*}
where 
\begin{align*}
T_{1} & :=(\Gamma_{0;\mathcal{M}\mathcal{M}})^{-1}\mathrm{vec}\big[\{(\Theta_{0}^{(k)})^{-1}\Delta\}^{2}J(\Theta_{0}^{(k)})^{-1}\big]_{\mathcal{M}}\;,\\
T_{2} & :=(\Gamma_{0;\mathcal{M}\mathcal{M}})^{-1}\mathrm{vec}\big[\mathrm{ave}(W_{k;\mathcal{M}})+\lambda_{1}\bar{R}_{1;\mathcal{M}}^{(k)}+\lambda_{2}\hat{n}_{k}^{-1}(\bar{R}_{2;\mathcal{M}}^{(\hat{\tau}_{k-1})}-\bar{R}_{2;\mathcal{M}}^{(\hat{\tau}_{k})})\big]\;.
\end{align*}
The rest of the proof follows from \citet{Ravikumar2011}, where one
can show
\[
\|T_{1}\|_{\infty}\le\frac{3}{2}dK_{\Sigma_{0}}^{3}K_{\Gamma_{0}}\|\Delta\|_{\infty}^{2}\le\frac{3}{2}dK_{\Sigma_{0}}^{3}K_{\Gamma_{0}}r^{2}\;,
\]
under the assumptions of the lemma we obtain $\|T_{1}\|\le r/2$.
Combined with the stated form of $r$, we also find $\|T_{2}\|_{\infty}\le r/2$
and thus $\|F(\mathrm{vec}(\Delta_{\mathcal{M}}))\|_{\infty}\le r$.
Through the construction of $F$, we have $\|\Delta_{\mathcal{M}}\|_{\infty}\le r$
iff $Q(\Theta_{0;\mathcal{M}}^{(k)}+\Delta_{\mathcal{M}})=Q(\Theta_{\mathcal{M}}^{(k)})=0$
and since $Q(\bar{\Theta}_{\mathcal{M}}^{(k)})=0$ for any $\bar{\Theta}_{\mathcal{M}}^{(k)}$
we obtain $\|\bar{\Delta}_{\mathcal{M}}\|_{\infty}\le r$ where $\bar{\Delta}_{\mathcal{M}}:=\bar{\Theta}-\Theta_{0}$.
Finally, the existence of a solution $\bar{\Theta}_{\mathcal{M}}^{(k)}$
corresponding to $\mathrm{vec}(\bar{\Delta}_{\mathcal{M}})\in\mathbb{B}(r)$
is guranteed by Brouwer's fixed point theorem (cite).

\end{proof}

\subsection{Strict Convexity of GFGL (Proof of Lemma \ref{lemma:strict_convexity})}

\begin{lemma*}{The GFGL cost function is strictly convex}

For matrices $\Theta_{T}\in\mathcal{S}_{++}^{T}:=\{\{A^{(t)}\}_{t=1}^{T}\:|\:A^{(t)}\succ0\:,\:A^{(t)}=A^{(t)\top}\}$
the GFGL cost function is strictly convex.

\end{lemma*}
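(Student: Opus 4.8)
The plan is to decompose the GFGL cost into its constituent terms and observe that strict convexity comes entirely from the log-determinant barriers, which appear separably for each time index. First I would recall the standard fact that $A\mapsto-\log\det(A)$ is strictly convex on the open cone $\mathcal{S}_{++}^p$ of symmetric positive-definite matrices (see e.g.\ the discussion preceding Lemma~3 in \citet{Ravikumar2011}). Concretely, for $A\succ0$ and a nonzero symmetric direction $H$, restricting $g(s):=-\log\det(A+sH)$ to the line and differentiating twice gives $g''(0)=\mathrm{trace}\{(A^{-1}H)^{2}\}=\|A^{-1/2}HA^{-1/2}\|_{F}^{2}>0$, since $A^{-1/2}HA^{-1/2}\neq0$ whenever $H\neq0$; hence $-\log\det$ has positive-definite Hessian on $\mathcal{S}_{++}^p$ and is strictly convex there.

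Next I would record that every remaining term is convex, though not strictly. The loss contribution $\mathrm{trace}(\hat{S}^{(t)}U^{(t)})$ is linear in $U^{(t)}$; the sparsity penalty $\lambda_{1}\sum_{i\ne j}|U^{(t)}_{ij}|$ is a seminorm and hence convex; and the group-fused term $\lambda_{2}\|U^{(t)}-U^{(t-1)}\|_{F}$ is the composition of the (convex) Frobenius norm with the linear map $(U^{(t)},U^{(t-1)})\mapsto U^{(t)}-U^{(t-1)}$, so it too is convex. Finally, the feasible set $\mathcal{S}_{++}^{T}=\prod_{t=1}^{T}\mathcal{S}_{++}^{p}$ is a product of convex cones, hence convex, so convex combinations of feasible points remain feasible.

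Then I would assemble the pieces. Write the GFGL objective as $f(\{U^{(t)}\})=\sum_{t=1}^{T}h_{t}(U^{(t)})+c(\{U^{(t)}\})$, where $h_{t}(U^{(t)}):=-\log\det(U^{(t)})$ is strictly convex and $c$ gathers all the remaining convex terms. Given two distinct feasible sequences $\mathbf{U}=\{U^{(t)}\}\neq\mathbf{V}=\{V^{(t)}\}$ and $\theta\in(0,1)$, there is an index $t_{0}$ with $U^{(t_{0})}\neq V^{(t_{0})}$, so $h_{t_{0}}(\theta U^{(t_{0})}+(1-\theta)V^{(t_{0})})<\theta h_{t_{0}}(U^{(t_{0})})+(1-\theta)h_{t_{0}}(V^{(t_{0})})$ strictly, while for every other $t$ the analogous inequality holds (non-strictly) by convexity of $h_{t}$, and $c(\theta\mathbf{U}+(1-\theta)\mathbf{V})\le\theta c(\mathbf{U})+(1-\theta)c(\mathbf{V})$. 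Summing these, the strict inequality at $t_{0}$ survives, giving $f(\theta\mathbf{U}+(1-\theta)\mathbf{V})<\theta f(\mathbf{U})+(1-\theta)f(\mathbf{V})$, which is strict convexity of the GFGL cost on $\mathcal{S}_{++}^{T}$.

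There is no substantive obstacle here; the only point needing a little care is the final step, namely that joint strict convexity does not require each summand to be strictly convex, only that the strictly convex summands collectively "touch" every coordinate — which they do, since a log-det barrier is present for each $t$. A minor secondary check is that all terms share the common convex domain $\mathcal{S}_{++}^{T}$, so the convex combinations appearing in the definition of strict convexity stay in the domain of $f$.
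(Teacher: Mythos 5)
Your proof is correct, but it takes a genuinely different and more direct route than the paper. You observe that the objective contains a separate $-\log\det(U^{(t)})$ barrier for each $t$, that this barrier is strictly convex on $\mathcal{S}_{++}^{p}$ (via $\mathrm{trace}\{(A^{-1}H)^{2}\}=\|A^{-1/2}HA^{-1/2}\|_{F}^{2}>0$ for symmetric $H\neq0$), and that two distinct feasible tuples must differ in at least one coordinate, so the strict inequality from that coordinate's barrier survives the sum with the remaining (merely convex) trace, $\ell_{1}$ and group-fused terms. That is a complete and valid proof of the stated claim, and it suffices for the use the paper makes of the lemma (uniqueness of any minimiser). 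The paper instead follows the style of Lemma~3 of \citet{Ravikumar2011}: it rewrites GFGL via Lagrangian duality as an explicitly constrained problem and shows that the sparsity and smoothing constraints together imply a per-time-point bound $\|\Theta_{-ii}^{(t)}\|_{1}\le C_{t}(\lambda_{1},\lambda_{2})$, reducing to the static graphical-lasso argument at each $t$. That detour is really aimed at the well-posedness concern raised by the rank-one empirical covariances $\hat{S}^{(t)}=x^{(t)}(x^{(t)})^{\top}$ (boundedness of the effective feasible region, hence attainment of the minimum), which is a separate issue from strict convexity of the objective on the open cone $\mathcal{S}_{++}^{T}$; as a proof of the lemma as literally stated, your argument is the cleaner one, while the paper's buys some additional reassurance about existence of the minimiser that your argument does not address.
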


\begin{proof}

The neagtive log-det barrier $-\log\det(\Theta^{(t)})$ is strictly
convex on $\Theta^{(t)}\in\mathcal{S}_{++}^{1}$. While the frobenius
norm is stictly convex on a given matrix $\|A\|_{F}$ for $A\in\mathcal{S}_{++}^{1}$
it is not striclty convex when considering the mixed norm $\sum_{t=2}^{T}\|\Theta^{(t)}-\Theta^{(t-1)}\|_{F}$
for $\Theta^{(t)}\in\mathcal{S}_{++}^{(T)}$. However, due to Lagrangian
duality, we can re-write the GFGL problem as an explicitly constrained
problem
\begin{align*}
 & \min_{\Theta_{T}\in\mathcal{S}_{++}^{(T)}}\big\{\sum_{t=1}^{T}\big[\langle\Theta^{(t)},\hat{S}^{(t)}\rangle-\log\det(\Theta^{(t)})\big]\big\}\\
\mathrm{such\:that} & \quad\sum_{t=1}^{T}\|\Theta_{-ii}^{(t)}\|_{1}+\frac{\lambda_2}{\lambda_1}\sum_{t=2}^{T}\|\Theta^{(t)}-\Theta^{(t-1)}\|_{F}\le C(\lambda_{1})\;.
\end{align*}
We can alternatively write
\begin{align*}
 & \min_{\Theta_{T}\in\mathcal{S}_{++}^{(T)}}\big\{\sum_{t=1}^{T}\big[\langle\Theta^{(t)},\hat{S}^{(t)}\rangle-\log\det(\Theta^{(t)})\big]\big\}\\
\mathrm{such\:that} & \quad\sum_{t=1}^{T}\|\Theta_{-ii}^{(t)}\|_{1}\le C_{\mathrm{sparse}}(\lambda_{1},\lambda_2)\\
 & \quad\sum_{t=2}^{T}\|\Theta^{(t)}-\Theta^{(t-1)}\|_{F}\le C_{\mathrm{smooth}}(\lambda_{1},\lambda_2)\;.
\end{align*}

A similar argument to that used in \citet{Ravikumar2011} now holds.
Specifically, we note that even the rank one estimate $\hat{S}^{(t)}$
will have positive diagonal entries $\hat{S}_{ii}^{(t)}>0$ for all
$i=1,\ldots,p$. The off-diagonal entries in the precision matrix
are restricted through the $\ell_{1}$ term. Unlike in the standard
static case, the size of this norm is not related to just a single
precision matrix, rather it counts the size of the off-diagonals over
the whole set $\{\Theta^{(t)}\}_{t=1}^T$. Thus, to obtain strict convexity, one also needs to
include appropriate smoothing. To borrow the same argument as used
in \citet{Ravikumar2011}, we need to demonstrate that for any time-point
$t$ we can construct a problem of the form
\begin{align*}
 & \min_{\Theta^{(t)}\in\mathcal{S}_{++}^{(1)}}\big\{\langle\Theta^{(t)},\hat{S}^{(t)}\rangle-\log\det(\Theta^{(t)})\big\}\\
\mathrm{such\:that} & \quad\|\Theta_{-ii}^{(t)}\|_{1}\le C_{t}(\lambda_{1},\lambda_2)\;.
\end{align*}
The constraint due to smoothing allows exactly this, for instance,
one may obtain a bound
\[
\|\Theta_{-ii}^{(t)}\|_{1}\le C_{\mathrm{sparse}}(\lambda_{1},\lambda_2)-\sum_{s\ne t}\|\Theta_{-ii}^{(s)}\|_{1}\;.
\]
Writing $\Theta^{(s)}=\Theta^{(1)}+\sum_{q=2}^{s}(\Theta^{(q)}-\Theta^{(q-1)})$
for $s\ge2$ we obtain
\begin{align*}
\sum_{s\ne t}\|\Theta^{(s)}\|_{1} & \le\|\Theta^{(1)}\|_{1}+\sum_{s\ne t}\sum_{q=1}^{s}\|\Theta^{(q)}-\Theta^{(q-1)}\|_{1}\\
 & \le pC_{\mathrm{smooth}}(\lambda_{1},\lambda_2)+\|\Theta^{(1)}\|_{1}\;,
\end{align*}
where we note $\|\cdot\|_{1}\le p\|\cdot\|_{F}$. Converting to the
bound for the $\ell_{1}$ norm at time $t$ we find
\[
\|\Theta_{-ii}^{(t)}\|_{1}\le C_{\mathrm{sparse}}(\lambda_{1},\lambda_2)-pC_{\mathrm{smooth}}(\lambda_{1},\lambda_2)-\|\Theta^{(1)}\|_{1}\equiv C_{t}(\lambda_{1},\lambda_2)\;,
\]
and thus an effective bound on the time-specific $\ell_{1}$ norm
can be obtained.

\end{proof}

\subsection{Bounding the block-wise sampling error (Proof of Lemma \ref{lemma:bound_on_sampling_average})}

Let $\hat{\mathcal{B}}^{(k)}$ be a set containing all true block
indexes $l$ which overlap with the estimated block of interest $k$
and let $\hat{n}_{lk}=\min\{\min\{\hat{\tau}_{k},\tau_{l+1}\}-\max\{\hat{\tau}_{k-1},\tau_{l}\},0\}$
be the overlap of the $l$th true block and $k$th estimate block.
The average sampling error over block $k$ can then be written as

\[
\mathrm{ave}(W^{(k)}):=\frac{1}{\hat{n}_{k}}\sum_{l\in\mathcal{\hat{B}}^{(k)}}\hat{n}_{lk}(\hat{S}_{l;\hat{n}_{lk}}-\Sigma^{(l)})\;.
\]
Given a lower bound on the true size of blocks $d_{\min}=\min_{k}\{\tau_{k}-\tau_{k-1}\}$
we can bound the average sampling error (even in the case where $\delta_{T}T>d_{\min}$).

\begin{lemma*}
Let $\hat{n}_k:=\hat{\tau}_k-\hat{\tau}_{k-1}$. The sampling error over a block is almost surely bounded according
to

\[
\sum_{l\in\hat{\mathcal{B}}^{(k)}}\hat{n}_{lk}\|W_{l;\hat{n}_{lk}}\|_{\infty}\le\max\{\hat{n}_{k},d_{\min}\}\|W_{l_\infty;d_{\min}/2}\|_{\infty}\;,
\]
\end{lemma*}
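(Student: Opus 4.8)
The plan is to separate a deterministic counting step from a sample-size normalisation step. First I would record that the true changepoints partition the time points of the estimated block $(\hat{\tau}_{k-1},\hat{\tau}_{k}]$, so that the overlap lengths obey the identity $\sum_{l\in\hat{\mathcal{B}}^{(k)}}\hat{n}_{lk}=\hat{n}_{k}$; moreover the true blocks meeting an estimated block form a consecutive run $l_{0},l_{0}+1,\dots,l_{1}$, and every \emph{interior} block $l_{0}<l<l_{1}$ is covered in full, hence has $\hat{n}_{lk}\ge d_{\min}$, so that only the two extreme blocks $l_{0},l_{1}$ can contribute a short overlap. Writing $\hat{n}_{lk}W_{l;\hat{n}_{lk}}=\sum_{t\in\mathcal{I}_{lk}}\{X^{(t)}(X^{(t)})^{\top}-\Sigma_{0}^{(l)}\}$ over the contiguous index set $\mathcal{I}_{lk}$ of the overlap, the left-hand side of the claim becomes $\sum_{l\in\hat{\mathcal{B}}^{(k)}}\|\sum_{t\in\mathcal{I}_{lk}}(\cdot)\|_{\infty}$.

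Next I would bound each summand by a multiple of the canonical term $\|W_{l_\infty;d_{\min}/2}\|_{\infty}:=\max_{l}\|W_{l;d_{\min}/2}\|_{\infty}$, the empirical-covariance error over a window of $\lceil d_{\min}/2\rceil$ samples for the block $l_\infty$ that maximises $\max_{ii}\Sigma_{0;ii}^{(l)}$ over $l$ and therefore, by Lemma~\ref{lemma:tail_bound_hd}, carries the largest tail constant $c_{\sigma_l}$. For blocks with $\hat{n}_{lk}\ge d_{\min}/2$ this is the usual step $\sum_{l}\hat{n}_{lk}\|W_{l;\hat{n}_{lk}}\|_{\infty}\le(\sum_{l}\hat{n}_{lk})\max_{l}\|W_{l;\hat{n}_{lk}}\|_{\infty}$, combined with the fact that a longer averaging window does not increase the tail, so each $\|W_{l;\hat{n}_{lk}}\|_{\infty}$ is dominated (in the stochastic sense in which the bound is subsequently used in the dual-feasibility proof) by $\|W_{l_\infty;d_{\min}/2}\|_{\infty}$; this part contributes at most $\hat{n}_{k}\|W_{l_\infty;d_{\min}/2}\|_{\infty}$. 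For the at most two extreme blocks with $\hat{n}_{lk}<d_{\min}/2$ I would enlarge the overlap window to one of $\lceil d_{\min}/2\rceil$ consecutive time points still contained in the same true block — possible since every true block has at least $d_{\min}$ points — and again control the resulting finite-window error by $\|W_{l_\infty;d_{\min}/2}\|_{\infty}$, so that these terms together cost at most $d_{\min}\|W_{l_\infty;d_{\min}/2}\|_{\infty}$.

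Combining the two parts gives $\sum_{l\in\hat{\mathcal{B}}^{(k)}}\hat{n}_{lk}\|W_{l;\hat{n}_{lk}}\|_{\infty}\le\max\{\hat{n}_{k},d_{\min}\}\,\|W_{l_\infty;d_{\min}/2}\|_{\infty}$, and dividing by $\hat{n}_{k}$ yields the averaged form $\|\mathrm{ave}(W^{(k)})\|_{\infty}\le\max\{1,d_{\min}/\hat{n}_{k}\}\,\|W_{\infty;d_{\min}/2}\|_{\infty}$; under Assumption~\ref{ass:bounded_cp_error} one then has $d_{\min}/\hat{n}_{k}\le d_{\min}/(d_{\min}-\delta_{T}T)\to1$, which is exactly the form in which the lemma is invoked.

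The main obstacle is the short-overlap regime, which is precisely where the $\max\{\hat{n}_{k},d_{\min}\}$ (equivalently the $d_{\min}/\hat{n}_{k}$) factor is born: when the estimated block is shorter than $d_{\min}$ its boundary overlaps carry too few samples for any concentration estimate, so one has to ``round up'' the effective window to the canonical length $\lceil d_{\min}/2\rceil$ by borrowing neighbouring samples from the same true block, and then verify that a bounded number of such $d_{\min}/2$-window errors is genuinely absorbed into $d_{\min}\|W_{l_\infty;d_{\min}/2}\|_{\infty}$ once the tail bound of Lemma~\ref{lemma:tail_bound_hd} is in force. Everything else reduces to the partition identity, the triangle inequality, and the monotonicity of the covariance tail in the window length.
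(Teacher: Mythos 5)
Your overall strategy matches the paper's: split the overlaps into those long enough to be controlled directly and the (at most two) boundary overlaps that are too short, and reduce everything to the empirical-covariance error of a single $d_{\min}/2$-length window drawn from the worst block $l_\infty$. Two points need repair, however.

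First, the step in which you ``enlarge the overlap window'' of a short boundary block to $\lceil d_{\min}/2\rceil$ points cannot be a pointwise inequality: $\hat{n}_{lk}\|W_{l;\hat{n}_{lk}}\|_{\infty}$ is the norm of a partial sum of centred terms, and appending further centred terms does not dominate that partial sum almost surely. The paper's mechanism is different and is the one you must use: compare tail probabilities at rescaled levels, i.e. show $P[\hat{n}_{lk}\|W_{l;\hat{n}_{lk}}\|_{\infty}>\epsilon\, d_{\min}]\le P[\|W_{l;d_{\min}/2}\|_{\infty}>\epsilon]$ directly from the sub-Gaussian exponent, which holds because the exponent scales like $d_{\min}^{2}/\hat{n}_{lk}\ge d_{\min}/2$ whenever $\hat{n}_{lk}\le d_{\min}/2$. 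The same remark applies to your phrase ``a longer averaging window does not increase the tail'': that is a statement about exceedance probabilities, not sample paths. This also exposes that the lemma's ``almost surely'' is a misnomer; both your argument and the paper's in fact establish stochastic domination, which is all that is used downstream. You noticed this, but it should be stated as the actual content of the proof rather than tucked into a parenthesis.

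Second, your final combination does not produce the stated constant: the long-overlap part contributes up to $\hat{n}_{k}\|W_{l_\infty;d_{\min}/2}\|_{\infty}$ and the short-overlap part up to $d_{\min}\|W_{l_\infty;d_{\min}/2}\|_{\infty}$, and their sum is $(\hat{n}_{k}+d_{\min})\|W_{l_\infty;d_{\min}/2}\|_{\infty}$, which is $\max\{\hat{n}_{k},d_{\min}\}\|W_{l_\infty;d_{\min}/2}\|_{\infty}$ only up to a factor of two. The paper avoids this by casing on the whole estimated block ($\hat{n}_{k}\ge d_{\min}$ versus $\hat{n}_{k}<d_{\min}$) and, in the first case, showing that the exceedance probability over all admissible splits $n_{1}+n_{2}=\hat{n}_{k}$ is maximised at the balanced split $n_{1}=n_{2}=\hat{n}_{k}/2\ge d_{\min}/2$, so that tiny boundary overlaps are covered without a separate additive term. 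The factor of two is harmless for the way the lemma is invoked later, but as written your argument proves a weaker inequality than the one stated.
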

\begin{proof}
We consider to bound cases where $\hat{\tau}_{k}-\hat{\tau}_{k-1}<d_{\min}$ differently according to
\[
\sum_{l\in\mathcal{B}^{(k)}}\hat{n}_{lk}\|W_{l}^{(\hat{n}_{lk})}\|_{\infty}\le\begin{cases}
(\hat{\tau}_{k}-\hat{\tau}_{k-1})\|W_{\infty}^{(d_{\min}/2)}\|_{\infty} & \mathrm{if}\;\hat{\tau}_{k}-\hat{\tau}_{k-1}\ge d_{\min}\;(A)\\
d_{\min}\|W_{\infty}^{(d_{\min}/2)}\|_{\infty} & \mathrm{if}\;\hat{\tau}_{k}-\hat{\tau}_{k-1}<d_{\min}\;(B)
\end{cases}
\]

\subsubsection*{Case A}
To show the first bound where $\hat{\tau}_{k}-\hat{\tau}_{k-1}\ge d_{\min}$, consider the simple case where we have two true blocks that overlap the estimated block $k$, namely $l$ and $l+1$. Note that $W_{l}^{(n_{1})}$ is independent of $W_{l+1}^{(n_{2})}$, thus we can write 
\[
P[n_{1}\|W_{l}^{(n_{1})}\|_{\infty}+n_{2}\|W_{l}^{(n_{2})}\|_{\infty}>\epsilon]=P[\|W_{l}^{(n_{1})}\|_{\infty}>\epsilon/2n_{1}]+P[\|W_{l+1}^{(n_{2})}\|_{\infty}>\epsilon/2n_{2}]\;.
\]
If we further assume that the structure in the blocks is the same, i.e. $\Sigma_{0}^{(l)}=\Sigma_{0}^{(l+1)}$ then we can use the bound of Ravikumar et al. (c.f. Lemma \ref{lemma:tail_bound_hd}) to state
\begin{equation}
\label{eq:n1_n2}
P[n_{1}\|W_{l}^{(n_{1})}\|_{\infty}+n_{2}\|W_{l}^{(n_{2})}\|_{\infty}>\epsilon]\propto p^{2}\big(\exp(-c_{l}^{-1}\epsilon^{2}n_{1}^{-1})+\exp(-c_{l}^{-1}\epsilon^{2}n_{2}^{-1})\big)\;.
\end{equation}
Letting $n_{2}=|\hat{\tau}_{k}-\hat{\tau}_{k-1}|-n_{1}$ and differentiating with respect to $n_{1}$ we find that the probability of exceeding $\epsilon$ is maximised at $n_{1}=n_{2}=|\hat{\tau}_{k}-\hat{\tau}_{k-1}|/2$. To arrive at the result (in the two block setting), we simply note that the block $\Sigma_{0}^{(l)}$ that we sample from should be taken to be the one with the largest $c_{\sigma}$.

\subsubsection*{Case B}
In this case $\hat{\tau}_{k}-\hat{\tau}_{k-1}<d_{\min}$, the interval over which the sampling error occurs is relatively small. We desire to find a multiplier in terms of $d_{\min}$ such that we can still maintain 
\begin{equation}
\label{eq:general_bound_small_interval}
\sum_{l\in B^{(k)}}\hat{n}_{lk}\|W_{l;\hat{n}_{lk}}\|_{\infty}\le n_{\mathrm{eff}}\|W_{l_\infty;d_{\min}/2}\|_{\infty}\;.
\end{equation}
In the extreme case where $\hat{n}_{lk}=1$ and $\hat{\tau}_{k}-\hat{\tau}_{k-1}=1$. From Ravikumar, we obtain
\[
P[\|W_{l;1}\|_{\infty}>\epsilon d_{\min}]\le p^{2}\exp(-c_{l}^{-1}\epsilon^{2}d_{\min}^{2})\;,
\]
and
\begin{equation}
\label{eq:limit_bound_small_interval}
P[\|W_{l;d_{\min}/2}\|_{\infty}>\epsilon]\le p^{2}\exp(-2c_{l}^{-1}\epsilon^{2}d_{\min}^{-1})\;,
\end{equation}
thus in high-probability $n_{\mathrm{eff}}=d_{\min}$ maintains the bound (\ref{eq:general_bound_small_interval}). From the bound on the first part (A) we noted that the worst case error was obtained at $n_{1}=n_{2}=\hat{\tau}_{k}-\hat{\tau}_{k-1}/2$. This result still holds in the case where $\hat{\tau}_{k}-\hat{\tau}_{k-1}<d_{\min}$. For all $2n_{1}=1,\ldots,d_{\min}$, we find
\[
P[\|W_{l;n_{1}}\|_{\infty}>\epsilon d_{\min}n_{1}^{-1}]\le p^{2}\exp(-c_{l}^{-1}\epsilon^{2}d_{\min}^{2}n_{1}^{-1})\;.
\]
The probability (\ref{eq:limit_bound_small_interval}) is obtained at the limit $n_{1}=d_{\min}/2$ and therefore the bound holds for all $n_{1},n_{2}<d_{\min}/2$.

\subsubsection*{Multiple Blocks}
For cases with more blocks the result still holds by noting that $d_{\max}>\min_{l\in K^{*}}\{n_{lk}\}$, thus the error from any blocks fully contained in the interval $\{\hat{\tau}_{k-1},\ldots,\hat{\tau}_{k}\}$ will be dominated by the blocks at either end. However, we do need to reassess the size of the boundary blocks $n_{1},n_{2}$. Rather than combining over an interval $|\hat{\tau}_{k}-\hat{\tau}_{k-1}|$ this interval is reduced according to 
\[
n_{1}=n_{2}=\frac{1}{2}\big(|\hat{\tau}_{k}-\hat{\tau}_{k-1}|-\sum_{l\in\tilde{\mathcal{B}}^{(k)}}|\tau_{l}-\tau_{l-1}|\big)\;,
\]
where $\tilde{\mathcal{B}}^{(k)}$ is the set $\mathcal{B}^{(k)}$ without the first and last elements (which represent lengths $n_{1}$ and $n_{2}$ respectively). From assuption we have a lower bound on the changepoint distance $d_{\min}$, we can thus upper bound the size of the boundary regions according to 
\[
n_{1}+n_{2}\le|\hat{\tau}_{k}-\hat{\tau}_{k-1}|-(|\mathcal{B}^{(k)}|-2)d_{\min}\;.
\]
Setting $n_{1}=n_{2}=\{|\hat{\tau}_{k}-\hat{\tau}_{k-1}|-(|\mathcal{B}^{(k)}|-2)d\}/2$, substituting into (\ref{eq:n1_n2}) and then maximising with respect to $d$, suggests that the probability of exceeding level $\epsilon$ is maximised when $d=d_{\min}$. We can thus use this setting of $n_{1},n_{2}$ to upper bound the error. The residual interval length $n_{1}+n_{2}$ either satisfies the two block analysis for $n_{1}+n_{2}<d_{\min}$ or otherwise. The stated result therefore follows in generality.
\end{proof}

\bibliographystyle{plainnat}
\bibliography{ejs_bib_final}

\end{document}